\documentclass[a4paper,reqno]{amsart}

\usepackage{amscd,amsthm,amsmath,amssymb,mathtools}
\usepackage{verbatim}
\usepackage{color}
\usepackage{hyperref}
\usepackage{url}
\usepackage{enumerate,enumitem}
\usepackage{graphicx}
\usepackage{epstopdf,epsfig,subfigure}
\usepackage{curve2e}
\usepackage{array}

\usepackage[numbers,sort&compress]{natbib}


 %
%
%
%
%
%
%


\theoremstyle{plain}
 \newtheorem*{mainresult}{Main Result}

\newtheorem{theorem}{Theorem}[section]
\newtheorem{proposition}[theorem]{Proposition}
\newtheorem{lemma}[theorem]{Lemma}
\newtheorem{corollary}[theorem]{Corollary}

\newtheorem{assumption}[theorem]{Assumption}
\newtheorem{conjecture}[theorem]{Conjecture}

\theoremstyle{definition}
\newtheorem{definition}[theorem]{Definition}
\newtheorem{remark}[theorem]{Remark}

\numberwithin{equation}{section}

\usepackage{geometry}\geometry{left=1.5in,right=1.5in,top=1.35in,bottom=1.5in}





\newcommand{\linspan}{\mathop{\rm span}\nolimits}

\newcommand{\rest}{\left.\kern-2\nulldelimiterspace\right|_}
\newcommand{\norm}[2]{\left|#1\right|_{#2}}
\newcommand{\dnorm}[2]{\left\|#1\right\|_{#2}}

\newcommand{\vol}{\mathop{\rm vol}\nolimits}

\newcommand{\Id}{{\mathbf1}}
\newcommand{\indf}{1}

\newcommand{\ex}{\mathrm{e}}
\newcommand{\p}{\partial}
\newcommand{\ed}{\mathrm d}

\newcommand*{\Bigcdot}{\raisebox{-.25ex}{\scalebox{1.25}{$\cdot$}}}


\newcommand{\clA}{{\mathcal A}}

\newcommand{\clC}{{\mathcal C}}

\newcommand{\clF}{{\mathcal F}}
\newcommand{\clG}{{\mathcal G}}

\newcommand{\clJ}{{\mathcal J}}

\newcommand{\clL}{{\mathcal L}}

\newcommand{\clN}{{\mathcal N}}
\newcommand{\clO}{{\mathcal O}}
\newcommand{\clP}{{\mathcal P}}

\newcommand{\clR}{{\mathcal R}}
\newcommand{\clS}{{\mathcal S}}
\newcommand{\clT}{{\mathcal T}}

\newcommand{\clV}{{\mathcal V}}
\newcommand{\clW}{{\mathcal W}}

\newcommand{\clY}{{\mathcal Y}}
\newcommand{\clZ}{{\mathcal Z}}


\newcommand{\bbN}{{\mathbb N}}

\newcommand{\bbP}{{\mathbb P}}

\newcommand{\bbR}{{\mathbb R}}
\newcommand{\bbS}{{\mathbb S}}


\newcommand{\bfJ}{{\mathbf J}}

\newcommand{\bfZ}{{\mathbf Z}}

\newcommand{\fkA}{{\mathfrak A}}

\newcommand{\fkI}{{\mathfrak I}}

\newcommand{\fkL}{{\mathfrak L}}

\newcommand{\fkN}{{\mathfrak N}}

\newcommand{\fkR}{{\mathfrak R}}
\newcommand{\fkS}{{\mathfrak S}}
\newcommand{\fkT}{{\mathfrak T}}


\newcommand{\rmD}{{\mathrm D}}

%



\newcommand{\bfi}{{\mathbf i}}
\newcommand{\bfj}{{\mathbf j}}
\newcommand{\bfk}{{\mathbf k}}

\newcommand{\bfn}{{\mathbf n}}

\newcommand{\bfz}{{\mathbf z}}


\newcommand{\rmd}{{\mathrm d}}


\newcommand{\fki}{{\mathfrak i}}

\newcommand{\fkm}{{\mathfrak m}}

\newcommand{\fkr}{{\mathfrak r}}
\newcommand{\fks}{{\mathfrak s}}

\newcommand{\fkw}{{\mathfrak w}}

%


\newcommand{\ovlineC}[1]{\overline C_{\left[#1\right]}}

\definecolor{DarkBlue}{rgb}{0,0.08,0.45}
\definecolor{DarkRed}{rgb}{.65,0,0}
\definecolor{applegreen}{rgb}{0.55, 0.71, 0.0}

\newcounter{mymac@matlab}
  \setcounter{mymac@matlab}{0}
\newcommand{\matlab}{MATLAB%
   \ifnum\value{mymac@matlab}<1%
   \textregistered%
   \setcounter{mymac@matlab}{1}%
   \fi%
  }

\newcommand{\black}{ \color{black} }




\usepackage{todonotes}
\setlength{\marginparwidth}{2cm}
\newcommand{\todoautc}[3][]{%
    \ifthenelse{\equal{#1}{}}{\todo[size=\scriptsize]{{\bf#2} #3}{}}{\todo[color=#1,size=\scriptsize]{{\bf#2} #3}{}}%
}

\begin{document}
\title{Semiglobal oblique projection exponential dynamical observers for nonautonomous semilinear parabolic-like equations}
\author{S\'ergio S.~Rodrigues$^{\tt1}$}
\thanks{
\vspace{-1em}\newline\noindent
{\sc MSC2020}: 93C20, 93C50, 93B51, 93E10.
\newline\noindent
{\sc Keywords}: Exponential observer, state estimation, nonautonomous semilinear parabolic equations, finite-dimensional output,
oblique projection output injection, continuous data assimilation
\newline\noindent
$^{\tt1}$ Johann Radon Institute for Computational and Applied Mathematics,
  \"OAW, \newline\indent
  Altenbergerstr. 69, 4040 Linz, Austria.
{\sc Email}:
{\small\tt   sergio.rodrigues@ricam.oeaw.ac.at}
 }

\begin{abstract}
The estimation of the full state of a nonautonomous semilinear parabolic equation is achieved by a Luenberger type dynamical observer.
The estimation is derived from an output given by a finite number of average measurements of the state on small regions.
The state estimate given by the observer converges exponentially to the real state, as time increases. The result is semiglobal
in the sense that the error dynamics can be made stable for an arbitrary given initial condition, provided a large enough number of
measurements, depending on the norm of the initial condition, is taken.
The output injection operator is explicit and involves a suitable oblique projection.
The results of numerical simulations are presented showing the exponential stability of the error dynamics.
\end{abstract}

%

\maketitle

%

\pagestyle{myheadings} \thispagestyle{plain} \markboth{\sc S. S. Rodrigues}
{\sc Semiglobal oblique projection dynamical observer}

\section{Introduction}
We consider evolutionary nonlinear parabolic-like equations, for time~$t\ge0$, as
 \begin{align}\label{sys-y-o} 
 \dot y +Ay+A_{\rm rc}y +\clN(y)=f,\qquad w=\clZ_{S} y,
\end{align} 
evolving in a Hilbert space~$V$. $A$ and~$A_{\rm rc}=A_{\rm rc}(t)$ are, respectively,
a time-independent symmetric linear diffusion-like operator and a
time-dependent linear reaction-convection-like operator. Further~$\clN(y)=\clN(t,y)$ is
a time-dependent nonlinear operator and~$f=f(t)$
is a time-dependent external forcing.
The triple $(A,A_{\rm rc},\clN,f)$, defining the dynamics, is assumed to be known.

The unknown state of the equation is the variable~$y=y(t)\in V$, where~$V$ is a suitable Hilbert space.
The vector output~$w = w (t)=\clZ_{S} y(t)\in\bbR^{S_\sigma\times 1}$
consists of a finite number of measurements, where~$S_\sigma$ is a positive integer. The output
operator~$\clZ_{S}\colon V\to \bbR^{S_\sigma\times 1}$ is linear.

The initial state~$y(0)\in V$, at time~$t=0$, is assumed to be unknown. 
Our task is to estimate the state~$y$ from the output~$w$, which is assumed
to be given in the form of ``averages'' as
\begin{subequations}\label{sens_prop}%
\begin{equation}\label{sens_prop-av}
  w (t)=\begin{bmatrix}
           w_{1} (t) \\  w_{2}(t) \\ \vdots \\ w_{S}(t)
         \end{bmatrix}
,\qquad  w_{i} (t)\coloneqq(\fkw_{i} ,y(t))_{H},\quad 1\le i\le S_\sigma,
\end{equation}
where~$(\Bigcdot,\Bigcdot)_{H}$ is the scalar product in a pivot Hilbert space~$H\supset V$.
Each $\fkw_{i} \in H$ will be referred to as a sensor, and we assume that
\begin{equation}\label{sens_prop-linind}
 \mbox{the family of sensors, }W_{S} \coloneqq\{\fkw_{i} \mid 1\le i\le S_\sigma\}\subset H,
 \mbox{ is linearly independent}.
\end{equation}

We consider the case where we can place the sensors, depending on their number~$S_\sigma$,
so that we will actually have
\begin{equation}\label{sens_prop-loc}
w_{i} =w_{i,S} ,\quad \fkw_{i} (t)=\fkw_{i,S} (t)\subset H,\qquad 1\le i\le S_\sigma,
\end{equation}
\end{subequations}

\begin{remark} For simplicity, we may think of~$S_\sigma = S$. In the application of the
result to concrete examples, it is convenient to have a particular
subsequence~$(S_\sigma )_{S\in \bbN_0}$ of positive integer numbers, as we shall see in
Section~\ref{S:parabolic-OK}, where we shall take~$\sigma(S)=(2S)^d$, for scalar parabolic
equations evolving in
rectangular spatial domains~$\Omega\subset\bbR^d$.
\end{remark}

In real applications, for a fixed instant of time~$t$, it is not possible to recover~$y(t)$
from~$ w (t)$, in general.
However, from the knowledge of
the dynamics of~\eqref{sys-y-o}, it may be possible to construct a
Luenberger type dynamical observer, giving us an estimate~$\widehat y(t)$ of~$y(t)$, so that~$\widehat y(t)$
converges exponentially
to~$y(t)$ as time increases.

Together with the family of sensors we will need also a family of auxiliary functions
\begin{equation}\notag
 \widetilde W_{S} \coloneqq\{\widetilde \fkw_{i}=\widetilde \fkw_{i,S} \mid 1\le i\le S_\sigma\}\subset\rmD(A),
 \mbox{ which is linearly independent},
\end{equation}
where~$\rmD(A)\subset V$ is another Hilbert space, to be precised later on, namely,
as the domain of the diffusion operator~$A$,.
We will also consider the corresponding linear spans
\[
 \clW_{S}\coloneqq\linspan W_{S}\subset H\quad\mbox{and}\quad
 \widetilde \clW_{S}\coloneqq\linspan \widetilde W_{S}\subset\rmD(A).
\]

\begin{remark}
Sometimes, in~\cite{AzouaniOlsonTiti14,OlsonTiti03} this problem of constructing a dynamic state estimate
is referred to as ``continuous data assimilation''. 
\end{remark}

\subsection{The main result}
We shall show that Luenberger observers as
\begin{subequations}\label{sys-haty-o} %
 \begin{align}
 \dot{\widehat y} +A\widehat y+A_{\rm rc}\widehat y+\clN(\widehat y)
 &=f+\fkI_{S}^{[\lambda,\ell]}
 (\clZ\widehat y-w),\quad
 \widehat y(0)=\widehat y_0\in V,\label{sys-haty-o-Dyn}
 \end{align}
 with the output injection operator given by

 \begin{equation}\label{sys-haty-o-Inj}
 \fkI_{S}^{[\lambda,\ell]}\coloneqq-\lambda A^{-1}P_{\clW_{S}}^{\widetilde\clW_{S}^\perp}
 A^\ell P_{\widetilde\clW_{S}}^{\clW_{S}^\perp}\bfZ^{W_s},\qquad \lambda>0,\quad0\le\ell\le2,
 \end{equation}
are able to estimate the state~$y$ of system~\eqref{sys-y-o}, for any given~$\ell\in[0,2]$ and suitable
tuples~$(\lambda,\clW_{S},\widetilde\clW_{S})$. 
Here~$\bfZ^{W_s}\colon\bbR^{S_\sigma}\to\clW_{S}$ is the linear operator defined by
\begin{align}\label{sys-haty-o-bfZ}
\bfZ^{W_S}\bfz 
\coloneqq\sum\limits_{i=1}^{S_\sigma}
\left([\clV_{S}]^{-1} \bfz\right)_i\fkm_{i,S},\qquad \bfz\in\bbR^{S_\sigma},
\end{align}
where~$[\clV_{S}]\in\bbR^{S_\sigma\times S_\sigma}$ is the generalized Vandermonde matrix,
whose entries in the~$i$th row and~$j$th column are
\begin{align}
 [\clV_{S}]_{(i,j)}=(\fkm_{i,S},\fkm_{j,S})_H,
\end{align}
\end{subequations} 
and~$P_F^G$ denotes the oblique projection in~$H$ onto~$F$ along~$G$.

For suitable~$\varrho\ge1$ and $\mu>0$, we will have the inequality
\begin{align}\label{goal} 
 \norm{\widehat y(t)-y(t)}{V}\le \varrho\ex^{-\mu (t-s)}\norm{\widehat y(s)-y(s)}{V},
 \quad\mbox{for all}\quad t\ge s\ge0.
\end{align}

Note that, from~\eqref{sys-y-o} and~\eqref{sys-haty-o}, the error~$z=\widehat y-y$ satisfies%
\begin{subequations}\label{sys-z-o} 
\begin{align}
 &\dot{z} +Az+A_{\rm rc}z +\fkN_{y}(z)=\fkI_{S}^{[\lambda,\ell]}\clZ z,\quad z(0)=z_0\in V.\label{sys-z-o-dyn}
\intertext{where}
&\fkN_{y}(z)\coloneqq\clN(y+z)-\clN(y)=\clN(\widehat y)-\clN(y),
\end{align}
\end{subequations}
and~$z_0=\widehat y_0-y(0)$.
Our goal~\eqref{goal},  reads now
\begin{align}\label{goal-dif} 
 \norm{z(t)}{V}\le \varrho\ex^{-\mu (t-s)}\norm{z(s)}{V},\quad\mbox{for all}\quad t\ge s\ge0.
\end{align}

\begin{remark}
Observe that~$z_0$ in~\eqref{sys-z-o} is unknown for us, because so is~$y(0)$. On the other hand,
the choice of~$\widehat y_0=\widehat y(0)$ is at our disposal,
for example, we can choose~$\widehat y_0$ as an initial guess we might have for~$y(0)$.
\end{remark}

\begin{remark}
We can see that in~\eqref{sys-haty-o-Inj} we have that~$\fkI_{S}^{[\lambda,\ell]}\clZ z
=-\lambda A^{-1}P_{\clW_{S}}^{\widetilde\clW_{S}^\perp}
 A^\ell q$ with~$q\in\widetilde\clW_S\subset\rmD(A)$. Hence, if~$\ell>1$ we may have
 that~$p\coloneqq A^\ell q\in\rmD(A^{1-\ell})\setminus \rmD(A^{0})$.
 Therefore, it is clear that in the case~$p\notin \rmD(A^{0})=H$ we cannot
 see~$P_{\clW_{S}}^{\widetilde\clW_{S}^\perp}p$ as an oblique
 projection in~$H$, but rather as an extension of such oblique projection to~$\rmD(A^{1-\ell})$.
 Such extensions are well defined, as we shall see later in Proposition~\ref{P:extOblPro}.
\end{remark}

Omitting the details at this point, the main result of this paper is as follows.
\begin{mainresult}
 Under general conditions on the tuple~$(\widehat y,A,A_{\rm rc},\clN,y)$ and under particular conditions
 on the tuple~$(\widetilde W_{S},W_{S} )$ it holds the following.
 For any given~$\ell\in[0,2]$, $R>0$, $\varrho>1$, and~$\mu>0$, there are large
 enough~$S\in\bbN_0$ and~$\lambda>0$ such that: for all
 initial error~$z_0$ satisfying~$\norm{z_0}{V}\le R$, it follows that the
 corresponding solution of~\eqref{sys-z-o},
 with the output injection operator as in~\eqref{sys-haty-o-Inj}, satisfies~\eqref{goal-dif}.
 \end{mainresult}

\begin{definition}
 If~\eqref{goal-dif} holds true, we say that the error dynamics is exponential stable
 with rate~$-\mu<0$ and transient bound~$\varrho\ge1$.
\end{definition}

Note that Main Result says that we can stabilize the nonlinear error
dynamics for {\em arbitrary large initial errors}~$\norm{z_0}{V}\le R$, 
with an {\em arbitrary small exponential rate}~$-\mu<0$, and
{\em arbitrary small transient bound}~$\varrho>1$. For that,
we simply have to take a large enough number of suitable sensors~$S_\sigma$ and
a large enough~$\lambda$. In general, the ``optimal'' transient bound~$\varrho=1$
cannot be taken in Main Result. However, later on,
in Section~\ref{S:finalremks} we shall
give classes of systems where we
can indeed take~$\varrho=1$. Such classes include linear and suitable semilinear systems.
The case~$\varrho=1$
is interesting simply because it means that the error norm is {\em strictly} decreasing. 
Observe also that~$\varrho=1$ is the smallest value possible for~$\varrho$ in~\eqref{goal-dif}
(e.g., by taking~$t=s$).

In the particular case our system is linear, $\clN=0$,
then it is not difficult to show that the observer proposed here is a {\em global} observer.
By a global observer we understand that the output injection
operator~$\fkI_{S}^{[\lambda,\ell]}$ can be taken independent of the norm of the initial error.
The observer proposed here is different from the
one proposed in~\cite{Rod20-sicon,Rod-20-CL},
hence this manuscript also contributes with a new result to the linear case.

\subsection{Motivation}\label{sS:motivation}
Observers are demanded in applications, for example, in the implementation of output based stabilizing controls.
Suppose we have a feedback operator~$K(t)$ such that the associated feedback control~$f(t)=K(t)y(t)$
stabilizes system~\eqref{sys-y-o}. See~\cite{Rod20-eect} for such stabilizing feedback control.
In the case where the state~$y$ is modeled by partial differential equations, the state
is infinite-dimensional and it is not realistic to expect that
we will be able to know/measure the entire state~$y(t)$ at each instant of time~$t$.
However, we can expect that, with a good enough
estimate~$\widehat y(t)$ for~$y(t)$, the
approximated control~$f(t)=K(t)\widehat y(t)$ will be able to stabilize~\eqref{sys-y-o}.

We cannot expect
that an infinite-dimensional state~$y(t)$ can be reconstructed from the
finite set~$w (t)=\clZ_{S}y(t)$ at a fixed time~$t$, hence we look for
a {\em dynamical} observer in order to construct an estimate~$\widehat y(t)$ for~$y(t)$, 
which will be improving as time increases.

\subsection{On previous related works in literature}

For partial differential equations, the results in the literature on state estimation concern mainly the
autonomous case. For example, we refer
to~\cite{AliGiriKrsticLagarrBurl16,FengGuo17,JadachowskiMeurerKugi11,
KangFridman19,RamdaniTucsnakValein16,BuchotRaymondTiago15,Fujii80,OrlovPisaPillUsai17,ZhangWu20}. 
Exceptions are~\cite{MeurerKugi09,Meurer13,JadachowskiMeurerKugi13} for one-dimensional
parabolic equations, $d=1$, by using the
nontrivial backstepping and Cole-Hopf transformations. In~\cite{Meurer13} reaction type
Lipschitz nonlinearities are considered, while in~\cite{JadachowskiMeurerKugi13} convection
nonlinearities are also included, where some details are
omitted concerning the stability of the semilinear error dynamics, as also referred by
the authors in~\cite[Sect.~C]{JadachowskiMeurerKugi13}.
See also the auxiliary nonautonomous heat equation in~\cite[see Eq.~(17)]{MeurerKugi09}.

In the investigation of the autonomous case, as in~\cite{RamdaniTucsnakValein16},
the spectral properties of the time-independent operator dynamics play a crucial
role in the derivation of the results. The (un)stability
results in~\cite{Wu74} suggest that such spectral properties in the nonautonomous
case (at each fixed time~$t>0$)
are not an appropriate tool to deal with the nonautonomous case. 
The recent work~\cite{AstrovskiiGaishun19} also shows that, in general, the state
estimation problem in the nonautonomous
case is not an easy task even for the case of finite-dimensional systems.

The approach in~\cite{AzouaniOlsonTiti14} is applicable to state estimation of parabolic-like systems
for which we can derive the existence of a finite set of so-called determining parameters. This includes the {2D}
Navier--Stokes equations, whose weak solutions are well posed and are exponentially stable under
the absence of external forces, i.e., when $f=0$.
The method in~\cite{AzouaniOlsonTiti14} is quite interesting because, depending on the nature of the ``chosen''
determining parameters, it can be applied to several types of measurements,
including average-like measurements as we are particularly
interested in. However, in this manuscript we consider a class of nonlinear equations whose free dynamics evolution is not well posed
in the sense of weak solutions (for initial states given in the pivot Hilbert space~$H$).
We will need strong solutions (for ``more regular'' initial states given in the Hilbert space~$V\subset H$),
but even for such solutions
the free dynamics evolution will be well posed only for short time, that is, in general the free dynamics has
strong solutions which blow up in finite time. Hence, to deal with state estimation for such class of systems,
the method in~\cite{AzouaniOlsonTiti14}
is (or, seems to be) not appropriate.

In~\cite{Rod20-sicon} a global observer was presented to estimate
the state of linear parabolic equations, where the placement of the actuators play an important role.
The results in this manuscript are also derived under the assumption that we are allowed
to suitably place the sensors.
Such assumption seems to be natural and to reflect common sense: it matters (or, may matter)
where we take our measurements in. Again, the observer in~\cite{Rod20-sicon}  provides
an estimate for the weak solution and the exponential convergence is derived in the pivot
$H$~norm. As we said above, weak solutions do not necessarily exist for the
class of nonlinear systems we consider, this is one reason we will (need to) use a different output
injection operator in this manuscript, to deal with strong solutions and derive the exponential convergence in the
stronger $V$~norm.

Finally, we must say that some of the above mentioned works, as~\cite{KangFridman19,BuchotRaymondTiago15},
do not consider the observer design problem alone,
but (already) coupled with a stabilization problem (output based feedback control). Also, some
of the above works deal with boundary measurements, while here we deal with internal measurements.

\subsection{Illustrating example. Scalar parabolic equations}\label{sS:illust-parabolic}

The results will follow under general assumptions on the plant dynamics
operators, on the external force, and on the targeted real state. We shall need also a particular assumption
involving the set of sensors. Such assumptions will be presented later on and will be satisfied,
in particular, for a general class of
semilinear parabolic equations, under either Dirichlet or Neumann boundary conditions, including
\begin{subequations}\label{sys-y-o-parab} 
\begin{align} 
 &\tfrac{\p}{\p t} y +(-\Delta+\Id) y+ay +b\cdot\nabla y
 +\widetilde a\norm{y}{\bbR}^{r-1}y +(\widetilde b\cdot\nabla y)\norm{y}{\bbR}^{s-1}y=f,\\
 &\clG y\rest{\Gamma}=g,\qquad w =\clZ_{S} y,
 \end{align}
 with~$r\in(1,5)$ and~$s\in[1,2)$, 
\end{subequations}
defined in a bounded connected open spatial subset~$\Omega\in\bbR^d$, $d\in\{1,2,3\}$, with
boundary~$\Gamma=\p\Omega$. $\Omega$ is assumed to be either
smooth or a convex polygon. The state
is a function $y=y(x,t)$, defined for~$(x,t)\in\Omega\times(0,+\infty)$. The operator~$\clG$
imposes the boundary conditions,
\begin{align}
  \clG &=\Id,\quad\mbox{for Dirichlet boundary conditions},\notag\\
  \clG &=\bfn\cdot\nabla=\tfrac{\p}{\p\bfn},\quad\mbox{for Neumann boundary conditions,}\notag
\end{align}
where~$\bfn=\bfn(\bar x)$ stands for the outward unit normal vector to~$\Gamma$, at~$\bar x\in\Gamma$.

The functions~$a=a(x,t)$, $b=b(x,t)$, $\widetilde a=\widetilde a(x,t)$, $\widetilde b=\widetilde b(x,t)$,
and~$f=f(x,t)$
are defined in~$\Omega\times(0,+\infty)$, and the function~$g=g(\bar x,t)$ is defined
for~$(\bar x,t)\in\Gamma\times(0,+\infty)$. Thus
the data tuple~$(a,b,\widetilde a,\widetilde b,f,g)$ is allowed to depend on both space and time variables.
We assume that,
\begin{subequations}\label{assum.abf.parab}
\begin{align}
 &\mbox{$a$ and $\widetilde a$ are in~$L^\infty(\Omega\times(0,+\infty))$},\label{assum.abf.parab-a}\\
 &\mbox{$b$ and~$\widetilde b$ are in~$L^\infty(\Omega\times(0,+\infty))^d$},\label{assum.abf.parab-b}\\
 &\mbox{There exists }\tau_y>0\mbox{ such that }
 \sup_{t\ge0}\left(\norm{y(t)}{H^1(\Omega)}+\norm{y}{L^2((t,t+\tau_y),H^2(\Omega))}\right)<+\infty.
 \label{assum.abf.parab-y}
\end{align}

\end{subequations}

\begin{remark}\label{R:assumpt-fgy}
In~\eqref{assum.abf.parab-y} we assume, in particular, that the real state~$y$ must be a globally defined strong
solution~$y\in \clY\coloneqq L^\infty_{\rm loc}((0,+\infty),H^1(\Omega))
\bigcap L^2_{\rm loc}((0,+\infty),H^2(\Omega))$.
In general, for regular enough external force~$f$ (e.g., for~$f=0$)
we will only have the local existence in time:
for a suitable~$\tau_*>0$, $y\in L^\infty((0,\tau),H^1(\Omega))
\bigcap L^2((0,\tau),H^2(\Omega))$, for~$\tau<\tau_*$. There are, however,
cases where~\eqref{assum.abf.parab-y} will hold true, for example, for the case where
$g=0$ and~$f=Ky$ is a stabilizing feedback control. See Section~\ref{sS:motivation}.
Another example is the case
of time-periodic systems having time-periodic solutions. A third example are Lyapunov
stable (not necessarily asymptotic stable) systems.
\end{remark}

As output we take the averages of the solution in subdomains~$\omega_{i}=\omega_{i,S}\subset\Omega$, as
\begin{equation}\label{output-parab}
  w_{i} (t)=(\indf_{\omega_{i}},y(\Bigcdot,t))_{L^2(\Omega)}=\int_{\omega_{i}}y(x,t)\,\ed x,
  \qquad 1\le i\le S_\sigma.
\end{equation}
We will be interested in the case the regions~$\omega_{i}$, where we take the measurements in,
are constrained to cover an a priori fixed volume, namely,
$\vol(\bigcup_{i=1}^{S_\sigma}\omega_{i,S})\le r\vol(\Omega)$ with~$0<r<1$ independent of~$S$.
In other words, we allow ourselves to
take/place as many sensors as we want/need, but we are allowed to perform
measurements only in (at most) a fixed percentage of the spatial
domain~$\Omega$, namely,~$100r\%$.

\begin{remark}
The usual average over~$\omega_{i}$ 
is~$\breve{w}_{i}\coloneqq\frac{\int_{\omega_{i}}y(x,t)\,\ed x}{\int_{\omega_{i}}\,\ed x}$. However, 
we assume that we know our sensors, that is, we know the regions~$\omega_{i}$ where we take the measurements
in. In this case, knowing/measuring~$\breve{w}_{i}$ is equivalent to knowing/measuring~${w}_{i}$. 
\end{remark}

In order to apply our results to system~\eqref{sys-y-o-parab}, we have just to
rewrite~\eqref{sys-y-o-parab} as an evolutionary equation~\eqref{sys-y-o}.
To this purpose, 
we define for both Dirichlet, respectively
 Neumann, boundary conditions the spaces
\begin{align}
\rmD(A)=H^2_\clG(\Omega)&\coloneqq\{h\in  H^2(\Omega)\mid \clG h\rest\Gamma=0\},
\quad\mbox{for}\quad\clG\in\{\Id,\tfrac{\p}{\p\bfn}\},\notag
\intertext{and}
V=H^1_\Id(\Omega)&\coloneqq H^1_0(\Omega)=\{h\in  H^1(\Omega)\mid h\rest\Gamma=0\},\qquad
H^1_{\frac{\p}{\p\bfn}}(\Omega)\coloneqq H^1(\Omega),\notag
\end{align}
with the operators
\begin{align}
 A\coloneqq-\nu\Delta+\Id,\quad A_{\rm rc}\coloneqq a\Id  +b\cdot\nabla,\quad\mbox{and}\quad
 \clN(t,y)\coloneqq\widetilde a(t)\norm{y}{\bbR}^{r-1} y
 +(\widetilde b(t)\cdot\nabla y)\norm{ y}{\bbR}^{s-1} y.\notag
\end{align}
Then, we just
construct the Luenberger observer as in~\eqref{sys-haty-o} and apply the Main Result.

\subsection{Contents and notation}
In Section~\ref{S:assumptions} we present the assumptions we require for the
dynamics plant operators and for all the ``parameters'' involved in
the output injection operator. 
In Section~\ref{S:exp_error} we prove that under such assumptions the error
of the observer estimate decreases
exponentially to zero. In Section~\ref{S:parabolic-OK}
we show that the required assumptions are satisfiable for standard parabolic equations evolving
in rectangular domains. In Section~\ref{S:simul}
we present the results of numerical simulations showing the exponential stability of the error
dynamics, for a rectangular domain, namely the unit square.
In Section~\ref{S:finalremks}
we comment on the derived results.
Finally, the Appendix gathers the proofs of auxiliary results
needed to derive the main result.

\smallskip

Concerning the notation, we write~$\bbR$ and~$\bbN$ for the sets of real numbers and nonnegative
integers, respectively, and we set $\bbR_r\coloneqq(r,+\infty)$, $r\in\bbR$,
and~$\bbN_0\coloneqq\bbN\setminus\{0\}$.

Given two Banach spaces~$X$ and~$Y$, if the inclusion
$X\subseteq Y$ is continuous, we write $X\xhookrightarrow{} Y$. We write
$X\xhookrightarrow{\rm d} Y$, respectively $X\xhookrightarrow{\rm c} Y$,
if the inclusion is also dense, respectively compact.

Let $X\subseteq Z$ and~$Y\subseteq Z$ be continuous inclusions, where~$Z$
is a Hausdorff topological space.
Then we can define the Banach spaces $X\times Y$, $X\cap Y$, and $X+Y$,
endowed with the  norms 
$|(h,g)|_{X\times Y}:=\bigl(|h|_{X}^2+|g|_{Y}^2\bigr)^{\frac{1}{2}}$,
$|\hat h|_{X\cap Y}:=|(\hat h,\hat h)|_{X\times Y}$, and
$|\tilde h|_{X+Y}:=\inf\limits_{(h,g)\in X\times Y}\bigl\{|(h,g)|_{X\times Y}\mid \tilde h=h+g\bigr\}$,
respectively.
In case we know that $X\cap Y=\{0\}$, we say that $X+Y$ is a direct sum and we write $X\oplus Y$ instead.

The space of continuous linear mappings from~$X$ into~$Y$ is denoted by~$\clL(X,Y)$. In case~$X=Y$ we 
write~$\clL(X)\coloneqq\clL(X,X)$.
The continuous dual of~$X$ is denoted~$X'\coloneqq\clL(X,\bbR)$.
The adjoint of an operator $L\in\clL(X,Y)$ will be denoted $L^*\in\clL(Y',X')$.

The space of continuous functions from~$X$ into~$Y$ is denoted by~$\clC(X,Y)$.
The space of real valued
increasing functions, defined in~$\overline{\bbR_0}$ and vanishing at~$0$ is denoted by:
\begin{equation}\notag
 \clC_{0,\iota}(\overline{\bbR_0},\bbR) \coloneqq \{\fki\in \clC(\overline{\bbR_0},\bbR)\mid
 \; \mathfrak i(0)=0,\quad\!\!\mbox{and}\quad\!\!
 \mathfrak i(\varkappa_2)\ge\mathfrak i(\varkappa_1)\;\mbox{ if }\; \varkappa_2\ge \varkappa_1\ge0\}.
\end{equation}

We also denote the vector subspace~$\clC_{\rm b, \iota}(X, Y)\subset \clC(X,Y)$ by  
\begin{equation}\notag
 \clC_{\rm b, \iota}(X, Y)\coloneqq
 \left\{f\in \clC(X,Y) \mid \exists\mathfrak i\in \clC_{0,\iota}(\overline{\bbR_0},\bbR)\;\forall x\in X:\;
\norm{f(x)}{Y}\le \mathfrak i (\norm{x}{X})
\right\}.
\end{equation}

The orthogonal complement to a given subset~$B\subset H$ of a Hilbert space~$H$,
with scalar product~$(\Bigcdot,\Bigcdot)_H$,  is 
denoted~$B^\perp\coloneqq\{h\in H\mid (h,s)_H=0\mbox{ for all }s\in B\}$.

Given two closed subspaces~$F\subseteq H$ and~$G\subseteq H$ of the
Hilbert space~$H=F\oplus G$, we denote by~$P_F^G\in\clL(H,F)$
the oblique projection in~$H$ onto~$F$ along~$G$. That is, writing $h\in H$ as $h=h_F+h_G$
with~$(h_F,h_G)\in F\times G$, we have~$P_F^Gh\coloneqq h_F$.
The orthogonal projection in~$H$ onto~$F$ is denoted by~$P_F\in\clL(H,F)$.
Notice that~$P_F= P_F^{F^\perp}$.

Given a sequence~$(a_j)_{j\in\{1,2,\dots,n\}}$ of real nonnegative constants, $n\in\bbN_0$, $a_i\ge0$, we
denote~$\|a\|\coloneqq\max\limits_{1\le j\le n} a_j$.

By
$\overline C_{\left[a_1,\dots,a_n\right]}$ we denote a nonnegative function that
increases in each of its nonnegative arguments~$a_i$, $1\le i\le n$.

Finally, $C,\,C_i$, $i=0,\,1,\,\dots$, stand for unessential positive constants.

\section{Assumptions}\label{S:assumptions}

The results will follow under general assumptions on the plant dynamics
operators~$A$, $A_{\rm rc}$, $\clN$, and on our targeted real state~$y$. We will also need a particular assumption
on the triple~$(\clW_{S}, \widetilde\clW_{S},\lambda)$.

The Hilbert space~$H$, in which system~\eqref{sys-z-o} is evolving in,
will be set as a pivot space, that is, we identify,~$H'=H$.
Let~$V$ be another Hilbert space
with~$V\subset H$.
\begin{assumption}\label{A:A0sp}
 $A\in\clL(V,V')$ is symmetric and $(y,z)\mapsto\langle Ay,z\rangle_{V',V}$ is a complete scalar product in~$V.$
\end{assumption}

From now on, we suppose that~$V$ is endowed with the scalar product~$(y,z)_V\coloneqq\langle Ay,z\rangle_{V',V}$,
which still makes~$V$ a Hilbert space.
Necessarily, $A\colon V\to V'$ is an isometry.
\begin{assumption}\label{A:A0cdc}
The inclusion $V\subseteq H$ is dense, continuous, and compact.
\end{assumption}

Necessarily, we have that
\[
 \langle y,z\rangle_{V',V}=(y,z)_{H},\quad\mbox{for all }(y,z)\in H\times V,
\]
and also that the operator $A$ is densely defined in~$H$, with domain $\rmD(A)$ satisfying
\[
\rmD(A)\xhookrightarrow{\rm d,\,c} V\xhookrightarrow{\rm d,\,c} H\xhookrightarrow{\rm d,\,c} V'
\xhookrightarrow{\rm d,\,c}\rmD(A)'.
\]
Further,~$A$ has a compact inverse~$A^{-1}\colon H\to \rmD(A)$,
and we can find a nondecreasing
system of (repeated accordingly to their multiplicity) eigenvalues
$(\alpha_n)_{n\in\bbN_0}$ and a corresponding complete basis of
eigenfunctions $(e_n)_{n\in\bbN_0}$:
\begin{equation}\notag
0<\alpha_1\le\alpha_2\le\dots\le\alpha_n\le\alpha_{n+1}\to+\infty \quad\mbox{and}\quad Ae_n=\alpha_n e_n.
\end{equation}

We can define, for every $\xi\in\bbR$, the fractional powers~$A^\xi$, of $A$, by
\[
 y=\sum_{n=1}^{+\infty}y_ne_n,\quad A^\xi y=A^\xi \sum_{n=1}^{+\infty}y_ne_n
 \coloneqq\sum_{n=1}^{+\infty}\alpha_n^\xi y_n e_n,
\]
and the corresponding domains~$\rmD(A^{|\xi|})\coloneqq\{y\in H\mid A^{|\xi|} y\in H\}$, and
$\rmD(A^{-|\xi|})\coloneqq \rmD(A^{|\xi|})'$.
We have that~$\rmD(A^{\xi})\xhookrightarrow{\rm d,\,c}\rmD(A^{\zeta_1})$, for all $\xi>\xi_1$,
and we can see that~$\rmD(A^{0})=H$, $\rmD(A^{1})=\rmD(A)$, $\rmD(A^{\frac{1}{2}})=V$.

For the time-dependent operator and external forcing we assume the following:

\begin{assumption}\label{A:A1}
For almost every~$t>0$ we have~$A_{\rm rc}(t)\in\clL(V, H)$,
and we have a uniform bound as $\norm{A_{\rm rc}}{L^\infty(\bbR_0,\clL(V, H))}\eqqcolon C_{\rm rc}<+\infty.$
\end{assumption}

\begin{assumption}\label{A:N}
 We have~$\clN(t,\Bigcdot)\in\clC_{\rm b,\iota}(\rmD(A),H)$ 
and
there exist constants $C_\clN\ge 0$, $n\in\bbN_0$, 
$\zeta_{1j}\ge0$, $\zeta_{2j}\ge0$,
 $\delta_{1j}\ge 0$, ~$\delta_{2j}\ge 0$, 
 with~$j\in\{1,2,\dots,n\}$, such that
 for  all~$t>0$ and
 all~$(y_1,y_2)\in \rmD(A)\times \rmD(A)$, we have
\begin{align}
&\norm{\clN(t,y_1)-\clN(t,y_2)}{H}\le C_\clN\textstyle\sum\limits_{j=1}^{n}
  \left( \norm{y_1}{V}^{\zeta_{1j}}\norm{y_1}{\rmD(A)}^{\zeta_{2j}}+\norm{y_2}{V}^{\zeta_{1j}}
  \norm{y_2}{\rmD(A)}^{\zeta_{2j}}\right)
   \norm{d}{V}^{\delta_{1j}}\norm{d}{\rmD(A)}^{\delta_{2j}},\notag
\end{align}
with~$d\coloneqq y_1-y_2$, $\zeta_{2j}+\delta_{2j}<1$ and~$\delta_{1j}+\delta_{2j}\ge1$.
\end{assumption}

\begin{assumption}\label{A:realy}
The targeted real state~$y$, satisfying~\eqref{sys-y-o}, satisfies the uniform persistent
boundedness estimate as follows. There are constants~$C_y\ge0$ and~$\tau_y>0$ such that
\begin{align}
\sup_{s\ge0}\norm{y(s)}{V}\le C_y\quad\mbox{and}\quad\sup_{s\ge0}\norm{y}{L^2((s,s+\tau_y),\rmD(A))}<C_y.\notag
\end{align}
\end{assumption}

\begin{assumption}\label{A:DS}
The pair~$(\sigma,(\widetilde W_{S}, W_{S})_{S\in\bbN_0})$ satisfies:
\[
 \sigma\colon\bbN_0\to\bbN_0\mbox{ is strictly increasing}
\]
and, with $S_\sigma\coloneqq\sigma(S)$, $\widetilde\clW_{S}=\linspan\widetilde W_{S}$,
and~$\clW_{S}=\linspan W_{S}$,
 \begin{align}
&W_{S}\coloneqq\{\fkw_j\mid  1\le j\le S_\sigma\}\subset H,\notag\\
&\widetilde W_{S}\coloneqq\{\widetilde \fkw_j\mid  1\le j\le S_\sigma\}\subset\rmD(A)\subset H,\notag\\
&\dim\clW_{S}=S_\sigma=\dim\widetilde\clW_{S}\mbox{ and }H=\clW_{S}\oplus\widetilde \clW_{S}^\perp.\notag
\end{align}
\end{assumption}

The key assumption concerns the following Poincar\'e-like constant 
\begin{align}\label{Poinc_const}
\beta_{S_{\sigma+}}\coloneqq\inf_{Q\in(\rmD(A)\bigcap\clW_{S}^\perp)\setminus\{0\}}
\tfrac{\norm{Q}{\rmD(A)}^2}{\norm{Q}{V}^2}.
\end{align}
\begin{assumption}\label{A:Poincare}
The sequence~$(\beta_{S_{\sigma+}})_{S\in\bbN_0}$ in~\eqref{Poinc_const}
is divergent, $\lim\limits_{S\to+\infty}\beta_{S_{\sigma+}}=+\infty$.
\end{assumption}

The last assumption concerns the type of outputs.

\begin{assumption}\label{A:output}
The output~$w=\clZ    y\in\bbR^{S_\sigma}$ is of the form
~$w_i(t)=(\fkw_i,y(t))_H$, with~$\fkw_i\in W_S$. 
\end{assumption}

Assumptions~\ref{A:A0sp}--\ref{A:DS} are satisfiable for parabolic systems as~\eqref{sys-y-o-parab}.
Assumptions~\ref{A:A0sp}--\ref{A:A1} are usually not hard to check for such systems.
Assumption~\ref{A:N} is satisfied by a general class of polynomial nonlinearities as in~\eqref{sys-y-o-parab}.
Assumption~\ref{A:realy} is a requirement on our targeted state, which simply says that the real state to be estimated
is a strong solution which is bounded in a general appropriate way.
It is also not difficult to construct spaces satisfying
Assumption~\ref{A:DS}, and then in Assumption~\ref{A:output} we are simply requiring the form of the output. 

The satisfiability of Assumption~\ref{A:Poincare} is nontrivial.
We shall prove in Section~\ref{S:parabolic-OK} that it is 
satisfied for scalar parabolic equations evolving in rectangular
spatial domains~$\Omega\subset\bbR^d$, for suitable placement of the 
sensors (as indicator functions). The proof can be adapted to general convex polynomial domains.
The satisfiability of the Assumption~\ref{A:Poincare} for general smooth domains is an open question.
See the discussion in~\cite[Sect.~7.3]{Rod20-sicon}.

\begin{remark}
Note that Assumption~\ref{A:A1} is stronger than the one taken in~\cite[Assum.~2.3]{Rod20-sicon}
in the linear setting.
We need extra regularity for~$A_{\rm rc}$ because weak solutions,
as considered in~\cite{Rod20-sicon}, living in~$W_{\rm loc}(\bbR_0,V,V')$,
are not regular
enough to deal with the entire class of nonlinear systems we shall consider here.
We need strong solutions, living in~$W_{\rm loc}(\bbR_0,\rmD(A),H)$,
to guarantee the existence and uniqueness of
solutions for all systems involved in our analysis.
\end{remark}

\section{Exponential stability of the error dynamics}\label{S:exp_error}

For given~$S\in\bbN_0$ and~$\ell\in\bbR$, we define another Poicar\'e-like constant as follows
\begin{equation}\label{und-alpha}
0<\underline\alpha_{S,\ell}\coloneqq\inf\limits_{q\in\widetilde\clW_S\setminus\{0\}}
\tfrac{\norm{q}{\rmD\bigl(A^\frac{\ell}2\bigr)}^2}{\norm{q}{\rmD(A)}^2}.
\end{equation}

We prove the following more general abstract version of the main Main Result. 
 \begin{theorem}\label{T:main}
 Let Assumptions~\ref{A:A0sp}--\ref{A:DS} hold true and let us be
 given~$\ell\in[0,2]$, $R>0$, $\varrho>1$, and~$\mu>0$. Then there exists a
 pair~$(S^*,\lambda^*)\in\bbN_0\times\bbR_0$ such that:
 for all pairs~$(S,\lambda)$ satisfying~$S\ge S^*$
 and~$\lambda\ge\lambda^*(S)$,
 the error dynamical system
\begin{subequations}\label{sys-error}
\begin{align} 
 &\dot{z} +Az+A_{\rm rc}(t)z +\fkN_y(t,z)=\widetilde \fkI_{S}^{[\lambda,\ell]}z,\quad z(0)=z_0\in V,\qquad t\ge 0
  ,\label{sys-error-dynic}
  \intertext{where}
  &\fkN_y(t,z)\coloneqq\clN(t,z+y)-\clN(t,y)
  \qquad\mbox{and}\qquad\widetilde \fkI_{S}^{[\lambda,\ell]}z
  = -\lambda A^{-1}P_{\clW_{S}}^{\widetilde\clW_{S}^\perp}A^\ell P_{\widetilde\clW_{S}}^{\clW_{S}^\perp}z,
\end{align}
is exponentially stable with rate~$-\mu$ and transient bound~$\varrho$.
For all~$z_0\in V$ the solution of~\eqref{sys-error} satisfies
\begin{align}\label{goal-dif-error} 
 \norm{z(t)}{V}\le \varrho\ex^{-\mu (t-s)}\norm{z(s)}{V},\quad\mbox{for all}\quad t\ge s\ge0.
\end{align}
Furthermore, the constants~$S^*$ and~$\lambda^*(S)$ can be taken of the form
\begin{align}
 S^*&=\ovlineC{R,\mu,\varrho,\frac1{\varrho^\frac12-1},\tfrac1{\tau_y},\tau_y,
 \frac{2\dnorm{\zeta_{1}}{}}{1-\dnorm{\delta_{2}+\zeta_{2}}{}},
 \frac{2+\dnorm{\frac{2\zeta_{2}}{1-\delta_{2}}}{}}{2-\dnorm{\frac{2\zeta_{2}}{1-\delta_{2}}}{}}
 ,\frac{2\dnorm{\delta_{1}+\delta_{2}+\zeta_{1}+\zeta_{2}}{}-2}{1-\dnorm{\delta_{2}+\zeta_{2}}{}},C_{\rm rc},C_y},
\intertext{and}
 \lambda^*(S)&=\ovlineC{\frac{1}{\underline\alpha_{S,\ell}},R,\mu,\varrho,\frac1{\varrho^\frac12-1},
 \tfrac1{\tau_y},\tau_y,
 \frac{2\dnorm{\zeta_{1}}{}}{1-\dnorm{\delta_{2}+\zeta_{2}}{}},
 \frac{2+\dnorm{\frac{2\zeta_{2}}{1-\delta_{2}}}{}}{2-\dnorm{\frac{2\zeta_{2}}{1-\delta_{2}}}{}}
 ,\frac{2\dnorm{\delta_{1}+\delta_{2}+\zeta_{1}+\zeta_{2}}{}-2}{1-\dnorm{\delta_{2}+\zeta_{2}}{}},C_{\rm rc},C_y},
 \label{formlam*}
\end{align}
where~$(C_{\rm rc},C_y,\tau_y,\delta,\zeta)$ is the data in Assumptions~\ref{A:A1}--\ref{A:realy}.
\end{subequations}
\end{theorem}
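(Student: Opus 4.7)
The plan is to prove Theorem~\ref{T:main} by a Lyapunov argument for $\norm{z(t)}{V}^2$, exploiting the oblique decomposition $z=q+p$ with $q\coloneqq P_{\widetilde\clW_S}^{\clW_S^\perp}z\in\widetilde\clW_S$ and $p\coloneqq z-q\in\clW_S^\perp$. Short-time existence and uniqueness of strong solutions of~\eqref{sys-error} in $L^\infty_{\rm loc}(\bbR_0,V)\cap L^2_{\rm loc}(\bbR_0,\rmD(A))$ follows from a standard fixed-point argument; global existence is then obtained a posteriori from the energy bound and the blow-up alternative. Taking the $V$-inner product of~\eqref{sys-error-dynic} with $z$, using the symmetry of $A$ and the identity $(P_{\clW_S}^{\widetilde\clW_S^\perp})^\ast=P_{\widetilde\clW_S}^{\clW_S^\perp}$ for oblique projections (extended to $\rmD(A^{1-\ell})$ via Proposition~\ref{P:extOblPro} when $\ell>1$), the injection yields the dissipation $\lambda(q,A^\ell q)_H=\lambda\norm{A^{\ell/2}q}{H}^2\ge\lambda\underline\alpha_{S,\ell}\norm{q}{\rmD(A)}^2$ by~\eqref{und-alpha}, producing
\[
 \tfrac12\tfrac{\rmd}{\rmd t}\norm{z}{V}^2+\norm{z}{\rmD(A)}^2+\lambda\underline\alpha_{S,\ell}\norm{q}{\rmD(A)}^2\le|(Az,A_{\rm rc}z)_H|+|(Az,\fkN_y(t,z))_H|.
\]

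The next step absorbs the right-hand side. Cauchy--Schwarz and Assumption~\ref{A:A1} give $|(Az,A_{\rm rc}z)_H|\le\tfrac14\norm{z}{\rmD(A)}^2+C_{\rm rc}^2\norm{z}{V}^2$. For the nonlinearity, after pairing with $\norm{z}{\rmD(A)}$ the $j$-th summand of Assumption~\ref{A:N} has $\rmD(A)$-exponents $1+\delta_{2j}$ in $z$ and $\zeta_{2j}$ in $y$ (or $y+z$); the strict inequality $\zeta_{2j}+\delta_{2j}<1$ permits Young's inequality with conjugates $\bigl(\tfrac{2}{1+\delta_{2j}},\tfrac{2}{1-\delta_{2j}}\bigr)$, yielding a contribution absorbable as $\tfrac14\norm{z}{\rmD(A)}^2$, a residual factor $\norm{y}{\rmD(A)}^{2\zeta_{2j}/(1-\delta_{2j})}$ integrable on windows of length $\tau_y$ by Assumption~\ref{A:realy} (since $2\zeta_{2j}/(1-\delta_{2j})\le 2$), and a polynomial factor in $\norm{z}{V}$. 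These are precisely the structural exponents appearing in~\eqref{formlam*}. Combining,
\[
 \tfrac{\rmd}{\rmd t}\norm{z}{V}^2+\norm{z}{\rmD(A)}^2+2\lambda\underline\alpha_{S,\ell}\norm{q}{\rmD(A)}^2\le\bigl(h_y(t)+\Psi(\norm{z}{V})\bigr)\norm{z}{V}^2,
\]
with $\sup_s\int_s^{s+\tau_y}h_y\,\rmd t\le C$ and $\Psi$ a polynomial whose coefficients are controlled by $(C_y,C_{\rm rc})$ and the structural constants of Assumption~\ref{A:N}.

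The decisive step is to trade $\norm{z}{V}^2$ on the right for $\norm{z}{\rmD(A)}^2$ and $\norm{q}{\rmD(A)}^2$ on the left. Definition~\eqref{Poinc_const} gives $\norm{p}{V}^2\le\beta_{S_{\sigma+}}^{-1}\norm{p}{\rmD(A)}^2$; combined with the uniform $\rmD(A)$-boundedness of $P_{\widetilde\clW_S}^{\clW_S^\perp}$ (from Proposition~\ref{P:extOblPro}) and $\norm{q}{V}^2\le\alpha_1^{-1}\norm{q}{\rmD(A)}^2$, one obtains $\norm{z}{V}^2\le C_1\norm{q}{\rmD(A)}^2+4\beta_{S_{\sigma+}}^{-1}\norm{z}{\rmD(A)}^2$. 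Under the a priori ansatz $\norm{z(t)}{V}\le\varrho^{1/2}R$ the polynomial $\Psi$ is majorized by $\Psi_\ast\coloneqq\Psi(\varrho^{1/2}R)$; by Assumption~\ref{A:Poincare} I fix $S\ge S^\ast$ so that $8(\Psi_\ast+2\mu)\beta_{S_{\sigma+}}^{-1}\le\tfrac14$, absorbing the diffusion contribution, and then $\lambda\ge\lambda^\ast(S)$ so that $\lambda\underline\alpha_{S,\ell}\ge C_1(\Psi_\ast+2\mu)$, absorbing the $q$-contribution. The outcome is the conditional differential inequality $\tfrac{\rmd}{\rmd t}\norm{z}{V}^2+2\mu\norm{z}{V}^2\le h_y(t)\norm{z}{V}^2$.

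The main obstacle is closing the bootstrap to obtain the transient constant $\varrho>1$ self-consistently; since $h_y$ is only integrable and not pointwise small, the differential inequality does not give $\varrho=1$. Grönwall on a sliding window $[s,s+\tau_y]$ yields the endpoint estimate $\norm{z(s+\tau_y)}{V}^2\le\ex^{-2\mu\tau_y+C}\norm{z(s)}{V}^2$, while within each window the worst-case growth is $\norm{z(t)}{V}\le\varrho^{1/2}\norm{z(s)}{V}$; matching this in-window growth to the prescribed $\varrho$ is precisely what forces the factor $1/(\varrho^{1/2}-1)$ in~\eqref{formlam*}, since $S^\ast$ and $\lambda^\ast$ must be large enough that the ansatz remains self-consistent throughout each window. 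Iterating over consecutive windows and composing with the in-window estimates chains into~\eqref{goal-dif-error}; the self-consistency of the ansatz, combined with the blow-up alternative for strong solutions, yields global existence in $V$. Tracking the Young, Poincaré, and Grönwall constants through all the above steps reproduces the explicit form~\eqref{formlam*} for $S^\ast$ and $\lambda^\ast$.
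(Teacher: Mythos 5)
Your energy-estimate core follows the paper's own route: the same oblique splitting ($\theta=P_{\widetilde\clW_{S}}^{\clW_{S}^\perp}z$ and its complement in $\clW_S^\perp$), the same pairing with $Az$, the same extraction of $-2\lambda\norm{\theta}{\rmD(A^{\ell/2})}^2\le-2\lambda\underline\alpha_{S,\ell}\norm{\theta}{\rmD(A)}^2$ from the injection, the same use of $\beta_{S_{\sigma+}}$ on the $\clW_S^\perp$-component and $\underline\alpha_{S,\ell}$, $\alpha_1$ on the $\widetilde\clW_S$-component, and the same order of choices (first $S$, then $\lambda=\lambda(S)$). Where you genuinely diverge is the endgame: the paper reduces to the scalar inequality $\dot\varpi\le-(\overline\mu-\norm{h}{}(1+\norm{\varpi}{}^{\chi_3}))\varpi$ and settles it by a contraction-mapping argument in the weighted space $\clZ_{\varrho,\norm{\varpi_0}{}}^{\mu_0}$ (Propositions~\ref{P:ode-stab0} and~\ref{P:ode-stab}), whereas you propose a bootstrap ansatz $\norm{z(t)}{V}\le\varrho^{1/2}R$ plus Gr\"onwall on sliding windows of length $\tau_y$. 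Both endgames can be made to work, and yours is arguably more elementary; your fixed-point route to well-posedness in place of the paper's Galerkin scheme is likewise a legitimate alternative.

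However, as written there is a gap at the one step that actually carries the theorem: the assertion that ``within each window the worst-case growth is $\norm{z(t)}{V}\le\varrho^{1/2}\norm{z(s)}{V}$.'' Plain Gr\"onwall on the linearized inequality gives $\norm{z(t)}{V}^2\le\ex^{-2\mu(t-s)+\int_s^t h_y\,\ed\tau}\norm{z(s)}{V}^2$, and the windowed bound $\int_s^{s+\tau_y}h_y\,\ed\tau\le C$ yields only the \emph{fixed} transient $\ex^{C}$; crucially, $C$ is determined by $(C_y,C_{\rm rc},\widehat C,\tau_y)$ and does \emph{not} shrink when you enlarge $S$ and $\lambda$ (enlarging them only increases $\overline\mu$, it does not diminish $h_y$). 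So self-consistency of your ansatz is not, by itself, obtainable for $\varrho$ arbitrarily close to $1$. What rescues the claim is the finer information you already have but do not use: $h_y\in L^{\fkr}$ on windows with $\fkr=2/\chi_2>1$, whence by H\"older $\int_s^t h_y\,\ed\tau\le C_h\,(t-s)^{(\fkr-1)/\fkr}\lceil\tfrac{t-s}{\tau_y}\rceil^{1/\fkr}$, and then
\begin{align}
-\overline\mu(t-s)+\textstyle\int_s^t h_y\,\ed\tau
\;\le\;-\mu(t-s)+\max_{\tau\ge0}\bigl\{-(\overline\mu-\mu-\tau_y^{-1/\fkr}C_h)\tau+C_h\tau^{\frac{\fkr-1}{\fkr}}\bigr\}
\;=\;-\mu(t-s)+O\bigl(C_h^{\fkr}\,\overline\mu^{\,1-\fkr}\bigr),\notag
\end{align}
which is exactly the optimization in Proposition~\ref{P:maxpoly1r} that drives Proposition~\ref{P:ode-stab0}. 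Only because $\fkr>1$ does the transient term decay like a negative power of $\overline\mu$, so that it can be forced below $\log\varrho$ by taking $S$ and $\lambda$ (hence $\overline\mu$) large. You should insert this H\"older-plus-optimization step explicitly; without it the bootstrap closes only for $\varrho\ge\ex^{C}$, not for arbitrary $\varrho>1$. The same refinement is what produces the $\tfrac{1}{\varrho^{1/2}-1}$ dependence in~\eqref{formlam*} once the nonlinear perturbation $\norm{h}{}\norm{\varpi}{}^{\chi_3}\varpi$ is reinstated.
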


\begin{remark}
 Recall that~$\|a\|\coloneqq\max\limits_{1\le j\le n} a_j$, for example,
$\dnorm{\frac{2\zeta_{2}}{1-\delta_{2}}}{}=\max\limits_{1\le j\le n}\frac{2\zeta_{2j}}{1-\delta_{2j}}$.
\end{remark}

\begin{remark}\label{R:underAlpha}
 Observe that from~\eqref{formlam*}, if we can show that for a given~$\ell\in[0,2]$ we have
 that~$\underline\alpha_{S,\ell}\ge\underline\alpha>0$ with~$\underline\alpha$ independent of~$S$, then
 we can conclude that the lower bound $\lambda^*(S)$ can be taken independent of~$S$.
 This is always the case for~$\ell=2$ because~$\underline\alpha_{S,2}=1$.
 For~$0\le\ell<2$ the existence of such~$\underline\alpha>0$ is not clear and will/may depend on~$\widetilde\clW_S$.
We will come back to this point in Section~\ref{S:parabolic-OK}; see Proposition~\ref{P:underAlpha},  where
we give an example where such strictly positive lower bound $\underline\alpha$ does not exist for~$\ell\in\{0,1\}$. 
\end{remark}

Note that~\eqref{sys-z-o-dyn} is equivalent to~\eqref{sys-error-dynic}. Indeed,
denoting by~$P_{\clW_{S}}=P_{\clW_{S}}^{\clW_{S}^\perp}$
the orthogonal projection in~$H$ onto~$\clW_{S}$, from~\cite[sect.~2]{Rod20-sicon}, we know that 
  \begin{equation}\label{bfZZ-orthPW}
 \bfZ^{W_s}\clZ= P_{\clW_{S}},
 \end{equation}
which gives us~$\widetilde \fkI_{S}^{[\lambda,\ell]}z=\widetilde \fkI_{S}^{[\lambda,\ell]}P_{\clW_S} z
=\fkI_{S}^{[\lambda,\ell]}\clZ z=\fkI_{S}^{[\lambda,\ell]}(\clZ\widehat y-w)$.

 \subsection{Auxiliary results} In the proof of Theorem~\ref{T:main}, given in
 Section~\ref{sS:proofT:main}, we will use some auxiliary results, which are gathered in this section. 
 
 We start with results on appropriate estimates for the nonlinear term.
 \begin{lemma}\label{L:NN}
 Let Assumptions~\ref{A:A0sp}, \ref{A:A0cdc}, and~\ref{A:N} hold true, and let~$P\in\clL(H)$. Then 
there is a constant $\overline C_{\clN1}>0$
such that:
  for all~$\widehat\gamma_0>0$, all~$t>0$, 
  and all~$(y_1,y_2)\in \rmD(A)\times \rmD(A)$, 
  we have
\begin{align}
 &2\Bigl( P\left(\clN(t,y_1)-\clN(t,y_2)\right),A(y_1-y_2)\Bigr)_{H}\notag\\
 &\le \widehat\gamma_0 \norm{y_1-y_2}{\rmD(A)}^{2}
  +\!\left(\!1+\widehat\gamma_0^{-\frac{1+\|\delta_2\|}{1-\|\delta_2\|} }\right)
 \!\overline C_{\clN1}\sum\limits_{j=1}^n\norm{y_1-y_2}{V}^{\frac{2\delta_{1j}}{1-\delta_{2j}}}
 \sum\limits_{k=1}^2\norm{y_k}{V}^\frac{2\zeta_{1j}}{1-\delta_{2j}}
 \norm{y_k}{\rmD(A)}^\frac{2\zeta_{2j}}{1-\delta_{2j}}.\notag
\end{align}
  Further, the constant~$\overline C_{\clN1}$
  is of the form
  $ \overline C_{\clN1}=\ovlineC{n,\frac{1}{1-\|\delta_{2}\|},C_\clN,\norm{P}{\clL(H)}}$.
 \end{lemma}
The proof of the lemma is given in~\cite[Sect.~A.1]{Rod20-eect}
for operators as~$P=P_{\widetilde\clW_{S}^\perp}^{\clW_{S}}$, however the steps of such proof can be repeated
for a general operator~$P\in\clL(H)$.
See~\cite[Proposition~3.5]{Rod20-eect}.

Now, we present a sequence of auxiliary results as the following propositions.
The corresponding proofs are presented later
in the Appendix.

An estimate for $\fkN_y(t,\widehat y-y)=\clN(t,\widehat y)-\clN(t, y)$ is as follows.
\begin{proposition}\label{P:fkN}
Let Assumptions~\ref{A:A0sp}, \ref{A:A0cdc}, \ref{A:N}, and~\ref{A:realy} hold true. Then 
there are constants $\widetilde C_{\fkN1}>0$, and~$\widetilde C_{\fkN2}>0$
such that:
  for all~$\widehat\gamma_0>0$, all~$t>0$, 
  all~$(z_1,z_2)\in \rmD(A)\times \rmD(A)$, 
  we have
\begin{align}
 &2\Bigl( \fkN_y(t,z_1)-\fkN_y(t,z_2),A(z_1-z_2)\Bigr)_{H}
 \le \widehat\gamma_0 \norm{z_1-z_2}{\rmD(A)}^{2}\label{fkNyAy}\\
 &\hspace*{1em}
   +\!\left(\!1+\widehat\gamma_0^{-\frac{1+\|\delta_2\|}{1-\|\delta_2\|} }\right)
 \!\widetilde C_{\fkN1}\sum\limits_{j=1}^n\norm{z_1-z_2}{V}^{\frac{2\delta_{1j}}{1-\delta_{2j}}}
 \sum\limits_{k=1}^2\norm{y+z_k}{V}^\frac{2\zeta_{1j}}{1-\delta_{2j}}
 \norm{y+z_k}{\rmD(A)}^\frac{2\zeta_{2j}}{1-\delta_{2j}}.\notag\\
&2\Bigl( \fkN_y(t,z_1),Az_1\Bigr)_{H}
 \le \widehat\gamma_0 \norm{z_1}{\rmD(A)}^{2}\label{fkNyAy0}\\
 &\hspace*{.0em}  +\widetilde C_{\fkN2}\left(1+\widehat\gamma_0^{-\chi_5 }\right)\!
\left(1+\widehat\gamma_0^{-\frac{(\chi_5+1)\chi_2\chi_4}2  }\right)\!
   \left(1+\norm{y}{V}^{\chi_1} \right)\!
  \left(1+\norm{y}{\rmD(A)}^{\chi_2}\right)\!
  \left(1+\norm{z_1}{V}^{\chi_3}\right)\!
  \norm{z_1}{V}^2,\notag
  \end{align}
 with~$\widetilde C_{\fkN2}
  =\ovlineC{n,\widetilde C_{\clN1},\dnorm{\zeta_{1}}{},
  \dnorm{\zeta_{2}}{},\frac{1}{1-\dnorm{\delta_{2}}{}},\frac{1}{1-\dnorm{\zeta_{2}+\delta_{2}}{}}}$ and
 \begin{align}
 &\chi_1\coloneqq \tfrac{2\dnorm{\zeta_{1}}{}}{1-\dnorm{\delta_{2}+\zeta_{2}}{}}\ge0,
  \quad &\chi_2&\coloneqq\dnorm{\tfrac{2\zeta_{2}}{1-\delta_{2}}}{}\in[0,2),&&\label{chi12}\\
  &\chi_3\coloneqq
  \tfrac{2\dnorm{\delta_{1}+\delta_{2}+\zeta_{1}+\zeta_{2}}{}-2}{1-\dnorm{\delta_{2}+\zeta_{2}}{}}\ge0,
  \quad&\chi_4&\coloneqq\tfrac{1}{1-\dnorm{\delta_2+\zeta_2}{}  }>1,
  \quad&\chi_5&\coloneqq\tfrac{1+\|\delta_2\|}{1-\|\delta_2\|}>1.\label{chi345}
 \end{align}
  \end{proposition}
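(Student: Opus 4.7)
Both estimates should follow from a single application of Lemma~\ref{L:NN} combined with algebraic manipulations. The key observation is that $\fkN_y(t,z_1)-\fkN_y(t,z_2) = \clN(t,y+z_1) - \clN(t,y+z_2)$ and $A(z_1-z_2) = A\bigl((y+z_1)-(y+z_2)\bigr)$, so the left-hand side of \eqref{fkNyAy} equals $2\bigl(\clN(t,y+z_1)-\clN(t,y+z_2),A((y+z_1)-(y+z_2))\bigr)_H$.

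For estimate \eqref{fkNyAy}, I would simply apply Lemma~\ref{L:NN} with the substitutions $y_1\leftarrow y+z_1$, $y_2\leftarrow y+z_2$, and $P=\Id$ (so $\norm{P}{\clL(H)}=1$). Since $y_1-y_2 = z_1-z_2$, the right-hand side of \eqref{NNyAy} becomes exactly the right-hand side of \eqref{fkNyAy}, with $\widetilde C_{\fkN1}$ inherited from $\overline C_{\clN1}$.

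For estimate \eqref{fkNyAy0}, I would take $z_2=0$ in \eqref{fkNyAy} (using $\fkN_y(t,0)=0$), producing
\[
2\bigl(\fkN_y(t,z_1),Az_1\bigr)_H \le \widehat\gamma_0\norm{z_1}{\rmD(A)}^2 + \bigl(1+\widehat\gamma_0^{-\chi_5}\bigr)\widetilde C_{\fkN1}\sum_{j,k}\norm{z_1}{V}^{\frac{2\delta_{1j}}{1-\delta_{2j}}}\norm{y_k}{V}^{\frac{2\zeta_{1j}}{1-\delta_{2j}}}\norm{y_k}{\rmD(A)}^{\frac{2\zeta_{2j}}{1-\delta_{2j}}},
\]
where $y_1=y+z_1$, $y_2=y$. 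The next step is the triangle inequality $\norm{y+z_1}{V}\le\norm{y}{V}+\norm{z_1}{V}$ and $\norm{y+z_1}{\rmD(A)}\le\norm{y}{\rmD(A)}+\norm{z_1}{\rmD(A)}$ to split each $y_k$-factor. The only problematic piece is the term containing $\norm{z_1}{\rmD(A)}^{2\zeta_{2j}/(1-\delta_{2j})}$. Because Assumption~\ref{A:N} gives $\zeta_{2j}+\delta_{2j}<1$, the exponent $\frac{2\zeta_{2j}}{1-\delta_{2j}}<2$ is strictly below $2$, so Young's inequality with conjugate exponent $\chi_4=\frac1{1-\|\delta_2+\zeta_2\|}$ allows me to absorb the $\rmD(A)$-factor into a fresh $\epsilon\norm{z_1}{\rmD(A)}^2$, at the cost of a power $\widehat\gamma_0^{-(\chi_5+1)\chi_2\chi_4/2}$ on the multiplicative constant (the second Young step). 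The residual $V$-norm exponent on $\norm{z_1}{V}$ becomes $\frac{2(\delta_{1j}+\zeta_{1j})}{1-\delta_{2j}-\zeta_{2j}}$, which is $\ge2$ by $\delta_{1j}+\delta_{2j}\ge1$; splitting off $\norm{z_1}{V}^2$ leaves the residual exponent $\le\chi_3$. Taking the maximum over $j$ and dominating each $\norm{z_1}{V}^{(\text{actual})}$ by $1+\norm{z_1}{V}^{\chi_3}$ (similarly for the $\norm{y}{V}$ and $\norm{y}{\rmD(A)}$ factors via $\chi_1,\chi_2$) consolidates the $n$ terms into the product form stated.

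The main obstacle is not conceptual but bookkeeping: carefully tracking how the two successive Young steps combine the $\widehat\gamma_0$-powers into $\chi_5$ and $(\chi_5+1)\chi_2\chi_4/2$, and how the $\max$-over-$j$ notation consolidates the $j$-indexed exponents $\tfrac{2\delta_{1j}}{1-\delta_{2j}}$, $\tfrac{2\zeta_{1j}}{1-\delta_{2j}}$, $\tfrac{2\zeta_{2j}}{1-\delta_{2j}}$ into the single exponents $\chi_1,\chi_2,\chi_3$ so that the final constant $\widetilde C_{\fkN2}$ depends only on the quantities listed. The inequality $\chi_2<2$ (which makes Young available) uses $\zeta_{2j}<1-\delta_{2j}\le1$, while positivity of $\chi_3$ uses $\delta_{1j}+\delta_{2j}\ge1$; both come exactly from Assumption~\ref{A:N}.
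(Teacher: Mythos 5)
Your proposal is correct and follows essentially the same route as the paper's proof: apply Lemma~\ref{L:NN} with $y_1=y+z_1$, $y_2=y+z_2$ (and $P=\Id$) to get~\eqref{fkNyAy}, then set $z_2=0$, expand $\norm{y+z_1}{V}^{p_j}\norm{y+z_1}{\rmD(A)}^{q_j}$ by the triangle inequality, absorb the $\norm{z_1}{\rmD(A)}^{q_j}$ factor by a second Young step (possible since $q_j=\tfrac{2\zeta_{2j}}{1-\delta_{2j}}<2$), and consolidate the $j$-dependent exponents into $\chi_1,\dots,\chi_5$. The paper additionally treats the $q_j=0$ case separately (where the second Young step is unnecessary) and makes the explicit choices $\widetilde\gamma_0=\widehat\gamma_0/(n+1)$ and of $\gamma_2$ so that the total coefficient of $\norm{z_1}{\rmD(A)}^2$ stays below $\widehat\gamma_0$, but these are exactly the bookkeeping details you flag.
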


The next auxiliary results concern properties of oblique projections. Recall that~$\clW_{S}\subset H=\rmD(A^0)$
and~$\widetilde\clW_{S}\subset\rmD(A)=\rmD(A^1)$, due to Assumption~\ref{A:DS}.
\begin{proposition}\label{P:restOblPro}
Let~$\xi\in[0,1]$. The restriction of the oblique
projection~$P_{\widetilde\clW_{S}}^{\clW_{S}^\perp}\in\clL(H)$ to~$\rmD(A^\xi)\subseteq H$
 is the oblique projection in~$\rmD(A^\xi)$ onto~$\widetilde\clW_{S}$ along~$\clW_{S}^\perp\bigcap \rmD(A^\xi)$.
 That is,
 $P_{\widetilde\clW_{S}}^{\clW_{S}^\perp}\Bigr|_{\rmD(A^\xi)}=P_{\widetilde\clW_{S}}^{\clW_{S}^\perp\cap
 \rmD(A^\xi)}\in\clL(\rmD(A^\xi))$.
\end{proposition}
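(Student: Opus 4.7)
The plan is first to unpack the hypothesis of Assumption~\ref{A:DS}, namely $H=\clW_S\oplus\widetilde\clW_S^\perp$, into the equivalent splitting $H=\widetilde\clW_S\oplus\clW_S^\perp$ needed to even define the oblique projection $P_{\widetilde\clW_S}^{\clW_S^\perp}$. For this I would take $h\in\widetilde\clW_S\cap\clW_S^\perp$ and decompose it as $h=h_W+h_{\widetilde W^\perp}$ with $h_W\in\clW_S$ and $h_{\widetilde W^\perp}\in\widetilde\clW_S^\perp$. Since $h\in\clW_S^\perp$ and $h_W\in\clW_S$, one has $(h,h_W)_H=0$, and since $h\in\widetilde\clW_S$ and $h_{\widetilde W^\perp}\in\widetilde\clW_S^\perp$, one has $(h,h_{\widetilde W^\perp})_H=0$; so $(h,h)_H=0$ and $h=0$. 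Combined with $\dim\widetilde\clW_S=S_\sigma=\operatorname{codim}\clW_S^\perp<\infty$, a dimension count gives $H=\widetilde\clW_S\oplus\clW_S^\perp$, so $P_{\widetilde\clW_S}^{\clW_S^\perp}\in\clL(H)$ is legitimate.

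The crucial observation is that $\widetilde\clW_S\subset\rmD(A)\xhookrightarrow{}\rmD(A^\xi)$ for every $\xi\in[0,1]$. Consequently, for an arbitrary $h\in\rmD(A^\xi)$, the image $p:=P_{\widetilde\clW_S}^{\clW_S^\perp}h\in\widetilde\clW_S$ automatically lies in $\rmD(A^\xi)$, hence so does $q:=h-p=h-P_{\widetilde\clW_S}^{\clW_S^\perp}h\in\clW_S^\perp$. In other words, $q\in\clW_S^\perp\cap\rmD(A^\xi)$, and the decomposition $h=p+q$ exhibits $\rmD(A^\xi)$ as the algebraic sum $\widetilde\clW_S+\bigl(\clW_S^\perp\cap\rmD(A^\xi)\bigr)$. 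Uniqueness of this decomposition is inherited from the uniqueness in $H$, so the sum is direct.

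It remains to verify that this is a topological direct sum inside $\rmD(A^\xi)$, so that the restriction really is an oblique projection in $\rmD(A^\xi)$ in the usual sense. The subspace $\widetilde\clW_S$ is finite-dimensional and therefore closed in $\rmD(A^\xi)$, while $\clW_S^\perp\cap\rmD(A^\xi)$ is closed in $\rmD(A^\xi)$ because it is the preimage of the closed subspace $\clW_S^\perp\subset H$ under the continuous inclusion $\rmD(A^\xi)\xhookrightarrow{} H$. An appeal to the open mapping theorem then shows that the associated linear projection is continuous in $\rmD(A^\xi)$, and by construction it acts as $h\mapsto p$, i.e., it coincides with $P_{\widetilde\clW_S}^{\clW_S^\perp}\bigr|_{\rmD(A^\xi)}$. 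This identifies the restricted operator with $P_{\widetilde\clW_S}^{\clW_S^\perp\cap\rmD(A^\xi)}\in\clL(\rmD(A^\xi))$, as claimed.

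There is no real obstacle here: the entire content of the proposition is that $\widetilde\clW_S$ sits in the strong space $\rmD(A)\subseteq\rmD(A^\xi)$, which is what keeps the oblique-projection decomposition inside $\rmD(A^\xi)$; the only point demanding a little care is the translation of the splitting assumed in Assumption~\ref{A:DS} into the dual splitting that makes $P_{\widetilde\clW_S}^{\clW_S^\perp}$ meaningful in the first place.
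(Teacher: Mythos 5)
Your proof is correct and follows essentially the same route as the paper's: establish that $\widetilde\clW_{S}$ and $\clW_{S}^\perp\cap\rmD(A^\xi)$ are closed in $\rmD(A^\xi)$, deduce the direct sum $\rmD(A^\xi)=\widetilde\clW_{S}\oplus\bigl(\clW_{S}^\perp\cap\rmD(A^\xi)\bigr)$ from the splitting of $H$ together with $\widetilde\clW_{S}\subset\rmD(A)\xhookrightarrow{}\rmD(A^\xi)$, and invoke the standard continuity of projections onto closed complemented subspaces. The only addition is that you explicitly derive $H=\widetilde\clW_{S}\oplus\clW_{S}^\perp$ from Assumption~\ref{A:DS}, a step the paper simply recalls; this is a harmless (and welcome) extra detail rather than a different argument.
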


 For given~$\xi\in[0,1]$, let us define the
 mapping~$P_{\clW_{S}}^{\widetilde\clW_{S}^\perp}\Bigr|^{\rmD(A^{-\xi})}\colon \rmD(A^{-
 \xi})\to\rmD(A^{-\xi})$ by
\begin{equation}\label{extOblPro}
\left\langle P_{\clW_{S}}^{\widetilde\clW_{S}^\perp}\Bigr|^{\rmD(A^{-\xi})}
z,w\right\rangle_{\rmD(A^{-\xi}),\rmD(A^{\xi})}\coloneqq
 \left\langle z,\clP_{\widetilde\clW_{S}}^{\clW_{S}^\perp}w\right\rangle_{\rmD(A^{-\xi}),\rmD(A^{\xi})},
\end{equation}
for all~$(z,w)\in\rmD(A^{-\xi})\times\rmD(A^{\xi})$.

\begin{proposition}\label{P:extOblPro}
 Let~$\xi\in[0,1]$. The mapping~$P_{\clW_{S}}^{\widetilde\clW_{S}^\perp}\Bigr|^{\rmD(A^{-\xi})}$ is an extension of the
 oblique projection~$P_{\widetilde\clW_{S}}^{\clW_{S}^\perp}\in\clL(H)$ to~$\rmD(A^{-\xi})\supseteq H$,
 and we have the adjoint and norm identities as
 \[
  P_{\clW_{S}}^{\widetilde\clW_{S}^\perp}\Bigr|^{\rmD(A^{-\xi})}
  =\left(P_{\widetilde\clW_{S}}^{\clW_{S}^\perp}\Bigr|_{\rmD(A^{\xi})}\right)^*
  \!\!\quad\mbox{and}\quad
  \norm{P_{\clW_{S}}^{\widetilde\clW_{S}^\perp}\Bigr|^{\rmD(A^{-\xi})}}{\clL(\rmD(A^{-\xi}))}
  =\norm{P_{\widetilde\clW_{S}}^{\clW_{S}^\perp}\Bigr|_{\rmD(A^{\xi})}}{\clL(\rmD(A^{\xi}))},
 \]
 where~$P_{\widetilde\clW_{S}}^{\clW_{S}^\perp}\Bigr|_{\rmD(A^{\xi})}$ is the
 restriction in Proposition~\ref{P:restOblPro}.
\end{proposition}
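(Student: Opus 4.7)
The plan is to interpret the defining identity \eqref{extOblPro} as exactly the duality pairing that identifies the extension with the Banach-space adjoint of the restriction from Proposition~\ref{P:restOblPro}, then to transfer the adjoint and norm identities from standard functional analysis, and finally to verify the extension property by falling back on the Hilbert-space identity $(P_{\widetilde\clW_S}^{\clW_S^\perp})^{*}=P_{\clW_S}^{\widetilde\clW_S^\perp}$ in $H$.

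\textbf{Step 1.} I first record that, thanks to Proposition~\ref{P:restOblPro} (with $\xi\in[0,1]$), the restriction $P_{\widetilde\clW_{S}}^{\clW_{S}^\perp}\bigr|_{\rmD(A^{\xi})}$ is a bounded operator on $\rmD(A^{\xi})$. Because $\rmD(A^{\xi})\xhookrightarrow{\rm d}H$, we may identify $\rmD(A^{-\xi})=\rmD(A^{\xi})'$ through the pivot space $H$, so the duality pairing $\langle \cdot,\cdot \rangle_{\rmD(A^{-\xi}),\rmD(A^{\xi})}$ extends the scalar product $(\cdot,\cdot)_H$. This places us in the standard setting for defining a Banach-space adjoint.

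\textbf{Step 2.} By the very definition \eqref{extOblPro}, for each $z\in\rmD(A^{-\xi})$ the linear functional $w\mapsto \langle z,P_{\widetilde\clW_{S}}^{\clW_{S}^\perp}\bigr|_{\rmD(A^{\xi})} w\rangle$ on $\rmD(A^{\xi})$ is continuous, with norm bounded by $\dnorm{z}{\rmD(A^{-\xi})}\dnorm{P_{\widetilde\clW_{S}}^{\clW_{S}^\perp}\bigr|_{\rmD(A^{\xi})}}{\clL(\rmD(A^{\xi}))}$. Hence \eqref{extOblPro} defines a unique element $P_{\clW_{S}}^{\widetilde\clW_{S}^\perp}\bigr|^{\rmD(A^{-\xi})}z\in\rmD(A^{-\xi})$, and the resulting map is linear and bounded. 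The formula \eqref{extOblPro} is precisely the definition of the adjoint operator, so
\[
P_{\clW_{S}}^{\widetilde\clW_{S}^\perp}\Bigr|^{\rmD(A^{-\xi})}=\left(P_{\widetilde\clW_{S}}^{\clW_{S}^\perp}\Bigr|_{\rmD(A^{\xi})}\right)^{*}.
\]
The norm identity is then just the standard fact that the Banach adjoint has the same norm as the original operator.

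\textbf{Step 3.} It remains to verify the extension property. Fix $z\in H\subset\rmD(A^{-\xi})$ and $w\in\rmD(A^{\xi})\subset H$. Then the right-hand side of \eqref{extOblPro} coincides with the Hilbert inner product $\bigl(z,\,P_{\widetilde\clW_{S}}^{\clW_{S}^\perp}w\bigr)_{H}$. On a finite-dimensional direct-sum decomposition $H=\clW_S\oplus\widetilde\clW_S^\perp=\widetilde\clW_S\oplus\clW_S^\perp$ (both valid by Assumption~\ref{A:DS}), a direct computation shows that the Hilbert adjoint of $P_{\widetilde\clW_{S}}^{\clW_{S}^\perp}$ in $H$ equals $P_{\clW_{S}}^{\widetilde\clW_{S}^\perp}$; this follows from testing against generators of each of the four subspaces and using that $(\widetilde\clW_S^\perp)^\perp=\widetilde\clW_S$ (since $\widetilde\clW_S$ is finite dimensional, hence closed). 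Consequently, $\bigl(z,\,P_{\widetilde\clW_{S}}^{\clW_{S}^\perp}w\bigr)_{H}=\bigl(P_{\clW_{S}}^{\widetilde\clW_{S}^\perp}z,\,w\bigr)_{H}=\langle P_{\clW_{S}}^{\widetilde\clW_{S}^\perp}z,\,w\rangle_{\rmD(A^{-\xi}),\rmD(A^{\xi})}$. By density of $\rmD(A^{\xi})$ in $H$, this forces $P_{\clW_{S}}^{\widetilde\clW_{S}^\perp}\bigr|^{\rmD(A^{-\xi})}z=P_{\clW_{S}}^{\widetilde\clW_{S}^\perp}z$ in $\rmD(A^{-\xi})$, proving the extension property.

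\textbf{Main obstacle.} The only subtlety is making sure all the brackets and identifications live in the right Gelfand triple $\rmD(A^{\xi})\xhookrightarrow{\rm d}H\xhookrightarrow{\rm d}\rmD(A^{-\xi})$; once one is careful that $\rmD(A^{-\xi})$ is interpreted as the dual of $\rmD(A^{\xi})$ via the pivot $H$, every step reduces to a routine adjoint/duality computation, and the finite dimensionality of $\clW_S$ and $\widetilde\clW_S$ removes any closedness or complementation worries.
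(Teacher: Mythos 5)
Your proof is correct and follows essentially the same route as the paper: both identify the extension with the Banach adjoint of the restriction via the defining duality relation \eqref{extOblPro}, deduce the norm identity from that duality, and verify the extension property on $H$ by reducing to the Hilbert-space identity $\bigl(P_{\widetilde\clW_{S}}^{\clW_{S}^\perp}\bigr)^{*}=P_{\clW_{S}}^{\widetilde\clW_{S}^\perp}$. The only cosmetic difference is that you verify this last identity by a direct computation on the two direct-sum decompositions of $H$, whereas the paper cites it from an earlier reference.
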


Finally, we present auxiliary results that we use to analyze the stability of the nonlinear error dynamics.
\begin{proposition}\label{P:maxpoly1r}
 Let~$\eta_1>0$, $\eta_2>0$  and~$\fks\in(0,1)$. Then
 \[
 \max_{\tau\ge0}\{-\eta_1\tau+\eta_2\tau^\fks \}
 =(1-\fks)\fks^\frac{s}{1-s} \eta_2^\frac{1}{1-\fks}\eta_1^\frac{\fks}{\fks-1}.
 \]
\end{proposition}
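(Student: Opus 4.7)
The plan is to solve this elementary one-variable optimization by calculus, which is the natural approach since $f(\tau)\coloneqq -\eta_1\tau+\eta_2\tau^\fks$ is smooth on $(0,+\infty)$ and well-behaved at the endpoints. First I would observe that $f(0)=0$, that $f(\tau)\to-\infty$ as $\tau\to+\infty$ because $\fks<1$ forces the linear term $-\eta_1\tau$ to eventually dominate the sublinear term $\eta_2\tau^\fks$, and that $f$ takes strictly positive values for small $\tau>0$ (since there $\eta_2\tau^\fks\gg\eta_1\tau$). It follows that the supremum is attained at an interior critical point in $(0,+\infty)$.

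Second, I would compute $f'(\tau)=-\eta_1+\eta_2\fks\tau^{\fks-1}$ and solve $f'(\tau)=0$, obtaining the unique critical point $\tau^*=(\eta_2\fks/\eta_1)^{1/(1-\fks)}$. Strict concavity on $(0,+\infty)$, coming from $f''(\tau)=\eta_2\fks(\fks-1)\tau^{\fks-2}<0$, confirms that $\tau^*$ is the global maximizer.

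Third, I would substitute $\tau^*$ into $f$ and track the powers of $\eta_1$, $\eta_2$, and $\fks$ separately:
\[
-\eta_1\tau^*=-\fks^{\frac{1}{1-\fks}}\eta_2^{\frac{1}{1-\fks}}\eta_1^{\frac{\fks}{\fks-1}},
\qquad
\eta_2(\tau^*)^\fks=\fks^{\frac{\fks}{1-\fks}}\eta_2^{\frac{1}{1-\fks}}\eta_1^{\frac{\fks}{\fks-1}}.
\]
Factoring out the common $\eta_2^{\frac{1}{1-\fks}}\eta_1^{\frac{\fks}{\fks-1}}\fks^{\frac{\fks}{1-\fks}}$ and using the identity $\fks^{\frac{1}{1-\fks}}=\fks\cdot\fks^{\frac{\fks}{1-\fks}}$, the two terms combine into the single factor $1-\fks$, yielding the claimed expression.

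The only genuine obstacle here is the bookkeeping of the fractional exponents; there is no conceptual step beyond the standard first-derivative test on a strictly concave function on the positive half-line.
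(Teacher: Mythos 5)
Your proposal is correct and follows essentially the same route as the paper: locate the unique interior critical point of $\tau\mapsto-\eta_1\tau+\eta_2\tau^{\fks}$ by setting the derivative to zero, confirm it is the global maximizer (you via concavity, the paper via the sign of the derivative on either side), and substitute to get the stated value. The exponent bookkeeping in your third step matches the paper's computation exactly.
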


 \begin{proposition}\label{P:ode-stab0}
 Let~$T>0$, $C_h>0$,  $\fkr>1$, and~$h\in L^\fkr_{\rm loc}(\bbR_0,\bbR)$ satisfying
 \begin{equation}\label{ode-h}
  \sup_{s\ge0}\norm{h}{L^\fkr((s,s+T),\bbR)}= C_h\le+\infty.
 \end{equation}
Let also, $\mu>0$, and~$\varrho>1$. Then
for every scalar~$\overline\mu>0$ satisfying
\begin{equation}\label{ode-condstab0}
 \overline\mu  \ge \max\left\{2\tfrac{\fkr-1}{\fkr}
 \left(\tfrac{C_h^\fkr}{\fkr\log(\varrho)}\right)^\frac{1}{\fkr-1}, 2\mu\right\}+T^\frac{-1}{\fkr}C_h,
\end{equation}
we have that the scalar~{\sc ode} system
\begin{equation}\label{ode-nonl-p0}
 \dot v=-(\overline\mu-\norm{h}{\bbR})v,\quad v(0)=v_0,
\end{equation}
is exponentially stable with rate $-\mu$ and transient bound~$\varrho$. For every~$v_0\in\bbR$,
\begin{equation}\notag
 \norm{v(t)}{}=\varrho\ex^{-\mu(t-s)}\norm{v(s)}{},\quad t\ge s\ge 0,\quad v(0)=v_0.
\end{equation}
\end{proposition}

 \begin{proposition}\label{P:ode-stab}
 Let~$T>0$, $C_h>0$,   $\fkr>1$, and~$h\in L^\fkr_{\rm loc}(\bbR_0,\bbR)$ satisfy~\eqref{ode-h}.
Let also $R>0$, $p>0$, $\mu>0$, $\varrho>1$, and~$c>1$. Then the  scalar~{\sc ode}
\begin{equation}\label{ode-nonl}
 \dot\varpi=-(\overline\mu-\norm{h}{\bbR}(1+\norm{\varpi}{\bbR}^p))\varpi,\quad \varpi(0)=\varpi_0, 
\end{equation}
is exponentially stable with transient bound~$\varrho$ and rate~$-\mu_0<-\mu$ as
\begin{equation}\label{ode-nonl-mumu0}
 \mu_0\coloneqq\max\left\{\mu,\tfrac{\log(2)}{pT},
 \left(\tfrac{\varrho^{2p+1}R^{p}C_h}{\varrho^\frac12-1}\right)^{\frac{\fkr}{\fkr-1}}
 \left(\tfrac{\fkr-1}{\fkr}\right)2^\frac{1}{\fkr-1}
 ,2^\frac{\fkr+1}{\fkr-1} \left(
  \varrho^{2p+\frac12} C_h\tfrac{p+1}{p}R^pc\right)^\frac{\fkr}{\fkr-1} p^\frac{1}{\fkr-1}\right\},
\end{equation}
if 
\begin{align}\label{ode-condstab}
&\norm{\varpi_0}{}\le R\quad\mbox{and}\quad\overline\mu 
\ge \overline\mu_*\coloneqq \max\left\{2\tfrac{\fkr-1}{\fkr}
\left(\tfrac{2C_h^\fkr}{\fkr\log(\varrho)}\right)^\frac{1}{\fkr-1}, 4\mu_0\right\}+T^\frac{-1}{\fkr}C_h.
\end{align}
That is, the solution satisfies
\begin{equation}\label{ode-stab}
 \norm{\varpi(t)}{\bbR}\le\varrho\ex^{-\mu_0 (t-s)}\norm{\varpi(s)}{\bbR},\quad\mbox{for all}\quad
 t\ge s\ge 0,\quad\mbox{if}\quad\norm{\varpi_0}{}<R.
\end{equation}
\end{proposition}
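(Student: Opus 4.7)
The proof is a continuity/bootstrap argument that reduces the nonlinear ODE to the linear one of Proposition~\ref{P:ode-stab0} via a Bernoulli substitution. Since the right-hand side has $\varpi$ as a factor, the sign of $\varpi$ is preserved along trajectories, so I work with $u(t)\coloneqq|\varpi(t)|$ solving
\[
 \dot u = -\bigl(\overline\mu - |h|(1+u^p)\bigr) u.
\]
The plan is: fix $s\ge 0$; compare $u$ with the linear reference $w$ solving $\dot w = -(\overline\mu - |h|) w$ with $w(s) = u(s)$; then control the ratio $\phi\coloneqq u/w$.

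First, I apply Proposition~\ref{P:ode-stab0} to $w$ with the substitutions $\varrho\to\varrho^{1/2}$ and $\mu\to 2\mu_0$. Since $\log(\varrho^{1/2})=\tfrac12\log\varrho$ and $2(2\mu_0)=4\mu_0$, the condition on $\overline\mu$ in Proposition~\ref{P:ode-stab0} transforms into exactly the formula~\eqref{ode-condstab} for $\overline\mu_*$, yielding
\[
 w(t) \le \varrho^{1/2} e^{-2\mu_0(t-s)} u(s),\qquad t\ge s.
\]
Second, a direct computation shows $\phi=u/w$ satisfies the Bernoulli ODE $\dot\phi = |h|\,\phi^{\,p+1} w^p$, which the substitution $\psi\coloneqq\phi^{-p}$ linearises to $\dot\psi = -p|h| w^p$. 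Hence $\psi(t)=1-p\int_s^t |h|(\tau) w(\tau)^p\,d\tau$, and $\phi(t)\le\varrho^{1/2}$ -- equivalently $u(t)\le\varrho^{1/2}w(t)\le \varrho\,u(s)e^{-2\mu_0(t-s)}\le \varrho\,u(s)e^{-\mu_0(t-s)}$, the desired inequality~\eqref{ode-stab} -- is equivalent to
\[
 p\int_s^t |h|(\tau)\,w(\tau)^p\,d\tau \le 1-\varrho^{-p/2}.
\]

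It remains to verify this integral estimate. Inserting the bound $w(\tau)^p\le\varrho^{p/2}u(s)^p e^{-2p\mu_0(\tau-s)}$ and applying H\"older on subintervals of length $T$, using $\mu_0\ge\log 2/(pT)$ (the second term in the $\max$ defining~$\mu_0$) to bound the resulting geometric sum, I obtain
\[
 \int_s^\infty |h|(\tau)\,e^{-2p\mu_0(\tau-s)}\,d\tau \le 2^{1/\fkr}\,C_h\,(2p\mu_0)^{-(\fkr-1)/\fkr}.
\]
A preliminary bootstrap (run identically but with the weaker target $\phi\le$ constant, giving $u(s)\le\varrho R$ globally) supplies the factor $u(s)^p\le\varrho^p R^p$; the integral inequality then rearranges to an explicit lower bound on $\mu_0$. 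The main obstacle is the bookkeeping: matching this lower bound against the third and fourth entries of the $\max$ defining~$\mu_0$ requires tracking the compound powers of $\varrho$, $R$, and $C_h$ coming from both bootstrap layers (which produces the exponents $\varrho^{2p+1}$ and $\varrho^{2p+1/2}$ from $\varrho^{p/2}\cdot\varrho^p\cdot\varrho^{1/2}$), while the denominator $\varrho^{1/2}-1$ stems from the elementary bound $1-\varrho^{-p/2}\ge (\varrho^{1/2}-1)/\varrho^{p/2}\cdot(p/(p+1))$ or similar. The auxiliary factor $c>1$ in the fourth entry supplies the slack needed to turn this non-strict bound into a \emph{strict} inequality that is valid uniformly for every intermediate starting time $s\ge 0$, which is exactly what closes the continuity argument.
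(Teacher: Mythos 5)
Your Bernoulli reduction is a genuinely different route from the paper's: the paper proves this by a fixed-point argument for the map $\breve\varpi\mapsto\varpi$ solving $\dot\varpi=-(\overline\mu-\norm{h}{})\varpi+\norm{h}{}\norm{\breve\varpi}{}^{p}\breve\varpi$ in the weighted ball $\{\sup_{t}\ex^{\mu_0 t}\norm{g(t)}{}\le\varrho\norm{\varpi_0}{}\}$, where the third entry of the max in~\eqref{ode-nonl-mumu0} is exactly what makes that map a self-map and the fourth is exactly what makes it a $c^{-1}$-contraction. Your computation is correct as far as it goes: $\dot\phi=\norm{h}{}\phi^{p+1}w^{p}$, $\psi=\phi^{-p}$ linearises it, the target~\eqref{ode-stab} reduces to $p\int_s^t\norm{h}{}w^{p}\,\ed\tau\le 1-\varrho^{-p/2}$, the application of Proposition~\ref{P:ode-stab0} with $(\varrho,\mu)\to(\varrho^{1/2},2\mu_0)$ reproduces~\eqref{ode-condstab}, and the H\"older/geometric-sum bound $\int_s^{\infty}\norm{h}{}\ex^{-2p\mu_0(\tau-s)}\,\ed\tau\le 2^{1/\fkr}C_h(2p\mu_0)^{-(\fkr-1)/\fkr}$ is valid under $\mu_0\ge\log(2)/(pT)$.

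The gap is the step you defer to ``bookkeeping'': it cannot be closed with the $\mu_0$ of~\eqref{ode-nonl-mumu0}. Your integral condition, with $u(s)\le\varrho R$, is equivalent to $\mu_0\ge \tfrac{2^{1/(\fkr-1)}}{2}\,p^{1/(\fkr-1)}\bigl(\tfrac{\varrho^{2p}R^{p}C_h}{\varrho^{p/2}-1}\bigr)^{\fkr/(\fkr-1)}$. The inequality you invoke to convert $\varrho^{p/2}-1$ into $\varrho^{1/2}-1$, namely $\varrho^{p/2}-1\ge\tfrac{p}{p+1}(\varrho^{1/2}-1)$, is false: for $\varrho=4$, $p=0.1$ the left side is $\approx 0.072$ and the right side is $\approx 0.091$. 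More importantly, the mismatch is not absorbed by the remaining factors. Comparing your required lower bound with the fourth entry of~\eqref{ode-nonl-mumu0}, the ratio (fourth entry)/(your bound) equals $2^{\fkr/(\fkr-1)+1}\bigl(\varrho^{1/2}c\,\tfrac{(p+1)(\varrho^{p/2}-1)}{p}\bigr)^{\fkr/(\fkr-1)}$, which tends to $2\bigl(\varrho^{1/2}c\log\varrho\bigr)^{\fkr/(\fkr-1)}$ as $p\to0$ and is therefore $<1$ whenever $\varrho$ is close to $1$ -- precisely the regime the Main Result needs; the second and third entries fail to dominate there as well. Structurally, your multiplicative budget $(1-\varrho^{-p/2})/p\le\log\varrho^{1/2}$ is strictly smaller than the additive budget $\varrho^{1/2}-1$ that the paper's self-map step exploits, so your route proves the proposition only with a modified (larger) $\mu_0$, not the stated one. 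A related symptom: $c$ is not ``slack for strictness in a continuity argument''--- in the paper it is the reciprocal of the contraction constant, and your scheme has no natural role for it.
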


\subsection{Proof of the main Theorem~\ref{T:main}}\label{sS:proofT:main}
  We split the error into oblique components as
 \begin{align}\notag
  z=\theta+\varTheta,\quad\mbox{with}\quad \theta\coloneqq P_{\widetilde\clW_{S}}^{\clW_{S}^\perp}z
  \quad\mbox{and}\quad
  \varTheta\coloneqq P_{\clW_{S}^\perp}^{\widetilde\clW_{S}}z,
 \end{align}
 and observe that
\begin{align} 
 \dot{z}
 &=-  Az
  -A_{\rm rc}z
  -\clN(\widehat y)+\clN(y)
  -\lambda A^{-1}P_{\clW_{S}}^{\widetilde\clW_{S}^\perp}A^\ell P_{\widetilde\clW_{S}}^{\clW_{S}^\perp}z\notag\\
 &=-  Az
  -A_{\rm rc}z-\fkN_y(z)
  -\lambda A^{-1}P_{\clW_{S}}^{\widetilde\clW_{S}^\perp}A^\ell\theta\notag
  \end{align}
from which we obtain
\begin{align} 
 \tfrac{\ed}{\ed t}\norm{z}{V}^2
 &=2\left(-  Az
  -A_{\rm rc}z-\fkN_y(z)
  -\lambda A^{-1}P_{\clW_{S}}^{\widetilde\clW_{S}^\perp}A^\ell\theta,Az\right)_H.\label{dtz1}
 \end{align}

 Observe that, by direct computations, using Assumptions~\ref{A:A0sp}--\ref{A:A1}
 and the Young inequality, we find for
 all~$(\gamma_1,\gamma_2)\in(0,2)\times\bbR_0$,
  \begin{align} 
 &2\left(-  Az
  -A_{\rm rc}z,Az\right)_H
 \le-(2-\gamma_1)\norm{z}{\rmD(A)}^2+\gamma_1^{-1}C_{\rm rc}^2\norm{z}{V}^2\notag\\
 &\hspace*{1.5em}\le-(2-\gamma_1)(1-\gamma_2)\norm{\varTheta}{\rmD(A)}^2
 -(2-\gamma_1)(1-\gamma_2^{-1})\norm{\theta}{\rmD(A)}^2+\gamma_1^{-1}C_{\rm rc}^2\norm{z}{V}^2\notag\\
 &\hspace*{1.5em}\le-(2-\gamma_1)(1-\gamma_2)\norm{\varTheta}{\rmD(A)}^2
 +2\gamma_1^{-1}C_{\rm rc}^2\norm{\varTheta}{V}^2\notag\\
 &\hspace*{2.65em}-(2-\gamma_1)(1-\gamma_2^{-1})\norm{\theta}{\rmD(A)}^2
 +2\gamma_1^{-1}C_{\rm rc}^2\norm{\theta}{V}^2
.
 \label{thetas-est1}
  \end{align}
 Direct computations also give us
 \begin{align} 
 2\left(-\lambda A^{-1}P_{\clW_{S}}^{\widetilde\clW_{S}^\perp}A^\ell\theta,Az\right)_H
 &=-2\lambda\left( P_{\clW_{S}}^{\widetilde\clW_{S}^\perp}A^\ell\theta,z\right)_H
 =-2\lambda\left( A^\ell\theta,P_{\widetilde\clW_{S}}^{\clW_{S}^\perp}z\right)_H\notag\\
 &=-2\lambda\norm{\theta}{\rmD\bigl(A^\frac{\ell}2\bigr)}^2.
 \label{thetas-est2}
  \end{align}
 
For the nonlinear term,
using~\eqref{fkNyAy0} and the Young inequality, we find for all~$\gamma_3\in\bbR_0$,
\begin{subequations}\label{thetas-est3}
 \begin{align}
&2\Bigl( \fkN_y(t,z),Az\Bigr)_{H}
 \le \gamma_3 \norm{z}{\rmD(A)}^{2}\notag\\
 &\hspace*{.0em}  +\widetilde C_{\fkN2}\left(1+\widehat\gamma_0^{-\chi_5 }\right)\!
\left(1+\widehat\gamma_0^{-\frac{(\chi_5+1)\chi_2\chi_4}2  }\right)\!
   \left(1+\norm{y}{V}^{\chi_1} \right)\!
  \left(1+\norm{y}{\rmD(A)}^{\chi_2}\right)\!
  \left(1+\norm{z}{V}^{\chi_3}\right)\!
  \norm{z}{V}^2\notag
  \end{align} 
  which implies
 \begin{align} 
 2\Bigl( \fkN_y(t,z),Az\Bigr)_{H}&\le \gamma_3 \norm{z}{\rmD(A)}^{2}+\widehat C
   \Psi(y)
  \left(1+\norm{z}{V}^{\chi_3}\right)
  \norm{z}{V}^2,
  \intertext{with}
    \widehat C&=\ovlineC{n,\widetilde C_{\clN1},\dnorm{\zeta_{1}}{},
  \dnorm{\zeta_{2}}{},\frac{1}{1-\dnorm{\delta_{2}}{}},
  \frac{1}{1-\dnorm{\zeta_{2}+\delta_{2}}{}},\frac{1}{\gamma_3}},\\
 \Psi(y)&\coloneqq\left(1+\norm{y}{V}^{\chi_1} \right)
  \left(1+\norm{y}{\rmD(A)}^{\chi_2}\right).
   \end{align}
\end{subequations}

Combining~\eqref{dtz1}, \eqref{thetas-est1}, \eqref{thetas-est2}, and~\eqref{thetas-est3}, 
 it follows that
 \begin{align} 
 \tfrac{\ed}{\ed t}\norm{z}{V}^2
 &\le -\left((2-\gamma_1)(1-\gamma_2)-2\gamma_3\right)\norm{\varTheta}{\rmD(A)}^2
 +2\gamma_1^{-1}C_{\rm rc}^2\norm{\varTheta}{V}^2
 \notag\\
 &\quad-2\lambda\norm{\theta}{\rmD\bigl(A^\frac{\ell}2\bigr)}^2+\left((2-\gamma_1)
 (\gamma_2^{-1}-1)-2\gamma_3\right)\norm{\theta}{\rmD(A)}^2
 +2\gamma_1^{-1}C_{\rm rc}^2\norm{\theta}{V}^2
 \notag\\
 &\quad+\widehat C   \Psi(y)  \left(1+\norm{z}{V}^{\chi_3}\right)  \norm{z}{V}^2.\notag
\end{align}

Next, we choose/fix a triple~$(\gamma_1,\gamma_2,\gamma_3)$, small enough, such that
 \begin{align}
 & (\gamma_1,\gamma_2,\gamma_3)\in(0,2)\times(0,1)\times\bbR_0,
 \quad\mbox{and}\quad \notag\\
 &C_{\gamma,1}\coloneqq(2-\gamma_1)(1-\gamma_2)-2\gamma_3>0,\qquad 
 C_{\gamma,2}\coloneqq(2-\gamma_1)(\gamma_2^{-1}-1)-2\gamma_3>0.\notag
\end{align}

Let us recall the inequality~$\norm{q}{\rmD\bigl(A^\frac{\ell}2\bigr)}^2
\ge\underline\alpha_{S,\ell}\norm{q}{\rmD(A)}^2$, that
we have due to~\eqref{und-alpha}, the inequality~$\norm{\varTheta}{\rmD(A)}^2
\ge\beta_{S_{\sigma+}}\norm{\varTheta}{V}^2$, that
we have due to~\eqref{Poinc_const}, and also the inequality
~$\norm{\theta}{\rmD(A)}^2\ge\alpha_{1}\norm{\theta}{V}^2$,
where~$\alpha_1>0$ is the first eigenvalue of~$A$.
These inequalities lead us to
 \begin{align} 
 \tfrac{\ed}{\ed t}\norm{z}{V}^2
 &\le -\left(C_{\gamma,1}\beta_{S_{\sigma+}}-2\gamma_1^{-1}C_{\rm rc}^2\right)\norm{\varTheta}{V}^2
  -(2\lambda\underline\alpha_{S,\ell}-C_{\gamma,2}-2\gamma_1^{-1}C_{\rm rc}^2
  \alpha_1^{-1})\norm{\theta}{\rmD(A)}^2
  \notag\\
 &\quad+\widehat C   \Psi(y)  \left(1+\norm{z}{V}^{\chi_3}\right)  \norm{z}{V}^2
 .
 \label{dtz2+}
\end{align}

Next note that Assumption~\ref{A:Poincare} implies
that
\[
\beta^*_{\overline S}\coloneqq\min_{S\ge \overline S}\beta_{S_{\sigma+}}\xrightarrow[]{}+\infty
\quad\mbox{as}\quad\overline S\to+\infty.
\]

Therefore, for any given~$\overline\mu>0$ we can choose ~$S$ large enough so that
\begin{subequations}\label{choice-Slambda}
\begin{align}
C_S &\coloneqq C_{\gamma,1}\beta^*_{S}-2\gamma_1^{-1}C_{\rm rc}^2\ge2\overline\mu
\intertext{and, subsequently, we can choose~$\lambda=\lambda(S)$ large enough satisfying}
C_\lambda&\coloneqq  \alpha_1(2\lambda\underline\alpha_{S,\ell}-C_{\gamma,2}
-2\gamma_1^{-1}C_{\rm rc}^2\alpha_1^{-1})\ge2\overline\mu.
\end{align}
\end{subequations}
Hence, from~\eqref{dtz2+} and~\eqref{choice-Slambda}, we arrive at the estimate
\begin{align}
 \tfrac{\ed}{\ed t}\norm{z}{V}^2\notag
 &\le -2\overline\mu\left(\norm{\varTheta}{V}^2+\norm{\theta}{V}^2\right)
  +\widehat C   \Psi(y)  \left(1+\norm{z}{V}^{\chi_3}\right)  \norm{z}{V}^2\\
  &\le -\overline\mu\norm{z}{V}^2
  +\widehat C   \Psi(y)  \left(1+\norm{z}{V}^{\chi_3}\right)  \norm{z}{V}^2.\notag
 \end{align}

 Using Assumption~\ref{A:realy}, we arrive at
 \begin{subequations}\label{dtz5}
 \begin{align}
 &\tfrac{\ed}{\ed t}\norm{z}{V}^2
 \le -\Bigl(\overline\mu
 -\norm{h(y)}{}\left(1+\norm{z}{V}^{\chi_3}\right)\Bigr)\norm{z}{V}^2,\qquad \norm{z(0)}{}=\norm{z_0}{},\\
 \intertext{with}
&\norm{h(y)}{}=h(y)\coloneqq\widehat C   \Psi(y)\in L^{\fkr}_{\rm loc}(\bbR_0,\bbR),
\qquad \fkr\coloneqq\tfrac{2}{\chi_2}>1,\\
&\norm{h(y)}{L^{\fkr}((s,s+\tau_y),\bbR)}= \widehat C\norm{\Psi(y)}{L^{\fkr}((s,s+\tau_y),\bbR)}\notag\\
&\hspace*{4em}\le \widehat C\norm{1+\norm{y}{V}^{\chi_1} }{L^{\infty}((s,s+\tau_y),\bbR)}
  \norm{1+\norm{y}{\rmD(A)}^{\chi_2}}{L^{\fkr}((s,s+\tau_y),\bbR)}\notag\\
  &\hspace*{4em}\le \widehat C(1+C_y^{\chi_1})
  \left(\tau_y^\frac{1}{\fkr}+\norm{y}{L^{2}((s,s+\tau_y),\rmD(A))}^\frac{2}{\fkr}\right)\eqqcolon C_h.
 \end{align}
 \end{subequations}
 Therefore the norm~$\varpi=\norm{z}{V}^2$ satisfies system~\eqref{ode-nonl},
 with~$h=h(y)$ and~$p=\chi_3\ge0$.
 
 In the case~$p>0$, we use
 Proposition~\ref{P:ode-stab} to conclude that, for any given~$\varrho>1$ and~$\mu>0$, the norm satisfies
 \begin{equation}\label{final.expp}
 \norm{z(t)}{V}^2\le\varrho\ex^{-\mu (t-s)}\norm{z(s)}{V}^2,\quad\mbox{for}\quad
 t\ge s\ge 0,\quad\mbox{and}\quad\norm{z(0)}{V}^2<R, \qquad p>0,
 \end{equation}
provided we take~$\overline\mu$ large enough.

In the case~$p=0$, we use
 Proposition~\ref{P:ode-stab0} to conclude that, for any given~$\varrho>1$ and~$\mu>0$, the norm satisfies
 \begin{equation}\notag
 \norm{z(t)}{V}^2\le\varrho\ex^{-\mu (t-s)}\norm{z(s)}{V}^2,\quad\mbox{for}\quad
 t\ge s\ge 0,\quad\mbox{and}\quad z(0)\in V,  \qquad p=0,
  \end{equation}
provided we take~$\overline\mu$ large enough.

In particular~\eqref{final.expp} actually holds for all~$p\ge0$: we have that
 \begin{equation}\label{final.exp}
 \norm{z(t)}{V}^2\le\varrho\ex^{-\mu (t-s)}\norm{z(s)}{V}^2,\quad\mbox{for all}\quad
 t\ge s\ge 0,\quad\mbox{and all}\quad\norm{z(0)}{V}^2<R,
  \end{equation}
provided we take~$\overline\mu$ large enough.
That is, provided we take a large enough~$S$
and a large enough~$\lambda=\lambda(S)>0$.
Recalling~\eqref{choice-Slambda}, note that
~$C_S$ increases with~$S$, and also that, for a fixed~$S$, ~$C_\lambda$ increases with~$\lambda$.
Finally, note that from Proposition~\ref{P:ode-stab}, we can conclude that it is enough
to choose a pair~$(S_*,\lambda_*)\in\bbN_0\times\bbR_0$ such that
\[
C_{S_*}\ge 2\overline\mu,\quad C_{\lambda_*}\ge2\overline\mu,\quad\mbox{and}\quad\overline\mu\ge
=\ovlineC{\mu,\frac1{\tau_y},\varrho,\frac1{\varrho^\frac12-1},\frac{\fkr+1}{\fkr-1},R,C_h,\chi_3}.
\]
For that, using~\eqref{dtz5}, it is enough to choose, firstly~$S\ge S^*$ with~$S_*$ in the
form
\[S_*=\ovlineC{R,\mu,\varrho,\frac1{\varrho^\frac12-1},\frac1{\tau_y},\tau_y,\chi_1,\frac{2+\chi_2}{2-\chi_2},\chi_3,
C_{\rm rc},C_y},\]
and subsequently~$\lambda\ge\lambda^*(S)$ with~$\lambda^*(S)$ in the
form
\[
\lambda^*(S)=\ovlineC{\frac{1}{\underline\alpha_{S,\ell}},R,\mu,
\varrho,\frac1{\varrho^\frac12-1},\frac1{\tau_y},\tau_y,\chi_1,\frac{2+\chi_2}{2-\chi_2},\chi_3,
C_{\rm rc},C_y}.\]
We can finish the proof by recalling~\eqref{chi12} and~\eqref{chi345}.
\qed

\subsection{Boundedness of the output injection operator}\label{sS:boundInj-OPN}
Here we present estimates on the norm of the linear injection operator
\begin{align} 
\fkI_{S}^{[\lambda,\ell]}
=-\lambda A^{-1}P_{\clW_{S}}^{\widetilde\clW_{S}^\perp}A^\ell P_{\widetilde\clW_{S}}^{\clW_{S}^\perp}
\bfZ^{W_S} \in\clL(\bbR^{{S_\sigma}},H),\qquad\ell\in[0,2]. \notag
\end{align}

Due to~\eqref{sys-haty-o-bfZ} we have that~$\bfZ^{W_S} \in\clL(\bbR^{{S_\sigma}},\clW_s)$
and we show now that we can write
\begin{align} \notag
 \norm{\fkI_{S}^{[\lambda,\ell]}}{\clL(\bbR^{{S_\sigma}},H)}
 &\le \lambda \widetilde C_{\fkI_S}^{[\ell]}\norm{\bfZ^{W_S}}{\clL(\bbR^{{S_\sigma}},H)},
\end{align}
with
\begin{align} 
\widetilde C_{\fkI_S}^{[\ell]}&\coloneqq\norm{A^{-1}P_{\clW_{S}}^{\widetilde\clW_{S}^\perp}
 A^\ell P_{\widetilde\clW_{S}}^{\clW_{S}^\perp}}{\clL(H)}<+\infty.\notag
\end{align} 
To show such boundedness, we consider the cases~$\ell\in[1,2]$ and~$\ell\in[0,1]$ separately. 

In the case~$\ell\in[1,2]$, we have~$1-\ell\in[-1,0]$ and
~$P_{\clW_{S}}^{\widetilde\clW_{S}^\perp}\in\clL(\rmD(A^{1-\ell}))$,
due to Proposition~\ref{P:extOblPro}. Then we find
\begin{align} 
\widetilde C_{\fkI_S}^{[\ell]}
 &\le \norm{\Id\rest{\clW_S}}{\clL(\rmD(A^{1-\ell}),\rmD(A^{-1}))}
 \norm{P_{\clW_{S}}^{\widetilde\clW_{S}^\perp}}{\clL(\rmD(A^{1-\ell}))}
 \norm{\Id\rest{\widetilde\clW_S}}{\clL(H,\rmD(A))}
 \norm{P_{\widetilde\clW_{S}}^{\clW_{S}^\perp}}{\clL(H)}\!,\quad\ell\in[1,2],\notag
\end{align}
where we have also used~$\norm{A^{-1}}{\clL(\rmD(A^{-1}),H)}=1
=\norm{A^{\ell}}{\clL(\rmD(A^{1}),\rmD(A^{1-\ell}))}$.

In the case~$\ell\in[0,1]$, we have~$1-\ell\in[0,1]$ and
\begin{align} 
\widetilde C_{\fkI_S}^{[\ell]}
 &\le \norm{\Id}{\clL(\rmD(A),H)}
 \norm{P_{\clW_{S}}^{\widetilde\clW_{S}^\perp}}{\clL(H)}
 \norm{\Id}{\clL(\rmD(A^{1-\ell}),H)}
 \norm{\Id\rest{\widetilde\clW_S}}{\clL(H,\rmD(A))}
 \norm{P_{\widetilde\clW_{S}}^{\clW_{S}^\perp}}{\clL(H)},\quad\ell\in[0,1].\notag
\end{align}
where we have also used~$\norm{A^{-1}}{\clL(H,\rmD(A))}=1$.

Next we show that the total ``energy'' spent by the injection operator
is bounded, in case~\eqref{goal-dif-error} holds true.
Indeed, recalling~\eqref{bfZZ-orthPW} we find that
\begin{align}
\norm{\fkI_{S}^{[\lambda,\ell]} (\clZ\widehat y-w)}{L^2(\bbR_0,H)}
&=\norm{\fkI_{S}^{[\lambda,\ell]}\clZ z}{L^2(\bbR_0,H)}
\le  \lambda \widetilde C_{\fkI_S}^{[\ell]}\norm{P^{\clW_S}z}{L^2(\bbR_0,H)}\notag\\
 &\le\lambda \widetilde C_{\fkI_S}^{[\ell]}\norm{z}{L^2(\bbR_0,H)}
 \le\lambda \varrho\widetilde C_{\fkI_S}^{[\ell]}\norm{z(0)}{H}
 \left(\textstyle\int\limits_0^{+\infty}\ex^{-2\mu t}\,\ed t\right)^\frac12,\notag
  \end{align}
which leads us to
$
 \norm{\fkI_{S}^{[\lambda,\ell]} (\clZ\widehat y-w)}{L^2(\bbR_0,H)}
 \le \lambda\varrho (2\mu)^{-\frac12}\widetilde C_{\fkI_S}^{[\ell]}\norm{z(0)}{H}.
$

\subsection{On the existence and uniqueness of solutions for the error}\label{sS:unique-error}
The estimates in Section~\ref{sS:proofT:main} will also hold for Galerkin
approximations of system~\eqref{sys-error} as
\begin{subequations}\label{sys-error-Gal}
\begin{align} 
 &\dot{z}^N +Az^N+P_{E_N} A_{\rm rc}(t)z^N +P_{E_N}\fkN_y(t,z^N)
 =P_{E_N}\fkI_{S}^{[\lambda,\ell]}\clZ z^N,\qquad t\ge 0,\label{sys-error-dyn-Gal}\\
 &z^N(0)=P_{E_N}z_0\in V,\label{sys-error-ic-Gal}
\end{align}
\end{subequations}
 where~$P_{E_N}\in\clL(H)$ is the orthogonal projection in~$H$ onto the
 space~$E_N\coloneqq\linspan\{e_n\mid 1\le n\le N\}$ spanned by the first
eigenfunctions of~$A$.

Let us fix~$\varrho>1$, $\mu>0$, and~$s>0$.
We may repeat the estimates in Section~\ref{sS:proofT:main} and arrive to
the analogous of~\eqref{dtz5} and~\eqref{final.exp},
\begin{subequations}\label{dtz5-Gal}
 \begin{align}
 &\tfrac{\ed}{\ed t}\norm{z^N}{V}^2
 \le -\Bigl(\overline\mu
 -\norm{h(y)}{}\left(1+\norm{z^N}{V}^{\chi_3}\right)\Bigr)\norm{z^N}{V}^2,
 \end{align}
 \begin{equation}\label{final.exp-N}
 \norm{z^N(t)}{V}^2\le\varrho\ex^{-\mu (t-s)}\norm{z^N(s)}{V}^2,\quad\mbox{for all}\quad
 t\ge s\ge 0,\quad\norm{z_0}{V}^2<R. 
 \end{equation}
 \end{subequations}
provided we take a large enough~$S$ and a large enough~$\lambda>0$.

Note that~$\overline\mu$, $h(y)$, and~$\chi_3$ are independent of~$N$,
and that~$\norm{P_{E_N}z_0}{V}^2\le \norm{z_0}{V}^2<R$.
Hence, $S$ and~$\lambda$ can be taken independent of~$N$.
From~\eqref{final.exp-N} and~\eqref{sys-error-dyn-Gal} it follows
\[
\norm{z^N}{W((0,s),\rmD(A),H)}^2=\norm{z^N}{L^2((0,s),\rmD(A))}^2+\norm{\dot z^N}{L^2((0,s),H)}^2 \le C
\]
with~$C$ independent of~$N$. Indeed, proceeding as in~\cite[Sect.~4.3]{Rod20-eect},
multiplying the equation~\eqref{sys-error-dyn-Gal} by~$Az^N$,
\begin{align}
 \tfrac{\ed}{\ed t}\norm{z^N}{V}^2&\le-2\norm{z^N}{\rmD(A)}^2
 +2C_{\rm rc}\norm{z^N}{V}\norm{z^N}{\rmD(A)}+2\norm{\fkI_{S}^{[\lambda,\ell]}\clZ z^N}{H}\norm{z^N}{\rmD(A)}\notag\\
 &\hspace*{1em}+2\norm{\fkN_y(t,z^N)}{H}\norm{z^N}{\rmD(A)}\notag\\
 &\hspace*{-2.5em}\le-\norm{z^N}{\rmD(A)}^2+3C_{\rm rc}^2\norm{z^N}{V}^2
 +3\norm{\fkI_{S}^{[\lambda,\ell]}\clZ}{\clL(H)}^2\notag
 \norm{z^N}{H}^2
 +\widehat C\Phi(y)\left(1+\norm{z^N}{V}^{\chi_3}\right)\norm{z^N}{V}^2,
\end{align}
where we used~\eqref{thetas-est3} with~$\gamma_3=\frac{1}3$. By~\eqref{final.exp-N},
\begin{align}
 \tfrac{\ed}{\ed t}\norm{z^N}{V}^2&\le-\norm{z^N}{\rmD(A)}^2+C_3,\notag
\end{align}
 and, after integration,
\begin{align}
\norm{z^N(s)}{V}^2+\norm{z^N}{L^2((0,s),\rmD(A))}^2&\le\norm{z^N(0)}{V}^2+sC_3,\notag
\end{align}
Therefore~$\norm{z^N}{L^2((0,s),\rmD(A))}^2\le C_4$, with~$C_4$ independent of~$N$.
Using now~\eqref{sys-error-dyn-Gal}, it follows that
$\norm{\dot z^N}{L^2((0,s),H)}^2\le C_5$, with~$C_5$ independent of~$N$.
Hence, there exists a weak limit~$z^\infty\in W((0,s),\rmD(A),H)$ so that
\[
 z^N\xrightharpoonup[L^2((0,s),\rmD(A))]{} z^\infty\qquad\mbox{and}\qquad \dot z^N
 \xrightharpoonup[L^2((0,s),H)]{} \dot z^\infty.
\]
Clearly for the linear terms we have
\[
 \Phi^N\coloneqq Az^N+ A_{\rm rc}(t)z^N -\fkI_{S}^{[\lambda,\ell]}\clZ z^N
 \xrightharpoonup[L^2((0,s),H)]{} Az^\infty+ A_{\rm rc}(t)z^\infty\eqqcolon \Phi^\infty,
\]
from which we can derive
\[
 Az^N+P_{E_N} A_{\rm rc}(t)z^N -P_{E_N}\fkI_{S}^{[\lambda,\ell]}\clZ z^N\xrightharpoonup[L^2((0,s),H)]{}
 Az^\infty+ A_{\rm rc}(t)z^\infty,
\]
due to the facts that~$Az^N=P_{E_N}Az^N$, and that for all~$h\in L^2((0,s),H)$,
\begin{align}
 \left(P_{E_N}\Phi^N,h\right)_{L^2((0,s),H)}=\left(\Phi^N,h\right)_{L^2((0,s),H)}
 -\left(\Phi^N,(1-P_{E_N})h\right)_{L^2((0,s),H)},\notag
\end{align}
which gives us
\begin{align}
&\lim\limits_{N\to+\infty}\norm{\left(P_{E_N}\Phi^N,h\right)_{L^2((0,s),H)}}{\bbR}
\le\lim\limits_{N\to+\infty}\norm{\Phi^N}{L^2((0,s),H)}\norm{(1-P_{E_N})h}{L^2((0,s),H)},\notag
\end{align}
since~$\norm{\Phi^N}{L^2((0,s),H)}$ is bounded and~$\norm{(1-P_{E_N})h}{\clL(H)}\to0$.
Concerning the existence, it remains to prove that, that the nonlinear term also converges weakly.
Actually we can show that it converges strongly
\begin{equation}\label{NNconvL2}
 P_{E_N}\fkN_y(t,z^N)\xrightarrow[L^2((0,s),H)]{} \fkN_y(t,z^\infty).
 \end{equation}
In order to show~\eqref{NNconvL2} we follow arguments from~\cite[Sect.~4.3]{Rod20-eect}.
From Assumption~\ref{A:N} we have that
\begin{align}
&\norm{\fkN_y(t,z^N)-\fkN_y(t,z^\infty)}{H}=\norm{\clN(t,y+z^N)-\clN(t,y+z^\infty)}{H}\notag\\
&\hspace*{0em}\le C_\clN\textstyle\sum\limits_{j=1}^{n}
  \left( \norm{y+z^N}{V}^{\zeta_{1j}}\norm{y+z^N}{\rmD(A)}^{\zeta_{2j}}
  +\norm{y+z^\infty}{V}^{\zeta_{1j}}\norm{y+z^\infty}{\rmD(A)}^{\zeta_{2j}}\right)
  \! \norm{d^N}{V}^{\delta_{1j}}\!\norm{d^N}{\rmD(A)}^{\delta_{2j}}\!\notag\\
 &\hspace*{0em}= C_\clN\textstyle\sum\limits_{j=1}^{n}\sum\limits_{k=1}^{2}
   \norm{w_k}{V}^{\zeta_{1j}}\norm{w_k}{\rmD(A)}^{\zeta_{2j}}
   \norm{d^N}{V}^{\delta_{1j}}\!\norm{d^N}{\rmD(A)}^{\delta_{2j}}\notag
\end{align}
 with~$d^N\coloneqq z^N-z^\infty$, $w_1\coloneqq y+z^N$, and $w_2\coloneqq y+z^\infty$. Hence we arrive at
 \begin{align}
&\norm{\fkN_y(t,z^N)-\fkN_y(t,z^\infty)}{H}\notag\\
&\hspace*{0em}\le C_{\clN}\textstyle\sum\limits\limits_{j=1}^n
\norm{\left( \sum\limits_{k=1}^2\norm{w_k}{V}^{\zeta_{1j}}\norm{w_k}{\rmD(A)}^{\zeta_{2j}}
 \right)
 \norm{d^N}{\rmD(A)}^{\delta_{2j}}}{L^\frac{2}{\zeta_{2j}+\delta_{2j}}\!(\clJ_s,\bbR)}
 \norm{\norm{d^N}{V}^{\delta_{1j}}}{L^\frac{2}{1-\zeta_{2j}-\delta_{2j}}\!(\clJ_s,\bbR)},\notag
 \end{align}
whose right-hand side is similar to an expression we find in~\cite[Sect.~4.3]{Rod20-eect}.
Thus, we can repeat the arguments in~\cite{Rod20-eect}
to conclude that 
 \[
 \fkN_y(t,z^N)\xrightarrow[L^2((0,s),H)]{} \fkN_y(t,z^\infty),
 \]
 from which we can derive~\eqref{NNconvL2}, due to
 \begin{align}
 &\norm{P_{E_N}\fkN_y(t,z^N)- \fkN_y(t,z^\infty)}{L^2((0,s),H)}^2\notag\\
 &\hspace*{1.5em}=\norm{P_{E_N}\left(\fkN_y(t,z^N)- \fkN_y(t,z^\infty)\right)}{L^2((0,s),H)}^2
 +\norm{(1-P_{E_N})\fkN_y(t,z^\infty)}{L^2((0,s),H)}^2\notag\\
 &\hspace*{2.5em}
  +2(P_{E_N}\left(\fkN_y(t,z^N)- \fkN_y(t,z^\infty)\right),(1-P_{E_N})
  \fkN_y(t,z^\infty))_{L^2((0,s),H)},\notag\\
&\norm{P_{E_N}\left(\fkN_y(t,z^N)- \fkN_y(t,z^\infty)\right)}{L^2((0,s),H)}^2
 \le\norm{\left(\fkN_y(t,z^N)- \fkN_y(t,z^\infty)\right)}{L^2((0,s),H)}^2,  \notag
  \end{align}
which imply
 \begin{align}
 &\lim_{N\to+\infty}\norm{P_{E_N}\fkN_y(t,z^N)- \fkN_y(t,z^\infty)}{L^2((0,s),H)}^2\notag\\
 &\hspace*{1em}=\lim_{N\to+\infty}2(P_{E_N}\left(\fkN_y(t,z^N)- \fkN_y(t,z^\infty)\right),
 (1-P_{E_N})\fkN_y(t,z^\infty))_{L^2((0,s),H)}\notag\\
 &\hspace*{1em}\le\lim_{N\to+\infty}\!2\norm{\left(\fkN_y(t,z^N)- \fkN_y(t,z^\infty)\right)}{L^2((0,s),H)}
 \norm{(1-P_{E_N})\fkN_y(t,z^\infty)}{L^2((0,s),H)}\notag\\
 &\hspace*{1em}=0. \notag
 \end{align}
Therefore~$z^\infty$ solves system~\eqref{sys-error}.

 Finally, we show the uniqueness of the solution of system~\eqref{sys-error} in~$W((0,s),\rmD(A),H)$.
For an arbitrary solution~$z$ in~$W((0,s),\rmD(A),H)$, $z(0)=z_0$, for $G\coloneqq z-z^\infty$ we find
\begin{align}
    \dot G + AG +A_{\rm rc}G+\fkN_y(z)-\fkN_y(z^\infty)&=\fkI_{S}^{[\lambda,\ell]}\clZ G,\qquad  G(0)=0.\notag
\end{align}
Observe also that~$\fkN_y(z)-\fkN_y(z^\infty)=\clN(t,y+z)-\clN(t,y+z^\infty)$. Again we can repeat the argument
in~\cite[Sect.~4.3]{Rod20-eect}, by Assumption~\ref{A:N} to conclude that,
with~$z_1=y+z$ and~$z_2=y+z^\infty$,  
\begin{align}
 &2\Bigl( \left(\clN(t,z_1)-\clN(t,z_2)\right),AG\Bigr)_{H}
 \le\norm{G}{\rmD(A)}^{2}  + \Phi(t)\norm{G}{V}^{2},\notag\\
 &\Phi(t)\coloneqq\overline C_{\clN1}\sum\limits_{j=1}^n
 \left( \norm{z_1}{V}^{\frac{2\zeta_{1j}}{1-\delta_{2j}-\zeta_{2j}}}
 +\norm{z_2}{V}^{\frac{2\zeta_{1j}}{1-\delta_{2j}-\zeta_{2j}}}
 + \norm{z_1}{\rmD(A)}^2+\norm{z_2}{\rmD(A)}^2
 \right)\norm{G}{V}^{\frac{2\delta_{1j}}{1-\delta_{2j}}-2}.\notag
\end{align}

By using Assumption~\ref{A:A1} and the Young inequality, we find
\begin{align}
    \frac{\ed}{\ed t}\norm{G}{V}^2 
    &\le -2\norm{G}{\rmD(A)}^2+ \Phi(t)\norm{G}{V}^{2}
    + 2C_{\rm rc}^2\norm{G}{V}^{2}+ 2\norm{\fkI_{S}^{[\lambda,\ell]}\clZ}{\clL(H)}^2
    \norm{G}{V}^{2}+2\norm{G}{\rmD(A)}^{2}\notag\\
    &\le\Phi_2(t)\norm{G}{V}^{2}.\notag
   \end{align} 
with~$\Phi_2(t)\coloneqq 2C_{\rm rc}^2+ 2\norm{\fkI_{S}^{[\lambda,\ell]}\clZ}{\clL(H)}^2+\Phi(t)$. 
From~$z_1=y+z$ and~$z_2=y+z^\infty$, Assumption~\ref{A:realy}, and $\{z_1,z_2\}\subset \clC([0,s],V)\textstyle\bigcap
 L^2((0,s),\rmD(A))$, we see that~$\Phi_2$ is integrable on~$(0,s)$. Hence, by the Gronwall inequality,
\[
 \norm{G(t)}{V}^2\le\ex^{\int_{0}^t\Phi_2(\tau)\,\ed \tau}\norm{G(0)}{V}^2=0, \quad\mbox{for all}\quad t\in[0,s].
\]
That is, $G=0$ and~$z=z^\infty+G=z^\infty$. We have shown the uniqueness
of the solution for~\eqref{sys-error} in~$W((0,s),\rmD(A),H)$, for arbitrary~$s> 0$.
In other words, the solution for~\eqref{sys-error} 
is unique in~$W_{\rm loc}(\bbR_0,\rmD(A),H)\supset W(\bbR_0,\rmD(A),H)$.

\subsection{On the existence and uniqueness of solutions for systems~\eqref{sys-y-o} and~\eqref{sys-haty-o}}

\label{sS:unique-yhay}
Proceeding as in Section~\ref{sS:unique-error}, see also~\cite[Sect.~4.3]{Rod20-eect},
we can show that the solution~$y$ for system~\eqref{sys-y-o},
assumed in Assumption~\ref{A:realy} to exist in~$W_{\rm loc}(\bbR_0,\rmD(A),H)$, is unique.
Thus from Section~\ref{sS:unique-error} the solution~$z$, given by Theorem~\ref{T:main} for the
error dynamics, is also unique. Consequently,
the solution~$\widehat y=y+z\in W(\bbR_0,\rmD(A),H)$ for~\eqref{sys-haty-o} exists and is unique.

 \section{Parabolic equations evolving in rectangular domains}\label{S:parabolic-OK}

In order to apply Theorem~\ref{T:main} to the case of scalar parabolic equations, it is enough
to show that our Assumptions~\ref{A:A0sp}--\ref{A:DS} are satisfied, for the operators
defined as in Section~\ref{sS:illust-parabolic}. Assumptions~\ref{A:A0sp}--\ref{A:A0cdc} are satisfied with
 ~$A=-\nu\Delta+\Id$. Assumption~\ref{A:A1} is satisfied with 
 $A_{\rm rc}=a\Id+b\cdot\nabla\Id\in L^\infty(\bbR_0,\clL(V,H))$, because $a$ and ~$b$ are
 both essentially bounded, see~\eqref{assum.abf.parab}.
 Assumption~\ref{A:N} is proven in~\cite[Sect.~5.2]{Rod20-eect}. 
 Assumption~\ref{A:realy} will follow for suitable external forces~$f$;
 see discussion in Section~\ref{sS:motivation} and Remark~\ref{R:assumpt-fgy}.
 Assumption~\ref{A:output} is satisfied for outputs as in~\eqref{output-parab}.

It remains to show the satisfiability of Assumptions~\ref{A:DS}--\ref{A:Poincare}. For this purpose we borrow
arguments from~\cite[Sect.~4]{Rod20-sicon} and~\cite[Sect.~6]{Rod-20-CL}.
We restrict ourselves to the case of rectangular domains~$\Omega^\times=\bigtimes_{j=1}^d(0,L_j)\in\bbR^d$.

As set of sensors we take the set of indicators functions
\begin{subequations}\label{mxe-hd}
\begin{align}\label{sens-parab}
 W_S\coloneqq\{\indf_{\omega_i}\mid 1\le i\le S_\sigma\coloneqq (2S)^d\},
\end{align}
where the~$\omega_i$s are subrectangles
\begin{align}\label{mxe-hd_suppsensors}
 \omega_i=\omega_{i,S}\eqqcolon \bigtimes_{j=1}^d(p_j^{i,S},p_j^{i,S}+\tfrac{rL_j}{2S}),
 \qquad p_j^{i,S}=\tfrac{(2j-1)L_i}{4S}-\tfrac{rL_i}{4S}.
\end{align} 
\end{subequations}
as in~\cite[Sect.~4]{Rod20-sicon},
these regions are illustrated in Figure~\ref{fig.suppsensors},
for a planar rectangle~$\Omega^\times=(0,L_2)\times(0,L_2)\in\bbR^2$, where
the total volume (area) covered by the sensors is independent of~$S$. In the figure such volume
is given by~$\frac{1}{16}{\rm vol}(\Omega^\times)$,
which is~$6.25\%$ of the volume of~$\Omega^\times$, $r=\frac14$.


\setlength{\unitlength}{.002\textwidth}
\newsavebox{\Rectfw}%
\savebox{\Rectfw}(0,0){%
\linethickness{3pt}
{\color{black}\polygon(0,0)(120,0)(120,80)(0,80)(0,0)}%
}%
\newsavebox{\Rectfg}%
\savebox{\Rectfg}(0,0){%
{\color{lightgray}\polygon*(0,0)(120,0)(120,80)(0,80)(0,0)}%
}%

\newsavebox{\Rectref}%
\savebox{\Rectref}(0,0){%
{\color{white}\polygon*(0,0)(120,0)(120,80)(0,80)(0,0)}%
{\color{lightgray}\polygon*(45,30)(75,30)(75,50)(45,50)(45,30)}%
}%


\begin{figure}[h!]
\begin{center}
\begin{picture}(500,100)

 \put(0,0){\usebox{\Rectfw}}
 \put(0,0){\scalebox{.5}{\usebox{\Rectref}}}
 \put(60,0){\scalebox{.5}{\usebox{\Rectref}}}
 \put(60,40){\scalebox{.5}{\usebox{\Rectref}}}
 \put(0,40){\scalebox{.5}{\usebox{\Rectref}}}
 \put(190,0){\usebox{\Rectfw}}
 \put(190,0){\scalebox{.25}{\usebox{\Rectref}}}
\put(220,0){\scalebox{.25}{\usebox{\Rectref}}}
\put(250,0){\scalebox{.25}{\usebox{\Rectref}}}
\put(280,0){\scalebox{.25}{\usebox{\Rectref}}}
\put(190,20){\scalebox{.25}{\usebox{\Rectref}}}
\put(220,20){\scalebox{.25}{\usebox{\Rectref}}}
\put(250,20){\scalebox{.25}{\usebox{\Rectref}}}
\put(280,20){\scalebox{.25}{\usebox{\Rectref}}}
\put(190,40){\scalebox{.25}{\usebox{\Rectref}}}
\put(220,40){\scalebox{.25}{\usebox{\Rectref}}}
\put(250,40){\scalebox{.25}{\usebox{\Rectref}}}
\put(280,40){\scalebox{.25}{\usebox{\Rectref}}}
\put(190,60){\scalebox{.25}{\usebox{\Rectref}}}
\put(220,60){\scalebox{.25}{\usebox{\Rectref}}}
\put(250,60){\scalebox{.25}{\usebox{\Rectref}}}
\put(280,60){\scalebox{.25}{\usebox{\Rectref}}}
 \put(380,0){\usebox{\Rectfw}}
 \put(380,0){\scalebox{.1666}{\usebox{\Rectref}}}
\put(400,0){\scalebox{.1666}{\usebox{\Rectref}}}
\put(420,0){\scalebox{.1666}{\usebox{\Rectref}}}
\put(440,0){\scalebox{.1666}{\usebox{\Rectref}}}
 \put(460,0){\scalebox{.1666}{\usebox{\Rectref}}}
\put(480,0){\scalebox{.1666}{\usebox{\Rectref}}}
 \put(380,13.3333){\scalebox{.1666}{\usebox{\Rectref}}}
\put(400,13.3333){\scalebox{.1666}{\usebox{\Rectref}}}
\put(420,13.3333){\scalebox{.1666}{\usebox{\Rectref}}}
\put(440,13.3333){\scalebox{.1666}{\usebox{\Rectref}}}
 \put(460,13.3333){\scalebox{.1666}{\usebox{\Rectref}}}
\put(480,13.3333){\scalebox{.1666}{\usebox{\Rectref}}}
 \put(380,26.6666){\scalebox{.1666}{\usebox{\Rectref}}}
\put(400,26.6666){\scalebox{.1666}{\usebox{\Rectref}}}
\put(420,26.6666){\scalebox{.1666}{\usebox{\Rectref}}}
\put(440,26.6666){\scalebox{.1666}{\usebox{\Rectref}}}
 \put(460,26.6666){\scalebox{.1666}{\usebox{\Rectref}}}
\put(480,26.6666){\scalebox{.1666}{\usebox{\Rectref}}}
 \put(380,40){\scalebox{.1666}{\usebox{\Rectref}}}
\put(400,40){\scalebox{.1666}{\usebox{\Rectref}}}
\put(420,40){\scalebox{.1666}{\usebox{\Rectref}}}
\put(440,40){\scalebox{.1666}{\usebox{\Rectref}}}
 \put(460,40){\scalebox{.1666}{\usebox{\Rectref}}}
\put(480,40){\scalebox{.1666}{\usebox{\Rectref}}}
 \put(380,53.3333){\scalebox{.1666}{\usebox{\Rectref}}}
\put(400,53.3333){\scalebox{.1666}{\usebox{\Rectref}}}
\put(420,53.3333){\scalebox{.1666}{\usebox{\Rectref}}}
\put(440,53.3333){\scalebox{.1666}{\usebox{\Rectref}}}
 \put(460,53.3333){\scalebox{.1666}{\usebox{\Rectref}}}
\put(480,53.3333){\scalebox{.1666}{\usebox{\Rectref}}}
 \put(380,66.6666){\scalebox{.1666}{\usebox{\Rectref}}}
\put(400,66.6666){\scalebox{.1666}{\usebox{\Rectref}}}
\put(420,66.6666){\scalebox{.1666}{\usebox{\Rectref}}}
\put(440,66.6666){\scalebox{.1666}{\usebox{\Rectref}}}
 \put(460,66.6666){\scalebox{.1666}{\usebox{\Rectref}}}
\put(480,66.6666){\scalebox{.1666}{\usebox{\Rectref}}}
\put(40,85){$S=1$}
\put(230,85){$S=2$}
\put(420,85){$S=3$}
\linethickness{2pt}%
{\color{blue}%

\Dashline(460,0)(500,0){2}%
\Dashline(500,0)(500,26.6666){2}%
\Dashline(500,26.6666)(460,26.6666){2}%
\Dashline(460,26.6666)(460,0){2}%
}

\end{picture}
\end{center}
\caption{The sensor supports as in~\eqref{mxe-hd_suppsensors}.
Case~$\Omega^\times\subset\bbR^d$, $d=2$.} \label{fig.suppsensors}
\end{figure}

The choice of the auxiliary set~$\widetilde W_S\subset\rmD(A)$ is at our disposal.
For example, we can take the Cartesian product eigenfunctions of~$A$ as
in~\cite[Sect.~4]{Rod20-sicon}, 
\begin{subequations}\label{choice-tilW}
\begin{align}
 &\widetilde W_S=E_S\coloneqq\{e_\bfi\mid \bfi\in\widehat\bbS^d\},\qquad e_\bfi(x)
 \coloneqq\bigtimes_{j=1}^d e_{\bfi_j}(x_j),\qquad\widehat\bbS\coloneqq\{1,2,\dots,2S\};\label{choice-tilW-eigs}
 \intertext{or, the more ad-hoc functions as in~\cite[Sect.~6]{Rod-20-CL}}
  &\widetilde W_S=\mathbf\Phi_S\coloneqq\{\Phi_i\mid 1\le i\le (2S)^d\},\qquad
  \Phi_i(x)\coloneqq\bigtimes_{j=1}^d \sin^2(S\tfrac{x_j-p_j^{i,S}}{L_j})\label{choice-tilW-adhoc},
 \intertext{or, we could construct and take the functions}
  &\widetilde W_S=\fkA_S\coloneqq\{A^{-2}\indf_{\omega_i}\mid 1\le i\le (2S)^d\}.\label{choice-tilW-constrA}
\end{align}
\end{subequations}
From~\cite[Sect.~4]{Rod20-sicon} and~\cite[Sect.~6]{Rod-20-CL} we know that Assumption~\ref{A:DS} is satisfied
for both choices in~\eqref{choice-tilW}, with~$\sigma(S)\coloneqq (2S)^d$.

It remains to show the satisfiability of Assumption~\ref{A:Poincare}.

\subsection{Previous related work.}
In~\cite[Sect.~5]{Rod20-sicon} it has been shown that a Poincar\'e-like condition as
\begin{equation}\label{feedtriple-PoincareVH}
\lim_{N\to+\infty}\inf_{Q\in(V\bigcap\clO_{N^d}^\perp)\setminus\{0\}}\tfrac{\norm{Q}{V}^2}{\norm{Q}{H}^2}=+\infty.
 \end{equation}
is satisfied for the sensors as indicator functions of the regions
\begin{align}\notag
 \omega_i=\omega_{i,N}\eqqcolon \bigtimes_{j=1}^d(p_j^{i,N},p_j^{i,N}+\tfrac{rL_j}N),
 \qquad p_j^{i,N}=\tfrac{(2j-1)L_i}{2N}-\tfrac{rL_i}{2N}.
\end{align}
Here we prove that the analogous condition in Assumption~\ref{A:Poincare}
is also satisfied for the subsequence of sets of sensors as in~\eqref{mxe-hd_suppsensors}.

The proof of~\eqref{feedtriple-PoincareVH} is given for sensors constructed as
in Figure~\ref{fig.suppsensors-supseq}, with regions


\begin{figure}[h!]
\begin{center}
\begin{picture}(500,100)


\put(0,0){\usebox{\Rectfw}}%
\put(0,0){\usebox{\Rectref}}
 \put(190,0){\usebox{\Rectfw}}
 \put(190,0){\scalebox{.5}{\usebox{\Rectref}}}
 \put(250,0){\scalebox{.5}{\usebox{\Rectref}}}
 \put(250,40){\scalebox{.5}{\usebox{\Rectref}}}
 \put(190,40){\scalebox{.5}{\usebox{\Rectref}}}
 \put(380,0){\usebox{\Rectfw}}
 \put(380,0){\scalebox{.3333}{\usebox{\Rectref}}}
\put(420,0){\scalebox{.3333}{\usebox{\Rectref}}}
\put(460,0){\scalebox{.3333}{\usebox{\Rectref}}}
\put(380,26.6666){\scalebox{.3333}{\usebox{\Rectref}}}
\put(420,26.6666){\scalebox{.3333}{\usebox{\Rectref}}}
\put(460,26.6666){\scalebox{.3333}{\usebox{\Rectref}}}
\put(380,53.3333){\scalebox{.3333}{\usebox{\Rectref}}}
\put(420,53.3333){\scalebox{.3333}{\usebox{\Rectref}}}
\put(460,53.3333){\scalebox{.3333}{\usebox{\Rectref}}}
\put(40,85){$N=1$}
\put(230,85){$N=2$}
\put(420,85){$N=3$}

\end{picture}
\end{center}

\vspace*{1em}
%
\begin{center}
\begin{picture}(500,100)

 \put(0,0){\usebox{\Rectfw}}
 \put(0,0){\scalebox{.25}{\usebox{\Rectref}}}
\put(30,0){\scalebox{.25}{\usebox{\Rectref}}}
\put(60,0){\scalebox{.25}{\usebox{\Rectref}}}
\put(90,0){\scalebox{.25}{\usebox{\Rectref}}}
\put(0,20){\scalebox{.25}{\usebox{\Rectref}}}
\put(30,20){\scalebox{.25}{\usebox{\Rectref}}}
\put(60,20){\scalebox{.25}{\usebox{\Rectref}}}
\put(90,20){\scalebox{.25}{\usebox{\Rectref}}}
\put(0,40){\scalebox{.25}{\usebox{\Rectref}}}
\put(30,40){\scalebox{.25}{\usebox{\Rectref}}}
\put(60,40){\scalebox{.25}{\usebox{\Rectref}}}
\put(90,40){\scalebox{.25}{\usebox{\Rectref}}}
\put(0,60){\scalebox{.25}{\usebox{\Rectref}}}
\put(30,60){\scalebox{.25}{\usebox{\Rectref}}}
\put(60,60){\scalebox{.25}{\usebox{\Rectref}}}
\put(90,60){\scalebox{.25}{\usebox{\Rectref}}}
\put(190,0){\usebox{\Rectfw}}
 \put(190,0){\scalebox{.2}{\usebox{\Rectref}}}
\put(214,0){\scalebox{.2}{\usebox{\Rectref}}}
\put(238,0){\scalebox{.2}{\usebox{\Rectref}}}
\put(262,0){\scalebox{.2}{\usebox{\Rectref}}}
 \put(286,0){\scalebox{.2}{\usebox{\Rectref}}}
\put(190,16){\scalebox{.2}{\usebox{\Rectref}}}
\put(214,16){\scalebox{.2}{\usebox{\Rectref}}}
\put(238,16){\scalebox{.2}{\usebox{\Rectref}}}
\put(262,16){\scalebox{.2}{\usebox{\Rectref}}}
 \put(286,16){\scalebox{.2}{\usebox{\Rectref}}}
\put(190,32){\scalebox{.2}{\usebox{\Rectref}}}
\put(214,32){\scalebox{.2}{\usebox{\Rectref}}}
\put(238,32){\scalebox{.2}{\usebox{\Rectref}}}
\put(262,32){\scalebox{.2}{\usebox{\Rectref}}}
 \put(286,32){\scalebox{.2}{\usebox{\Rectref}}}
\put(190,48){\scalebox{.2}{\usebox{\Rectref}}}
\put(214,48){\scalebox{.2}{\usebox{\Rectref}}}
\put(238,48){\scalebox{.2}{\usebox{\Rectref}}}
\put(262,48){\scalebox{.2}{\usebox{\Rectref}}}
 \put(286,48){\scalebox{.2}{\usebox{\Rectref}}}
\put(190,64){\scalebox{.2}{\usebox{\Rectref}}}
\put(214,64){\scalebox{.2}{\usebox{\Rectref}}}
\put(238,64){\scalebox{.2}{\usebox{\Rectref}}}
\put(262,64){\scalebox{.2}{\usebox{\Rectref}}}
 \put(286,64){\scalebox{.2}{\usebox{\Rectref}}}
%
 \put(380,0){\usebox{\Rectfw}}
 \put(380,0){\scalebox{.1666}{\usebox{\Rectref}}}
\put(400,0){\scalebox{.1666}{\usebox{\Rectref}}}
\put(420,0){\scalebox{.1666}{\usebox{\Rectref}}}
\put(440,0){\scalebox{.1666}{\usebox{\Rectref}}}
 \put(460,0){\scalebox{.1666}{\usebox{\Rectref}}}
\put(480,0){\scalebox{.1666}{\usebox{\Rectref}}}
 \put(380,13.3333){\scalebox{.1666}{\usebox{\Rectref}}}
\put(400,13.3333){\scalebox{.1666}{\usebox{\Rectref}}}
\put(420,13.3333){\scalebox{.1666}{\usebox{\Rectref}}}
\put(440,13.3333){\scalebox{.1666}{\usebox{\Rectref}}}
 \put(460,13.3333){\scalebox{.1666}{\usebox{\Rectref}}}
\put(480,13.3333){\scalebox{.1666}{\usebox{\Rectref}}}
 \put(380,26.6666){\scalebox{.1666}{\usebox{\Rectref}}}
\put(400,26.6666){\scalebox{.1666}{\usebox{\Rectref}}}
\put(420,26.6666){\scalebox{.1666}{\usebox{\Rectref}}}
\put(440,26.6666){\scalebox{.1666}{\usebox{\Rectref}}}
 \put(460,26.6666){\scalebox{.1666}{\usebox{\Rectref}}}
\put(480,26.6666){\scalebox{.1666}{\usebox{\Rectref}}}
 \put(380,40){\scalebox{.1666}{\usebox{\Rectref}}}
\put(400,40){\scalebox{.1666}{\usebox{\Rectref}}}
\put(420,40){\scalebox{.1666}{\usebox{\Rectref}}}
\put(440,40){\scalebox{.1666}{\usebox{\Rectref}}}
 \put(460,40){\scalebox{.1666}{\usebox{\Rectref}}}
\put(480,40){\scalebox{.1666}{\usebox{\Rectref}}}
 \put(380,53.3333){\scalebox{.1666}{\usebox{\Rectref}}}
\put(400,53.3333){\scalebox{.1666}{\usebox{\Rectref}}}
\put(420,53.3333){\scalebox{.1666}{\usebox{\Rectref}}}
\put(440,53.3333){\scalebox{.1666}{\usebox{\Rectref}}}
 \put(460,53.3333){\scalebox{.1666}{\usebox{\Rectref}}}
\put(480,53.3333){\scalebox{.1666}{\usebox{\Rectref}}}
 \put(380,66.6666){\scalebox{.1666}{\usebox{\Rectref}}}
\put(400,66.6666){\scalebox{.1666}{\usebox{\Rectref}}}
\put(420,66.6666){\scalebox{.1666}{\usebox{\Rectref}}}
\put(440,66.6666){\scalebox{.1666}{\usebox{\Rectref}}}
 \put(460,66.6666){\scalebox{.1666}{\usebox{\Rectref}}}
\put(480,66.6666){\scalebox{.1666}{\usebox{\Rectref}}}
\put(40,85){$N=4$}%
\put(230,85){$N=5$}%
\put(420,85){$N=6$}%
\linethickness{2pt}%
{\color{blue}%

\Dashline(480,0)(500,0){2}%
\Dashline(500,0)(500,13.3333){2}%
\Dashline(500,13.3333)(480,13.3333){2}%
\Dashline(480,13.3333)(480,0){2}%
}

\end{picture}
\end{center}
\caption{The sensor supports as in~\eqref{mxe-hd_suppsensors}. Case~$\Omega^\times\subset\bbR^d$, $d=2$.}
\label{fig.suppsensors-supseq}
\end{figure}
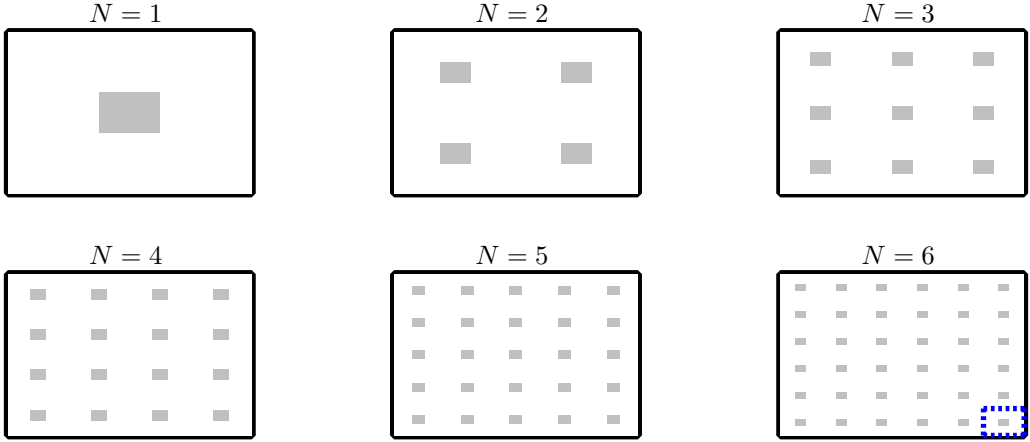

The proof in~\cite[Sect.~4]{Rod20-sicon}
takes the case of~$N=1$, corresponding to~$1$ sensor, as a reference  and
is based on the observation that the positioning of the actuators in~\eqref{mxe-hd}
gives us a partition of~$\Omega^\times=\bigcup_{\bfi\in\widehat N^d}\fkR_\bfi$,
$\widehat N=\{1,2,\dots,N\}$ into rectangles~$\fkR_\bfi$ which
are rescaled copies of the rectangle corresponding to the case of~$1^d=1$
sensor~$1_{\omega^\times}=1_{\omega^\times_{(1,1,\dots,1),1}}$,
with the rescaling factor~$N^{-1}$; see one of these copies highlighted, in
Figure~\ref{fig.suppsensors-supseq},
at the bottom-right corner of the case~$N=6$. Then, the Poincar\'e constant
in~\eqref{feedtriple-PoincareVH} is shown to satisfy, for $N>1$,
\begin{subequations}\label{PoincVH}
\begin{equation}\label{PoincVH1}
\inf_{Q\in(V\bigcap\clO_{N^d}^\perp)\setminus\{0\}}\tfrac{\norm{Q}{V}^2}{\norm{Q}{H}^2}
\ge \left(\nu N^{2}D_0C_0 +1\right),
\quad D_0\coloneqq\inf_{Q\in(V\bigcap\clO_{1}^\perp)\setminus\{0\}}\tfrac{\norm{Q}{V}^2}{\norm{Q}{H}^2},
\end{equation}
where~$D_0$ is the Poincar\'e constant in~\eqref{feedtriple-PoincareVH}, in~$\Omega^\times$,
for the case of~$1$ sensor.
Further~$C_0$ is a constant satisfying, in the case~$N=1$,
\begin{equation}\label{PoincVH2}
 C_0\norm{h}{V}^2\le\norm{\nabla_x (h)}{L^2(\Omega^\times)^d}^2
 +\norm{(h,1_{\omega^\times})}{\bbR}^2,\quad\mbox{for all}\quad h\in H^1(\Omega).
\end{equation}
\end{subequations}

\subsection{Satisfiability of Assumption~\ref{A:Poincare}}
We have mentioned that the proof in~\cite[Sect.~4]{Rod20-sicon} uses the case~$N=1$
as a reference to derive~\eqref{PoincVH2}. Here we use the case~$S=1$, corresponding to~$2^d$ sensors,
as a reference to derive the analogous estimate required in Assumption~\ref{A:Poincare}.
\begin{lemma}\label{L:Poinc1}
For~$S=1$
we have an analogous version of~\eqref{PoincVH2} as
\begin{equation}\label{PoincDAV2}
 C_0\norm{h}{\rmD(A)}^2\le\norm{\nabla_x^2 h}{L^2(\Omega^\times)^{d^2}}^{2}
 +\textstyle\sum\limits_{\bfj\in \{1,2\}^d}\norm{(h,1_{\omega_{\bfj,1}^\times})}{\bbR}^{2},
 \quad\mbox{for all}\quad h\in H^2(\Omega),
\end{equation}
where~$\{\omega_{\bfj,1}^\times\mid \bfj\in \{1,2\}^d\}=\{\omega_{i,S}\mid i\in \{1,2,\dots,2^d\}\}$.
\end{lemma}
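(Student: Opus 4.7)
The plan is to argue by contradiction and compactness, mirroring the strategy used to obtain the first-order Poincar\'e inequality~\eqref{PoincVH2} in~\cite{Rod20-sicon}, but lifted one order of differentiability. Suppose no such $C_0>0$ exists. Then there is a sequence $h_n\in H^2(\Omega^\times)$ with $\norm{h_n}{\rmD(A)}^2=1$ and
\begin{align*}
\norm{\nabla_x^2 h_n}{L^2(\Omega^\times)^{d^2}}^2+\sum_{\bfj\in\{1,2\}^d}\norm{(h_n,1_{\omega_{\bfj,1}^\times})}{\bbR}^2<\tfrac1n.
\end{align*}
From $\norm{h_n}{\rmD(A)}^2=\norm{-\nu\Delta h_n+h_n}{L^2}^2=1$ and $\norm{\Delta h_n}{L^2}\le\sqrt{d}\,\norm{\nabla_x^2 h_n}{L^2}\to0$, the triangle inequality gives $\norm{h_n}{L^2}\to1$; a Gagliardo--Nirenberg interpolation on the bounded Lipschitz domain $\Omega^\times$ then yields $\norm{\nabla h_n}{L^2}^2\le C(\norm{h_n}{L^2}\norm{\nabla_x^2 h_n}{L^2}+\norm{h_n}{L^2}^2)$, so $(h_n)$ is uniformly bounded in $H^2(\Omega^\times)$.

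By the compact embedding $H^2(\Omega^\times)\xhookrightarrow{\rm c} L^2(\Omega^\times)$ and weak compactness in $H^2$, a subsequence (not relabeled) satisfies $h_n\rightharpoonup h_\infty$ in $H^2(\Omega^\times)$ and $h_n\to h_\infty$ in $L^2(\Omega^\times)$. Weak lower semicontinuity applied to each entry of the Hessian gives $\nabla_x^2 h_\infty=0$ a.e., hence $h_\infty$ is affine: $h_\infty(x)=a+b\cdot x$ with $(a,b)\in\bbR\times\bbR^d$. Strong $L^2$ convergence together with the assumed decay of the measurements gives $(h_\infty,1_{\omega_{\bfj,1}^\times})=0$ for every $\bfj\in\{1,2\}^d$.

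The crux is then to verify that these $2^d$ vanishing-average conditions force $a=0$ and $b=0$. By~\eqref{mxe-hd_suppsensors} with $S=1$, each $\omega_{\bfj,1}^\times$ is a rectangle centred at a point $c_\bfj\in\bbR^d$ whose $i$-th coordinate lies in $\{L_i/4,3L_i/4\}$; for affine $h_\infty$ we have $(h_\infty,1_{\omega_{\bfj,1}^\times})=\vol(\omega_{\bfj,1}^\times)(a+b\cdot c_\bfj)$. Subtracting the conditions for two indices differing only in the $i$-th entry yields $b_iL_i/2=0$, so $b=0$, and substituting back gives $a=0$. Hence $h_\infty\equiv0$, so $h_n\to0$ in $L^2$, and combined with $\norm{\Delta h_n}{L^2}\to0$ this forces $\norm{h_n}{\rmD(A)}^2=\norm{-\nu\Delta h_n+h_n}{L^2}^2\to0$, contradicting $\norm{h_n}{\rmD(A)}^2=1$.

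The main obstacle is the uniform $H^2$ bound, which requires a second-order Gagliardo--Nirenberg-type interpolation on $\Omega^\times$; on a bounded Lipschitz (in particular rectangular) domain this is standard and can alternatively be obtained by reflection/extension to $\bbR^d$. All remaining steps reduce to standard weak-compactness arguments and an elementary linear-algebra computation on the finite-dimensional space of affine functions.
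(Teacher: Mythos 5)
Your argument is correct, but it takes a genuinely different route from the paper. The paper first establishes a norm equivalence on $H^2(\Omega^\times)$ (Corollary~\ref{C:normH2}): it invokes the Ne\v{c}as quotient theorem (Lemma~\ref{L:normH2quot}), which says the Hessian seminorm controls the full $H^2$ norm on the orthogonal complement of the affine polynomials $\bbP_{\times,1}$, and combines it with Proposition~\ref{P:normPoly}, which shows that the $d+1$ averages indexed by $\bfJ_{d,2}$ already form a norm on $\bbP_{\times,1}$; Lemma~\ref{L:Poinc1} then follows by discarding the remaining $2^d-(d+1)$ measurement terms and converting $\norm{\Bigcdot}{H^2}$ into $\norm{\Bigcdot}{\rmD(A)}$. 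You instead run a normalize--extract--contradict compactness argument directly on the claimed inequality. The finite-dimensional crux is the same in both proofs --- an affine function whose averages over the sensor rectangles all vanish must be zero, which you verify by the centroid computation $(h_\infty,1_{\omega})=\vol(\omega)\,(a+b\cdot c_{\omega})$ and differencing along each coordinate --- but you use all $2^d$ conditions where the paper isolates the $d+1$ that suffice. What the paper's route buys is the reusable norm equivalence and that sharper count of needed functionals; what yours buys is self-containedness (no appeal to the Ne\v{c}as reference), at the price of having to justify the uniform $H^2$ bound, which as you note follows from a standard interpolation (or simply Ehrling's lemma, $\norm{\nabla h}{L^2}\le\e\norm{h}{H^2}+C_\e\norm{h}{L^2}$, which avoids the product-form Gagliardo--Nirenberg inequality altogether). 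Both arguments ultimately rest on compactness and yield a non-explicit constant $C_0$. One small point worth stating explicitly in your write-up: in the contradiction setup the chosen $h_n$ automatically satisfy $\norm{h_n}{\rmD(A)}>0$, so the normalization is legitimate even though $\norm{-\nu\Delta h+h}{L^2}$ is only a seminorm on all of $H^2(\Omega^\times)$ absent boundary conditions.
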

For the proof we will need some auxiliary results.

Note that, the number of sensors is given by~$S_\sigma=(2S)^d$, thus~$2^d$ for~$S=1$.

Above,~$\nabla_x^2$ stands for second order derivatives,
\begin{align}
\norm{\nabla_x^2 h}{L^2(\Omega^\times)^{d^2}}
&\coloneqq\left(\textstyle\sum\limits_{\bfk\in K_{d,2}}\norm{\frac{\p^{\bfk_1}\p^{\bfk_2}
\dots\p^{\bfk_d} h}{\p x_1^{\bfk_1}\p x_2^{\bfk_2}\dots\p x_d^{\bfk_d}}}{L^2(\Omega^\times)}^2\right)^\frac{1}{2},\notag\\
K_{d,2}&\coloneqq\{\bfk\in\{0,1,2\}^d\mid\textstyle\sum\limits_{s=1}^d\bfk_s=2\}.\notag
\end{align}

Note that the locations as in~\eqref{mxe-hd_suppsensors} induce a
partition of~$\Omega^\times$ with $S^d$ rescaled copies of the case~$S=1$.
See Figure~\ref{fig.suppsensors}, case~$S=3$, where a rescaled copy of the case~$S=1$ is highlighted
at the bottom-right corner.

The following lemma can be found in~\cite[Ch.~1, Sect.~1.7, Thm.~1.6]{Necas67},
written in a slightly different way.
\begin{lemma}\label{L:normH2quot}
 Let $\bbP_{\times,1}\coloneqq\left\{c_0+\sum\limits_{j=1}^d a_jx_j\mid c_0\in\bbR,
 a=(a_1,a_2,\dots,a_d)\in\bbR^d\right\}$ be the set of
 polynomials of degree at most~$1$ defined in~$\Omega^\times$, and consider its orthogonal
 in~$H^2(\Omega^\times)$, $\bbP_{\times,1}^{\perp,H^2}\coloneqq
 \left\{h\in H^2(\Omega^\times)\mid (h,p)_{H^2(\Omega^\times)}=0,\mbox{ for all }
 p\in\bbP_{\times,1}\right\}$. Then
 there exists a constant~$C>0$ such that
 \[
  \norm{h}{H^2(\Omega^\times)}^2\le C\norm{\nabla_x^2 h}{L^2(\Omega^\times)^{d^2}}^{2},
  \quad\mbox{for all}\quad h\in\bbP_{\times,1}^{\perp,H^2}.
 \]
\end{lemma}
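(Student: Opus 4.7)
My plan is to prove Lemma~\ref{L:normH2quot} by the standard compactness–contradiction argument (a Peetre–Tartar type lemma), using that second derivatives control the full $H^2$ norm modulo the kernel $\bbP_{\times,1}$ of $\nabla_x^2$.

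Suppose, toward a contradiction, that no such constant $C$ exists. Then I can extract a sequence $(h_n)_{n\in\bbN_0}\subset\bbP_{\times,1}^{\perp,H^2}$ normalized by $\norm{h_n}{H^2(\Omega^\times)}=1$ and satisfying $\norm{\nabla_x^2 h_n}{L^2(\Omega^\times)^{d^2}}\to 0$. Since $(h_n)$ is bounded in $H^2(\Omega^\times)$ and $\Omega^\times$ is a bounded Lipschitz (rectangular) domain, Rellich--Kondrachov gives compactness of the inclusion $H^2(\Omega^\times)\xhookrightarrow{\rm c}H^1(\Omega^\times)$, so along a subsequence (not relabeled) $h_n\rightharpoonup h$ weakly in $H^2(\Omega^\times)$ and $h_n\to h$ strongly in $H^1(\Omega^\times)$ for some limit $h\in H^2(\Omega^\times)$.

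Next I would identify the limit. Strong convergence $\nabla_x^2 h_n\to 0$ in $L^2(\Omega^\times)^{d^2}$ together with weak convergence in $H^2$ forces $\nabla_x^2 h=0$ in the sense of distributions, hence $h\in\bbP_{\times,1}$. Moreover, combining the strong $H^1$ convergence of $h_n$ with the strong $L^2$ convergence of $\nabla_x^2 h_n$, the sequence actually converges strongly in $H^2(\Omega^\times)$, so $\norm{h}{H^2(\Omega^\times)}=1$. On the other hand, $\bbP_{\times,1}^{\perp,H^2}$ is a closed linear subspace of $H^2(\Omega^\times)$ (it is the orthogonal complement of a finite-dimensional subspace), hence weakly closed; therefore $h\in\bbP_{\times,1}^{\perp,H^2}$. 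Combining $h\in\bbP_{\times,1}\cap\bbP_{\times,1}^{\perp,H^2}=\{0\}$ with $\norm{h}{H^2(\Omega^\times)}=1$ yields a contradiction.

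The only mildly delicate point is upgrading $H^1$-convergence to $H^2$-convergence, which follows from the identity $\norm{h_n-h_m}{H^2}^2=\norm{h_n-h_m}{H^1}^2+\norm{\nabla_x^2(h_n-h_m)}{L^2}^2$ (so $(h_n)$ is Cauchy in $H^2$) and from the fact that $\nabla_x^2$ is a closed operator on $L^2(\Omega^\times)$ with natural domain $H^2(\Omega^\times)$; no genuine obstacle arises since $\Omega^\times$ is a product of intervals and the finite-dimensionality of $\bbP_{\times,1}$ makes its orthogonal complement closed. The entire argument is purely functional-analytic and uses nothing beyond Rellich compactness on the rectangle $\Omega^\times$.
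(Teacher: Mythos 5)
Your argument is correct. Note that the paper does not actually prove this lemma: it simply cites Ne\v{c}as \cite[Ch.~1, Sect.~1.7, Thm.~1.6]{Necas67}, where the statement appears (in slightly different form) as part of the standard theory of equivalent norms on Sobolev spaces modulo polynomials. Your compactness--contradiction argument is the classical Peetre--Tartar/Deny--Lions proof of exactly that result, and every step checks out for the rectangle $\Omega^\times$: the domain is bounded, Lipschitz and connected, so Rellich gives $H^2(\Omega^\times)\xhookrightarrow{\rm c}H^1(\Omega^\times)$, the vanishing of all second distributional derivatives on a connected open set forces the limit into $\bbP_{\times,1}$, the identity $\norm{h_n-h_m}{H^2}^2=\norm{h_n-h_m}{H^1}^2+\norm{\nabla_x^2(h_n-h_m)}{L^2}^2$ upgrades the convergence to strong $H^2$ convergence so the limit has unit norm, and $\bbP_{\times,1}^{\perp,H^2}$ is closed as the orthogonal complement of a (finite-dimensional) subspace. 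What your route buys is a self-contained proof in place of an external citation; what it costs is only that the constant $C$ is non-constructive, which is irrelevant for the way the lemma is used in Corollary~\ref{C:normH2} and Lemma~\ref{L:Poinc1}.
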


\begin{proposition}\label{P:normPoly}
Let~$\bfJ_{d,2}\coloneqq\{\bfj\in\{1,2\}^d\mid\textstyle\sum_{j=1}^d\bfj_j\le d+1\}$.
Then, the seminorm~$\fkS(\Bigcdot)\coloneqq\left(\textstyle\sum\limits_{\bfj\in \bfJ_{d,2}}
\norm{(\Bigcdot,1_{\omega_{\bfj,1}^\times})}{\bbR}^{2}\right)^\frac12$ is a norm
in~$\bbP_{\times,1}$.
\end{proposition}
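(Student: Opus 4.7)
The plan is to exploit the fact that $\bbP_{\times,1}$ has dimension $d+1$, which matches $|\bfJ_{d,2}|$, so the result reduces to a rank statement for $d+1$ linear functionals on a $(d+1)$-dimensional space. Indeed, the only elements of $\bfJ_{d,2}$ are the ``base'' multi-index $\mathbf{1}\coloneqq(1,1,\dots,1)$ (with coordinate sum $d$) and, for each $k\in\{1,\dots,d\}$, the multi-index $\mathbf{1}+\mathbf{e}_k$ obtained by changing the $k$-th entry from $1$ to $2$ (coordinate sum $d+1$); any other $\bfj\in\{1,2\}^d$ has coordinate sum at least $d+2$. Hence $|\bfJ_{d,2}|=d+1=\dim\bbP_{\times,1}$.

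Since $\fkS$ is clearly a seminorm, it suffices to prove definiteness: if $p\in\bbP_{\times,1}$ satisfies $(p,1_{\omega_{\bfj,1}^\times})_{\bbR}=0$ for every $\bfj\in\bfJ_{d,2}$, then $p=0$. The key observation is that the rectangles $\omega_{\bfj,1}^\times$ all have the same volume $V=\prod_{k=1}^d\tfrac{rL_k}{2}$, and for any affine function $p(x)=c_0+\sum_{k=1}^d a_kx_k$ one has the exact quadrature
\[
 \int_{\omega_{\bfj,1}^\times} p(x)\,\ed x = V\,p(\bfc(\bfj)),
\]
where $\bfc(\bfj)$ denotes the geometric center of $\omega_{\bfj,1}^\times$. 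A direct computation from~\eqref{mxe-hd_suppsensors} with $S=1$ gives
\[
\bfc(\bfj)_k=\tfrac{(2\bfj_k-1)L_k}{4},\qquad \bfj_k\in\{1,2\},\quad 1\le k\le d.
\]
Thus vanishing of $\fkS(p)$ is equivalent to $p$ vanishing at the $d+1$ points $\bfc(\bfj)$, $\bfj\in\bfJ_{d,2}$.

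It remains to verify that these $d+1$ points are affinely independent. Taking $\bfc(\mathbf{1})=(L_1/4,\dots,L_d/4)$ as the base point, the displacements to the other $d$ centers are
\[
 \bfc(\mathbf{1}+\mathbf{e}_k)-\bfc(\mathbf{1}) = \tfrac{L_k}{2}\,e_k^{\rm std},\qquad k=1,\dots,d,
\]
which are $d$ nonzero multiples of the standard basis vectors of $\bbR^d$ and hence linearly independent. Therefore $\{\bfc(\bfj)\mid\bfj\in\bfJ_{d,2}\}$ is the vertex set of a (nondegenerate) $d$-simplex, and any affine function vanishing on all $d+1$ vertices must be identically zero. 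This gives $p\equiv 0$ and proves that $\fkS$ is a norm on $\bbP_{\times,1}$.

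There is no real obstacle: the argument is essentially a dimension/affine-independence count, the only mildly subtle point being the correct identification of the centers $\bfc(\bfj)$ from the corner formula in~\eqref{mxe-hd_suppsensors} and the observation that $\bfJ_{d,2}$ selects precisely the $d+1$ multi-indices giving a simplex rather than the full hypercube of $2^d$ sensor centers.
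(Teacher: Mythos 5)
Your proof is correct, and it reaches the conclusion by a slightly different mechanism than the paper. The paper works directly with the integrals: for $p=c_0+\fkL_a x$ it compares $\int_{\omega_*}p(x)\,\ed x$ with the integral over each translated rectangle, written as $\int_{\omega_*}p(x-\tau^i)\,\ed x$, and subtracts to isolate $\int_{\omega_*}\fkL_a\tau^i\,\ed x=a_i\vol(\omega_*)=0$, whence $a_i=0$ and then $c_0=0$. You instead invoke the exactness of the midpoint quadrature for affine functions to convert each functional $(\Bigcdot,1_{\omega_{\bfj,1}^\times})$ into $V\,p(\bfc(\bfj))$, reducing the claim to the affine independence of the $d+1$ centers $\bfc(\bfj)$, which you verify from the corner formula in~\eqref{mxe-hd_suppsensors}. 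The two arguments are equivalent at heart --- both rest on the observation that the $d+1$ elements of $\bfJ_{d,2}$ are $\mathbf{1}^d$ together with its single-coordinate perturbations, so that pairwise comparison isolates one linear coefficient at a time --- but your formulation is somewhat more conceptual: it makes explicit that the obstruction is a nondegenerate $d$-simplex of evaluation points, and it would generalize immediately to any family of congruent axis-aligned boxes whose centers are affinely independent, whereas the paper's computation is tied to the specific translation structure. Your identification of the centers $\bfc(\bfj)_k=\tfrac{(2\bfj_k-1)L_k}{4}$ and of the displacements $\tfrac{L_k}{2}e_k^{\rm std}$ is consistent with~\eqref{mxe-hd_suppsensors} at $S=1$, so no gap remains.
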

The proof is given in Section~\ref{sS:proofP:normPoly}. Note that~$\bfJ_{d,2}\subset\bbS^d$ has
cardinality~$\#\bfJ_{d,2}=d+1=\dim \bbP_{\times,1}$.

\begin{corollary}\label{C:normH2}
 The usual norm~$\norm{\Bigcdot}{H^2(\Omega^\times)}$, in~$H^2(\Omega^\times)$, is equivalent to the norm
 \[
 \left(\norm{\nabla_x^2 \Bigcdot}{L^2(\Omega^\times)^{d^2}}^2
 +\textstyle\sum\limits_{\bfj\in \bfJ_{d,2}}
 \norm{(\Bigcdot,1_{\omega_{\bfj,1}^\times})_{L^2(\Omega^\times)}}{\bbR}^{2}\right)^\frac12.
 \]
\end{corollary}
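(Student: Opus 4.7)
The plan is to combine the orthogonal decomposition $H^2(\Omega^\times)=\bbP_{\times,1}\oplus\bbP_{\times,1}^{\perp,H^2}$ with the two preceding results. Given $h\in H^2(\Omega^\times)$, I write $h=p+q$ with $p\in\bbP_{\times,1}$ and $q\in\bbP_{\times,1}^{\perp,H^2}$, and I denote the candidate norm by
\[
\fkN(h)^2\coloneqq\norm{\nabla_x^2 h}{L^2(\Omega^\times)^{d^2}}^2
+\textstyle\sum\limits_{\bfj\in \bfJ_{d,2}}\norm{(h,1_{\omega_{\bfj,1}^\times})_{L^2(\Omega^\times)}}{\bbR}^{2}.
\]

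The easy direction $\fkN(h)\le C\norm{h}{H^2(\Omega^\times)}$ is immediate: $\norm{\nabla_x^2 h}{L^2}\le\norm{h}{H^2}$ by definition, and the Cauchy--Schwarz inequality gives $|(h,1_{\omega_{\bfj,1}^\times})|\le\vol(\omega_{\bfj,1}^\times)^{1/2}\norm{h}{L^2(\Omega^\times)}\le C\norm{h}{H^2(\Omega^\times)}$ for each of the finitely many indices $\bfj\in\bfJ_{d,2}$.

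For the reverse direction, I would first observe that $\nabla_x^2 p=0$ since $p$ has degree at most one, so $\nabla_x^2 h=\nabla_x^2 q$. Then Lemma~\ref{L:normH2quot} applied to $q\in\bbP_{\times,1}^{\perp,H^2}$ yields
\[
\norm{q}{H^2(\Omega^\times)}^2\le C\norm{\nabla_x^2 q}{L^2(\Omega^\times)^{d^2}}^2=C\norm{\nabla_x^2 h}{L^2(\Omega^\times)^{d^2}}^2.
\]
Next, for each $\bfj\in\bfJ_{d,2}$ I write $(p,1_{\omega_{\bfj,1}^\times})=(h,1_{\omega_{\bfj,1}^\times})-(q,1_{\omega_{\bfj,1}^\times})$ and bound the second summand by $C\norm{q}{H^2(\Omega^\times)}$ as above. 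Squaring and summing over the finite set $\bfJ_{d,2}$ gives $\fkS(p)^2\le 2\sum_{\bfj\in\bfJ_{d,2}}|(h,1_{\omega_{\bfj,1}^\times})|^2+C\norm{q}{H^2(\Omega^\times)}^2$. Since $\fkS$ is a norm on the finite-dimensional space $\bbP_{\times,1}$ by Proposition~\ref{P:normPoly}, it is equivalent to $\norm{\Bigcdot}{H^2(\Omega^\times)}$ on $\bbP_{\times,1}$, yielding $\norm{p}{H^2(\Omega^\times)}^2\le C'\fkS(p)^2$.

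Combining the two bounds through $\norm{h}{H^2}^2\le 2(\norm{p}{H^2}^2+\norm{q}{H^2}^2)$ and absorbing the $\norm{q}{H^2}^2$ into the $\norm{\nabla_x^2 h}{L^2}^2$ contribution closes the estimate $\norm{h}{H^2(\Omega^\times)}^2\le C''\fkN(h)^2$. I do not expect a genuine obstacle here: the only subtle point is the correct use of the $H^2$-orthogonal splitting so that Lemma~\ref{L:normH2quot} applies to $q$ exactly, and keeping track of the fact that $\fkS$ uses the sub-family indexed by $\bfJ_{d,2}$ (cardinality $d+1=\dim\bbP_{\times,1}$), which is precisely what Proposition~\ref{P:normPoly} was formulated for.
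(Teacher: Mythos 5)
Your proof is correct and follows essentially the same route the paper indicates: the $H^2$-orthogonal splitting $H^2(\Omega^\times)=\bbP_{\times,1}^{\perp,H^2}\oplus\bbP_{\times,1}$, Lemma~\ref{L:normH2quot} on the complement, and Proposition~\ref{P:normPoly} together with equivalence of norms on the finite-dimensional polynomial space. The paper merely defers the details to N\v{e}\v{c}as; you have written out exactly the standard argument it points to.
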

\begin{proof} The proof is standard and can be done by repeating the arguments
from the proofs in~\cite[Ch.~1, Sect.~1.7, Thms.~1.8 and~1.10]{Necas67}.
Indeed, it is enough to observe that we have~$H^2(\Omega^\times)
=\bbP_{\times,1}^{\perp,H^2}\oplus\bbP_{\times,1}$,
and use Proposition~\ref{P:normPoly}.
\end{proof}

\begin{proof}[Proof of Lemma~\ref{L:Poinc1}]
Let~$h\in H^2(\Omega^\times)$. By Corollary~\ref{C:normH2} there exists~$C_1>0$ such
that
\begin{align}
 \norm{\nabla_x^2 h}{L^2(\Omega^\times)^{d^2}}^{2}
 +\textstyle\sum\limits_{\bfj\in \{1,2\}^2}\norm{(h,1_{\omega_{\bfj,1}^\times})}{\bbR}^{2}
 &\ge
 \norm{\nabla_x^2 h}{L^2(\Omega^\times)^{d^2}}^2
 +\textstyle\sum\limits_{\bfj\in \bfJ_{d,2}}\norm{(h,1_{\omega_{\bfj,1}^\times})}{\bbR}^{2}\notag\\
 &\ge C_1\norm{h}{H^2(\Omega^\times)}^2\ge C_1C_2\norm{h}{\rmD(A)}^2,\notag
 \end{align}
 where~$C_2$ is a constant satisfying~$\norm{h}{\rmD(A)}^2\le C_2^{-1}\norm{h}{H^2(\Omega^\times)}^2$.
 That is, we may take~$C_0=C_1C_2$ in~\eqref{PoincDAV2}.
 Note that
 such~$C_2$ can be found as~$\norm{h}{\rmD(A)}^2=\norm{-\nu\Delta h+h}{L^2(\Omega^\times)}^2
 \le2(\norm{-\nu\Delta h}{L^2(\Omega^\times)}^2
 +\norm{h}{L^2(\Omega^\times)}^2)\le 2(\nu^2d\norm{h}{H^2(\Omega^\times)}^2
  +\norm{h}{H^2(\Omega^\times)}^2)$, that is, we may take~$C_2^{-1}=2(\nu^2d+1)$. 
\end{proof}

Proceeding as in~\cite[Sect.~4]{Rod20-sicon}, we observe that for a suitable
translation~$\fkT_\bfi$, the injective affine transformation
\begin{align}
 \varPhi_\bfi\colon\Omega^{\times}\to \fkR_\bfi,\qquad x\mapsto z^\bfi\coloneqq \frac{x}{S}+ \fkT_\bfi,
 \qquad\bfi\in\widehat\bbS^d,\notag
\end{align} 
maps~$\Omega^{\times}$ onto~$\fkR_\bfi$, and  the sensor
regions~$\{{\omega^\times_{\bfj,1}}\mid\bfj\in\{1,2\}^d\}$ onto rescaled sensor regions
~$\{\omega_{\bfi(\bfj),S}^{\times}\subset\fkR_\bfi\mid\bfj\in\{1,2\}^d\}$
in the corresponding copy~$\fkR_\bfi$.
 \begin{align}
 \varPhi_\bfi(\Omega^{\times})=\fkR_\bfi,\qquad \varPhi(\omega_{\bfj,S}^{\times})
 =\omega_{\bfi(\bfj),S}^{\times}.\notag
\end{align}

 From~$\ed x_n=S\ed z^\bfi_n$ and~$\tfrac{\p }{\p x_n}
 =\frac{1}{S}\tfrac{\p }{\p z^\bfi_n}$,
 for~$\bfk\in K_{d,2}$ we have~$\tfrac{\p^{\bfk_1}\p^{\bfk_2}
\dots\p^{\bfk_d} Q(x)}{\p x_1^{\bfk_1}\p x_2^{\bfk_2}\dots\p x_d^{\bfk_d}}
=\frac{1}{S^2}\tfrac{\p^{\bfk_1}\p^{\bfk_2}
\dots\p^{\bfk_d} Q(z)}{\p {z_1^\bfi}^{\bfk_1}\p {z_2^\bfi}^{\bfk_2}\dots\p {z_d^\bfi}^{\bfk_d}}$.
Further for~$Q\in\rmD(A)\bigcap\clO_{2^d}^\perp$ we find
\begin{align}
 &\int_{\Omega^\times}\sum_{\bfk\in K_{d,2}}^d(\tfrac{\p^{\bfk_1}\p^{\bfk_2}
\dots\p^{\bfk_d} Q(x)}{\p x_1^{\bfk_1}\p x_2^{\bfk_2}\dots\p x_d^{\bfk_d}})^2\,\ed x
 =\int_{\fkR_\bfi}\sum_{\bfk\in K_{d,2}}^d(\tfrac{1}{S})^4(\tfrac{\p^{\bfk_1}\p^{\bfk_2}
\dots\p^{\bfk_d} Q(z)}{\p {z_1^\bfi}^{\bfk_1}\p {z_2^\bfi}^{\bfk_2}\dots\p {z_d^\bfi}^{\bfk_d}})^2
\,S^d\ed z^\bfi,\notag\\
 &\int_{\Omega^\times} Q(x)^2\,\ed x
 =\int_{\fkR_\bfi}Q(\varPhi_\bfi^{-1}(z))^2\,S^d\ed z^\bfi,\notag\\
 &\int_{\omega} g(x)Q(x)\,\ed x
 =\int_{\varPhi(\omega)}g(\varPhi_\bfi^{-1}(z))Q(\varPhi_\bfi^{-1}(z))\,S^d\ed z^\bfi,
 \mbox{ for all }g\in L^2(\omega),\quad\!\omega\subseteq\Omega^\times.\notag
\end{align}
which give us, 
\begin{align}
 \norm{\nabla_x^2 (Q)}{L^2(\Omega^\times)^{2d}}^2
 &=S^{d-4}\norm{\nabla_z^2 Q\circ\varPhi_\bfi^{-1}}{L^2(\fkR_\bfi)^d}^2,\notag\\
 \norm{Q}{L^2(\Omega^\times)}^2&=S^{d}\norm{Q\circ\varPhi_\bfi^{-1}}{L^2(\fkR_\bfi)}^2,\notag\\
 \norm{Q}{L^2({\omega})}^2&=S^{d}\norm{Q\circ\varPhi_\bfi^{-1}}{L^2(\varPhi_\bfi(\omega))}^2.\notag
 \end{align}
Further, denoting~$[\omega_1^\times]_2\coloneqq\{\indf_{\omega^\times_{\bfj,1}}\mid\bfj\in\{1,2\}^d\}$
and~$[\omega_{\bfi,S}^\times]_2\coloneqq\{\indf_{\omega^\times_{\bfi(\bfj),S}}\mid\bfj\in\{1,2\}^d\}$
and, choosing~$g\in[\omega_1^\times]_2$, we also find 
\[
Q\circ\varPhi_\bfi^{-1}\in[\omega^\times]_2^\perp\quad\Longleftrightarrow
\quad Q\in [\omega_{\bfi,S}^\times]_2^\perp.
\]

\begin{lemma}\label{L:Poinc2}
 For a suitable constant~$C>0$, we have
\begin{equation}\label{PoincDAV1}
\inf_{Q\in(\rmD(A)\bigcap\clO_{(2 S)^d}^\perp)\setminus\{0\}}\tfrac{\norm{Q}{\rmD(A)}^2}{\norm{Q}{V}^2}
\ge C D_0 S^2,
\quad D_0\coloneqq\inf_{Q\in(\rmD(A)\bigcap\clO_{2^d}^\perp)\setminus\{0\}}
\tfrac{\norm{Q}{H^2(\Omega)}^2}{\norm{Q}{H^1(\Omega)}^2}.
\end{equation}
\end{lemma}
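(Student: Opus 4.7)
The plan is to reduce to the base case $S=1$ (handled by Lemma~\ref{L:Poinc1} and Corollary~\ref{C:normH2}) by a standard scaling argument, exploiting that the sensor placement~\eqref{mxe-hd_suppsensors} produces a partition of $\Omega^\times$ into congruent sub-rectangles $\fkR_\bfi$, each of which is the image of the reference $\Omega^\times$ (carrying its $2^d$ reference sensors $\{\omega_{\bfj,1}^\times\mid\bfj\in\{1,2\}^d\}$) under the affine contraction $\varPhi_\bfi$ of ratio $\widehat S^{-1}$. The change-of-variable identities for $L^2$, $\nabla$, and $\nabla^2$ listed just above the lemma will be the bridge between the two scales.

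First I would fix $Q\in(\rmD(A)\bigcap\clO_{(2 S)^d}^\perp)\setminus\{0\}$, restrict it to each piece $\fkR_\bfi$, and set $\widehat Q_\bfi\coloneqq Q\circ\varPhi_\bfi\in H^2(\Omega^\times)$. Using $\varPhi_\bfi(\omega_{\bfj,1}^\times)=\omega_{\bfi(\bfj),S}^\times$ and the Jacobian of $\varPhi_\bfi^{-1}$, a single change of variables shows
\[
\textstyle\int_{\omega_{\bfj,1}^\times}\widehat Q_\bfi(\xi)\,\ed\xi
=\widehat S^{\,d}\textstyle\int_{\omega_{\bfi(\bfj),S}^\times}Q(x)\,\ed x=0,
\qquad\bfj\in\{1,2\}^d,
\]
so every pullback lies in $H^2(\Omega^\times)\bigcap\clO_{2^d}^\perp$. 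Although $D_0$ is defined as an infimum over $\rmD(A)\bigcap\clO_{2^d}^\perp$ and the pullbacks need not satisfy the boundary conditions encoded in $\rmD(A)=H^2_\clG(\Omega^\times)$, Corollary~\ref{C:normH2} (whose statement holds on all of $H^2(\Omega^\times)$) together with the vanishing of the sensor inner products yields the scale-free bound $\norm{\widehat Q_\bfi}{H^1(\Omega^\times)}^2\le \widetilde C\, D_0^{-1}\,\norm{\widehat Q_\bfi}{H^2(\Omega^\times)}^2$ with a constant $\widetilde C$ independent of $S$.

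Next I would transplant this back to each $\fkR_\bfi$ via the listed scaling identities $\norm{\widehat Q_\bfi}{L^2(\Omega^\times)}^2=\widehat S^d\norm{Q}{L^2(\fkR_\bfi)}^2$, $\norm{\nabla\widehat Q_\bfi}{L^2(\Omega^\times)}^2=\widehat S^{d-2}\norm{\nabla Q}{L^2(\fkR_\bfi)}^2$, and $\norm{\nabla^2\widehat Q_\bfi}{L^2(\Omega^\times)}^2=\widehat S^{d-4}\norm{\nabla^2 Q}{L^2(\fkR_\bfi)}^2$. After dividing by a common factor of $\widehat S^{d-4}$, this produces
\[
\widehat S^{2}\,\norm{Q}{H^1(\fkR_\bfi)}^2\le C'D_0^{-1}\norm{Q}{H^2(\fkR_\bfi)}^2,
\]
on each piece. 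Summing over $\bfi$ and invoking the interior-disjointness of the partition $\Omega^\times=\bigcup_\bfi\fkR_\bfi$ gives $\widehat S^{2}\norm{Q}{H^1(\Omega^\times)}^2\le C'D_0^{-1}\norm{Q}{H^2(\Omega^\times)}^2$. The conclusion then follows by the norm equivalences $\norm{\Bigcdot}{V}\sim\norm{\Bigcdot}{H^1(\Omega^\times)}$ and $\norm{\Bigcdot}{\rmD(A)}\sim\norm{\Bigcdot}{H^2(\Omega^\times)}$ valid on $\rmD(A)=H^2_\clG(\Omega^\times)$ under either Dirichlet or Neumann boundary conditions (standard elliptic regularity, as already used at the end of the proof of Lemma~\ref{L:Poinc1}), together with $\widehat S^{2}=4S^{2}$, yielding $\frac{\norm{Q}{\rmD(A)}^2}{\norm{Q}{V}^2}\ge CD_0S^2$.

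The main obstacle is the boundary-condition mismatch: because the pullbacks $\widehat Q_\bfi$ generically fail the homogeneous conditions encoded in $\rmD(A)$, the constant $D_0$ cannot be invoked in its literal definition. Routing the argument through Corollary~\ref{C:normH2} instead (which is stated on all of $H^2(\Omega^\times)$) is what makes the reduction go through, and extra care is needed so that the resulting lower bound really does come out in the form $CD_0S^2$ rather than some abstract $C''S^2$ disconnected from the base-case constant; everything else is routine change-of-variables bookkeeping and summation over the $\widehat S^d$ pieces.
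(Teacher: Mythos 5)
Your proposal is correct and follows essentially the same route as the paper's proof: partition $\Omega^\times$ into the $\widehat S^d$ rescaled copies $\fkR_\bfi$, pull back to the reference domain where the $2^d$ sensor orthogonality relations are preserved, invoke the norm equivalence of Corollary~\ref{C:normH2} on the reference domain, and transplant back using the listed scaling identities before summing. Your explicit handling of the boundary-condition mismatch (routing through Corollary~\ref{C:normH2}, valid on all of $H^2(\Omega^\times)$, rather than literally invoking the infimum $D_0$ over $\rmD(A)\bigcap\clO_{2^d}^\perp$ on pullbacks that need not satisfy the boundary conditions) is in fact a touch more careful than the paper's own writeup, and is harmless since the statement only asserts the bound up to a suitable constant $C>0$.
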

\begin{proof}
For arbitrary given~$Q\in \clW_{S}^\perp\bigcap \rmD(A)$, since the
norms~$\norm{\Bigcdot}{\rmD(A)}^2$ and~$\norm{\Bigcdot}{H^2(\Omega^\times}^2$
are equivalent (for both Dirichlet and Neumann boundary conditions), and since~$\fkS(Q)=0$, we find 
\[
C_4\norm{\nabla_x^2 Q}{L^2(\Omega^\times)^d}^2\le\norm{Q}{\rmD(A)}^2
\ge C_3 \norm{\nabla_x^2 Q}{L^2(\Omega^\times)^d}^2
\]
for suitable constants~$C_3>0$, $C_4>0$. Furthermore,
\begin{align}
 \norm{Q}{\rmD(A)}^2&\ge C_3 \norm{\nabla_x^2 Q}{L^2(\Omega^\times)^d}^2
 = C_3\sum\limits_{\bfi\in\widehat \bbS^d}\norm{\nabla_x^2 Q}{L^2(\fkR_\bfi)^d}^2\notag\\
 &=C_3\sum\limits_{\bfi\in\widehat \bbS^d} S^{4-d}
 \norm{\nabla_{\varPhi_\bfi^{-1}(x)}^2 (Q\circ\varPhi_\bfi)}{L^2(\Omega^\times)^d}^2
 \ge C_3C_1S^{4-d}\sum\limits_{\bfi\in\bbS^d}\norm{Q\circ\varPhi_\bfi}{H^2(\Omega^\times)}^2\notag\\
 &\ge C_3C_1D_0S^{4-d}\sum\limits_{\bfi\in\widehat\bbS^d}
 \norm{Q\circ\varPhi_\bfi}{H^1(\Omega^\times)}^2\notag\\
 &= C_3C_1D_0S^{4-d}\sum\limits_{\bfi\in\widehat\bbS^d}
 \left(\norm{\nabla_x^1Q\circ\varPhi_\bfi}{L^2(\Omega^\times)}^2
 +\norm{Q\circ\varPhi_\bfi}{L^2(\Omega^\times)}^2\right)\notag
 \end{align}
 with~$D_0$ as in~\eqref{PoincDAV1}.
  By using the relation
 \begin{align}
  \norm{\nabla_{\varPhi_\bfi^{-1}(x)}^1 (Q)}{L^2(\Omega^\times)^{d}}^2
  &=S^{d-2}\norm{\nabla_x Q\circ\varPhi^{-1}}{L^2(\fkR_\bfi)^d}^2,\notag
 \end{align}
 which we can find in~\cite{Rod20-sicon}, we arrive at
 \begin{align}
 \norm{Q}{\rmD(A)}^2&\ge C_3C_1D_0S^{4-d}\sum\limits_{\bfi\in\widehat\bbS^d}
 \left(S^{d-2}\norm{\nabla_x^1Q}{L^2(\fkR_{\bfi})}^2+S^{d}\norm{Q}{L^2(\fkR_{\bfi})}^2\right)\notag\\
 &= C_3C_1D_0S^2\sum\limits_{\bfi\in\widehat\bbS^d}\!
 \left(\norm{\nabla_x^1Q}{L^2(\fkR_{\bfi})}^2\!+S^2\norm{Q}{L^2(\fkR_{\bfi})}^2\right)
 \ge C_3C_1D_0S^2\norm{Q}{H^1(\Omega^\times)}^2\notag\\
 &\ge C_3C_1C_5D_0S^2\norm{Q}{V}^2, \notag
 \end{align}
 which gives us~\eqref{PoincDAV1}, with~$C=C_3C_1C_5$,
 and~$C_5\coloneqq\inf\limits_{Q\in V\setminus\{0\}}\frac{\norm{Q}{H^1(\Omega^\times)}^2}{\norm{Q}{V}^2}$.
 \end{proof}

 Note that~\eqref{PoincDAV1} implies the satisfiability of Assumption~\ref{A:Poincare}.

 Note that we have proven the satisfiability of Assumption~\ref{A:Poincare} for rectangular domains.
  We end this section with the following conjecture.
  \begin{conjecture}\label{Cj:nonrect}
Assumption~\ref{A:Poincare} can be satisfied for smooth domains.
\end{conjecture}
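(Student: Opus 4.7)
The plan is to adapt the rectangular construction of Section~\ref{S:parabolic-OK} through a local partitioning argument. For a bounded smooth domain $\Omega\subset\bbR^d$, and for each $S\in\bbN_0$, I would first construct a partition $\{\fkR_{\bfi,S}\}_{\bfi\in I_S}$ of $\Omega$ into Lipschitz subdomains of diameter $O(S^{-1})$. A natural choice is to intersect $\Omega$ with a dyadic cubic grid of side-length $\diam(\Omega)/(2S)$: the cells lying entirely inside $\Omega$ are small cubes, while cells touching $\p\Omega$ give curved pieces which, by the smoothness of $\p\Omega$ and for $S$ large enough, are $C^1$-diffeomorphic to a fixed finite family of reference shapes (truncated half-cubes) with Jacobians that are uniformly controlled after rescaling. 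In each cell $\fkR_{\bfi,S}$ I would place $2^d$ sensors $\omega_{\bfi,S,j}$, each being the indicator of a subregion covering a fixed fraction of $\fkR_{\bfi,S}$, chosen so that $\#W_S=O(S^d)$ and the sensors cover at most a prescribed fraction of $\vol(\Omega)$.

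The next step is a local $H^2$ Poincar\'e estimate on a reference piece $\fkR$, analogous to Corollary~\ref{C:normH2}: for suitable constant $C>0$,
\[
C\norm{h}{H^2(\fkR)}^2\le \norm{\nabla^2 h}{L^2(\fkR)^{d\times d}}^2+\textstyle\sum\limits_{j}\norm{(h,1_{\omega_j})_{L^2(\fkR)}}{\bbR}^2,\quad h\in H^2(\fkR).
\]
This follows from the same compactness-plus-polynomial argument as in the rectangular case: the right-hand side is a norm on $H^2(\fkR)$ because $\nabla^2 h=0$ forces $h\in \bbP_{\times,1}$, and then orthogonality against a well-placed family of $d+1$ sensors forces $h=0$ via Proposition~\ref{P:normPoly}. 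Rescaling to $\fkR_{\bfi,S}$ of diameter $O(S^{-1})$ introduces a factor of $S^2$ between the $H^1$ and $H^2$ contributions, exactly as in Lemma~\ref{L:Poinc2}.

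Summing over all pieces, and using that $Q\in\clW_S^\perp$ kills every sensor term, I expect
\[
\norm{Q}{\rmD(A)}^2\gtrsim \textstyle\sum\limits_{\bfi\in I_S}\norm{\nabla^2 Q}{L^2(\fkR_{\bfi,S})}^2\gtrsim S^2\textstyle\sum\limits_{\bfi\in I_S}\norm{Q}{H^1(\fkR_{\bfi,S})}^2\gtrsim S^2\norm{Q}{V}^2,
\]
giving $\beta_{S_{\sigma+}}\gtrsim S^2\to+\infty$, which is Assumption~\ref{A:Poincare}.

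The hard part will be the uniform geometric control of the boundary pieces $\fkR_{\bfi,S}$: one must ensure that after rescaling, they form a precompact family of Lipschitz domains satisfying a uniform interior cone (or extension) condition, so that the local Poincar\'e constant does not degenerate with~$S$. This is nontrivial because, as $S$ grows, the intersection patterns of the grid with $\p\Omega$ generate infinitely many distinct shapes, and one must exploit the $C^2$ smoothness of $\p\Omega$ (via local straightening charts) to reduce the analysis to a compact family. A secondary subtlety concerns the boundary conditions encoded in~$\rmD(A)$: for Dirichlet data the cells touching $\p\Omega$ admit a sensor-free local estimate (since $Q|_{\p\Omega\cap\p\fkR_{\bfi,S}}=0$), whereas for Neumann data one must still install enough sensors in boundary cells, or absorb them via a global compactness/trace argument. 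Overcoming these two issues in a unified way, perhaps by working on a smooth extension of $\Omega$ together with a careful cut-off, is what I would expect to be the principal technical obstacle to turning this plan into a theorem.
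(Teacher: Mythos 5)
The statement you are addressing is left \emph{open} in the paper: it is stated as Conjecture~\ref{Cj:nonrect}, the author explicitly calls the satisfiability of Assumption~\ref{A:Poincare} for general smooth domains ``an open question'', and no proof is given (only the rectangular case is proved, in Lemma~\ref{L:Poinc2}). So there is no proof in the paper to compare against, and what you have written is a programme rather than a proof. Your plan is the natural one --- partition $\Omega$ into $O(S^d)$ cells, prove a local $H^2$ Poincar\'e inequality with sensor terms on a reference cell, rescale to gain the factor $S^2$, and sum --- and it faithfully mirrors the structure of the rectangular argument. But it does not close the conjecture, and you correctly identify where it breaks down.

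The genuine gap is the uniformity, in $S$ and in the cell index, of the constant in your local inequality on the boundary cells, and this gap has several concrete facets that your sketch does not resolve. First, intersecting a cubic grid with a curved boundary produces arbitrarily thin sliver cells on which the $H^2$ Poincar\'e constant degenerates; you would at least need a merging procedure attaching small boundary fragments to neighbouring interior cells, and then a uniform bound over the resulting (still non-compact, only ``precompact after straightening'') family of shapes. Second, the proof of Proposition~\ref{P:normPoly} is not a soft compactness argument: it exploits the fact that the $d+1$ reference sensor regions are exact lattice translates of one another, so that $\int_{\omega_*}p(x-\tau^i)\,\ed x-\int_{\omega_*}p(x)\,\ed x=-a_i\vol(\omega_*)$ isolates each coefficient of the affine polynomial. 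In a curved cell the sensors are no longer translates, so you must instead show that the $(d+1)\times(d+1)$ Gram-type matrix $\bigl[\int_{\omega_j}p_k\,\ed x\bigr]$ is invertible \emph{with a determinant bounded below uniformly over all rescaled boundary cells}; nothing in your sketch supplies that quantitative lower bound, and it is precisely the kind of estimate that can fail as the cell geometry degenerates. Third, your proposed Dirichlet shortcut (replacing sensors in boundary cells by the trace condition) only annihilates the affine kernel with a constant that depends on the size and curvature of the piece of $\p\Omega$ seen by the cell, which again tends to zero for small or nearly flat pieces. Until these uniformity issues are settled, the chain of inequalities $\norm{Q}{\rmD(A)}^2\gtrsim S^2\norm{Q}{V}^2$ is not established, and the statement remains, as in the paper, a conjecture.
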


An analogous conjecture has been stated in~\cite[Sect.~7.3]{Rod20-sicon}, where we can also find
arguments supporting the conjecture.

Finally, we end this section with the following result concerning Remark~\ref{R:underAlpha}.
\begin{proposition}\label{P:underAlpha}
For pairwise disjoint sensor rectangular regions~$\omega_{i,S}$ as
in~\eqref{mxe-hd_suppsensors}, and auxiliary functions
$\widetilde W_S$ as in~\eqref{choice-tilW-adhoc},
we have that, for any~$\theta\in\widetilde W_S\subset\rmD(A)$,
\begin{align}\notag
\norm{\theta}{V}^2= (C_1S^2 +1)\norm{\theta}{H}^2\quad\mbox{and}\quad
\norm{\theta}{\rmD(A)}^2=(C_2S^4 +2 C_1S^2+1)\norm{\theta}{H}^2,
 \end{align}
 with~$C_1= \tfrac{4\nu\pi^2}{3}{\textstyle\sum\limits_{i=1}^{d}}
\tfrac{1}{L_i^2}$ and~$
C_2= \tfrac{\nu^216\pi^4}{3}{\textstyle\sum\limits_{i=1}^{d}}
\tfrac{1}{L_i^2}$.
 \end{proposition}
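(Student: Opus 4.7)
The approach is to exploit the tensor-product structure
\[
\Phi_i(x)=\prod_{j=1}^d\phi_j(x_j),\qquad \phi_j(x_j)\coloneqq\sin^2\!\bigl(\pi S (x_j-p_j^{i,S})/L_j\bigr),
\]
reading~\eqref{choice-tilW-adhoc} so that the sine arguments carry the conventional factor of~$\pi$, which is needed to reproduce the $\pi^2,\pi^4$ appearing in $C_1,C_2$. By Fubini every norm in the statement factorizes, so the whole problem collapses to the three one-dimensional quantities $A_j\coloneqq\int_0^{L_j}\phi_j^2\,\ed x_j$, $B_j\coloneqq\int_0^{L_j}(\phi_j')^2\,\ed x_j$, $C_j\coloneqq\int_0^{L_j}(\phi_j'')^2\,\ed x_j$.

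First I would note that $[0,L_j]$ contains exactly $S$ full periods of $\sin^2(\pi S\,\Bigcdot\,/L_j)$, whence $A_j,B_j,C_j$ are independent of the shift $p_j^{i,S}$. A short trigonometric computation using $\sin^2 u=\tfrac{1-\cos 2u}{2}$, $(\sin^2 u)'=\sin 2u$, $(\sin^2 u)''=2\cos 2u$ together with the period-averaged values $\tfrac{1}{\pi}\int_0^\pi\sin^2=\tfrac{1}{\pi}\int_0^\pi\cos^2=\tfrac{1}{2}$ and $\tfrac{1}{\pi}\int_0^\pi\sin^4=\tfrac{3}{8}$ then yields closed-form values whose key ratio is $B_j/A_j=\tfrac{4\pi^2 S^2}{3L_j^2}$.

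Next I assemble the $d$-dimensional norms. Fubini gives $\norm{\Phi_i}{H}^2=\prod_j A_j$ and $\norm{\nabla\Phi_i}{L^2(\Omega^\times)^d}^2=\norm{\Phi_i}{H}^2\sum_j B_j/A_j$, and since $\norm{\Phi_i}{V}^2=\nu\norm{\nabla\Phi_i}{L^2}^2+\norm{\Phi_i}{H}^2$, inserting the one-dimensional ratios produces the first identity with $C_1=\tfrac{4\nu\pi^2}{3}\sum_j L_j^{-2}$. For the $\rmD(A)$-norm I would expand
\[
\norm{A\Phi_i}{H}^2=\nu^2\norm{\Delta\Phi_i}{H}^2-2\nu(\Delta\Phi_i,\Phi_i)_H+\norm{\Phi_i}{H}^2.
\]
A first integration by parts (boundary terms vanishing under the assumed Dirichlet or Neumann data) gives $-(\Delta\Phi_i,\Phi_i)_H=\norm{\nabla\Phi_i}{L^2}^2$, contributing the middle term $2C_1 S^2\norm{\Phi_i}{H}^2$. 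For $\norm{\Delta\Phi_i}{H}^2$ I would write $\Delta\Phi_i=\sum_j\phi_j''\prod_{k\ne j}\phi_k$, square, integrate, and reduce each off-diagonal factor $\int_0^{L_j}\phi_j''\phi_j\,\ed x_j$ to $-B_j$ by one further integration by parts; the resulting tensor-product polynomial in the $A_j,B_j,C_j$ delivers the $C_2 S^4$ leading term.

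The step requiring most care is justifying the boundary cancellations: one has to verify that $\Phi_i$ genuinely belongs to $\rmD(A)$ with the prescribed Dirichlet or Neumann data, so that every boundary contribution in the two levels of integration by parts vanishes. Once that is in hand, the remainder of the proof is straightforward bookkeeping of the one-dimensional trigonometric integrals organized through the tensor-product factorization.
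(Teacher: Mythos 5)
Your route is essentially the paper's: fix one basis function, exploit the tensor-product structure to reduce everything to the one-dimensional integrals $A_j=\int\phi_j^2$, $B_j=\int(\phi_j')^2$, $C_j=\int(\phi_j'')^2$, and evaluate these by elementary trigonometry. Your ratio $B_j/A_j=\tfrac{4\pi^2S^2}{3L_j^2}$ and the identity $-(\Delta\Phi_i,\Phi_i)_H=\norm{\nabla\Phi_i}{L^2(\Omega^\times)^d}^2$ reproduce exactly the paper's derivation of the $V$-norm identity and of the middle term $2C_1S^2\norm{\theta}{H}^2$. One remark on scope: the paper proves the identities for an arbitrary element of the span $\widetilde\clW_S$, not only of the set $\widetilde W_S$, using that the $\Phi_i$ are pairwise orthogonal; this forces the reading of~\eqref{choice-tilW-adhoc} in which each $\Phi_i$ is a single $\sin^2$ bump supported on one cell of side $L_j/S$ and extended by zero, rather than your globally periodic reading with $S$ full periods on $[0,L_j]$ (under which the $\Phi_i$ would fail the linear independence required by Assumption~\ref{A:DS}). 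For the norm ratios of a single $\Phi_i$ the two readings give identical numbers, so your value of $C_1$ is unaffected, and membership in $\rmD(A)$ --- your ``step requiring most care'' --- is immediate for the localized bump because $\phi_j$ and $\phi_j'$ both vanish at the cell boundary, so both Dirichlet and Neumann data are met.

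The one substantive divergence is in $\norm{\Delta\Phi_i}{H}^2$, where you correctly retain the off-diagonal terms and reduce them via $\int\phi_j''\phi_j=-B_j$. Carrying that bookkeeping through gives
\[
\frac{\norm{\Delta\Phi_i}{H}^2}{\norm{\Phi_i}{H}^2}
=\sum_{j=1}^d\frac{C_j}{A_j}+\sum_{j\ne j'}\frac{B_j}{A_j}\,\frac{B_{j'}}{A_{j'}}
=\frac{16\pi^4S^4}{3}\sum_{j=1}^d\frac{1}{L_j^4}
+\frac{16\pi^4S^4}{9}\sum_{j\ne j'}\frac{1}{L_j^2L_{j'}^2},
\]
which is of order $S^4$ but does \emph{not} equal $\tfrac{16\pi^4}{3}S^4\sum_j L_j^{-2}$: the constant $C_2$ displayed in the proposition is obtained in the paper's proof by keeping only the diagonal terms, and moreover carries $L_i^{-2}$ where the diagonal computation itself produces $L_i^{-4}$. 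So your proof, executed honestly, will not land on the stated $C_2$, and the phrase ``delivers the $C_2S^4$ leading term'' glosses over this. You should either record the corrected constant or state explicitly that only the order $S^4$ matters --- which is indeed all that is used, since the proposition serves only to show $\norm{\theta}{\rmD(A)}^2/\norm{\theta}{V}^2\sim S^2$ and hence $\underline\alpha_{S,\ell}\to0$ for $\ell\in\{0,1\}$.
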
  
 The proof is given in the Appendix, Section~\ref{Apx:proofP:underAlpha}.

As a consequence it follows that the Poincar\'e-like constant~$\underline\alpha_{S,\ell}$
in~\eqref{und-alpha}, satisfies the limit
$\lim\limits_{S\to+\infty}\underline\alpha_{S,\ell}=0$, for~$\ell\in\{0,1\}$.
In these cases it may be necessary
to choose firstly~$S$ and subsequently~$\lambda$ (depending on~$S$) in Theorem~\ref{T:main};
see Remark~\ref{R:underAlpha}.

 \section{Numerical simulations}\label{S:simul}
 
Here we show the results of simulations illustrating the stabilizability result
stated in Main Result in the Introduction; see main Theorem~\ref{T:main}.
We consider the following scalar parabolic system as an academic model for
the error dynamics; see~\eqref{sys-z-o}.
\begin{align}
&\tfrac{\p}{\p t}z + (-\nu\Delta +\Id)z+a z +b\cdot\nabla z-\norm{z}{\bbR}^{3}z
+(\tfrac{\p}{\p x_1}z-2\tfrac{\p}{\p x_2}z)z\notag\\
&\hspace*{5em}=-\lambda A^{-1}P_{\clW_S}^{\widetilde\clW_S^\perp}A^\ell
P_{\widetilde\clW_S}^{\clW_S^\perp}\bfZ^{W_S}\clZ z,\qquad
\notag\\
 &\hspace*{0em} z(0)=z_0\in H^1(\Omega),\qquad \tfrac{\p }{\p \bfn}z\rest{\p\Omega}=0\notag
\end{align}
evolving in~$V=H^1(\Omega)$ under Neumann boundary conditions,
where~$\Omega=(0,1)\times(0,1)\in\bbR^2$ is the unit square.
As parameters we set
\begin{align}\notag
 \ell=2,\quad \nu=0.1,\quad a=-2+x_1 -\norm{\sin(t+x_1)}{\bbR},\quad b=\begin{bmatrix}
                                                         x_1+x_2\\ \cos(t)x_1x_2
                                                        \end{bmatrix}.
\end{align}
As sensors we take indicator functions~$1_{\omega_i}=1_{\omega_i}(x)$ of rectangular
subdomains~$\omega_i\subset\Omega$
as in Figure~\ref{fig.suppsensors}. Hence,
the output~$\clZ y(t)\in\bbR^{S_\sigma}$ consists of the ``averages'' of the solution
over the same subdomains,
\[
(\clZ y(t))_i=(1_{\omega_{i}}, y(t))_{L^2(\Omega)}=\int_{\omega_{i}}y(t),\,\rmd\Omega
\]
and the output error is~$\clZ z(t)=\clZ \widehat y(t)-\clZ y(t)\in\bbR^{S_\sigma}$,
\[
(\clZ z(t))_i=(1_{\omega_{i}},z(t))_{L^2(\Omega)}
=(1_{\omega_{i}},\widehat y(t))_{L^2(\Omega)}-(1_{\omega_{i}}, y(t))_{L^2(\Omega)}.
\]

Finally, we set the normalized initial condition as
\begin{equation}\notag
 z_0=\frac{2-x_1x_2}{\norm{2-x_1x_2}{V}}\in V.
\end{equation}

The number of sensors~${S_\sigma}$ and the parameter~$\lambda$, which we know should
be both large enough (cf.~Main Result in Introduction),
will be set later on.

As auxiliary functions we will take the functions in~\eqref{choice-tilW-adhoc}.

\subsection{Discretization}
The following simulations have been performed in {\sc matlab} and correspond to a piecewise
linear (hat functions based)
finite element discretization of
the equation in the spatial variable. Subsequently, for the time variable and for
discrete time instants~$t_j=kj$, $j\in\bbN$
and with time step~$k>0$,
we use the standard linear approximation for the time derivative,
a Crank--Nicolson scheme 
to approximate the symmetric operator~$\clA=(-\nu\Delta +\Id+a\Id)$, and a Adams--Bashforth
scheme for the remaining terms~$\clR$,
that is, denoting~$t_j^*\coloneqq\frac{t_j+t_{j+1}}2$ we take $\tfrac{\p}{\p t}z(t_j^*)
\approx\frac{z(t_{j+1})-z(t_{j})}{k}$,
$\clA(t_j^*) z(t_j^*)\approx\frac{\clA(t_j) z(t_j)+\clA(t_{j+1}) z(t_{j+1})}2$ and
$\clR(t_j^*, z(t_j^*))\approx\frac{3\clR(t_j,z(t_j))-\clR(t_{j-1},z(t_{j-1}))}2$.

We will consider the cases where the number~$S_\sigma$ of sensors belongs to~$\{4,9,16\}$.
The corresponding triangulations
of the spatial domain, used in the simulations, are shown in Figure~\ref{Fig:Mesh_sensors_location}.

\begin{figure}[ht]
\centering
\subfigure
{\includegraphics[width=0.31\textwidth]{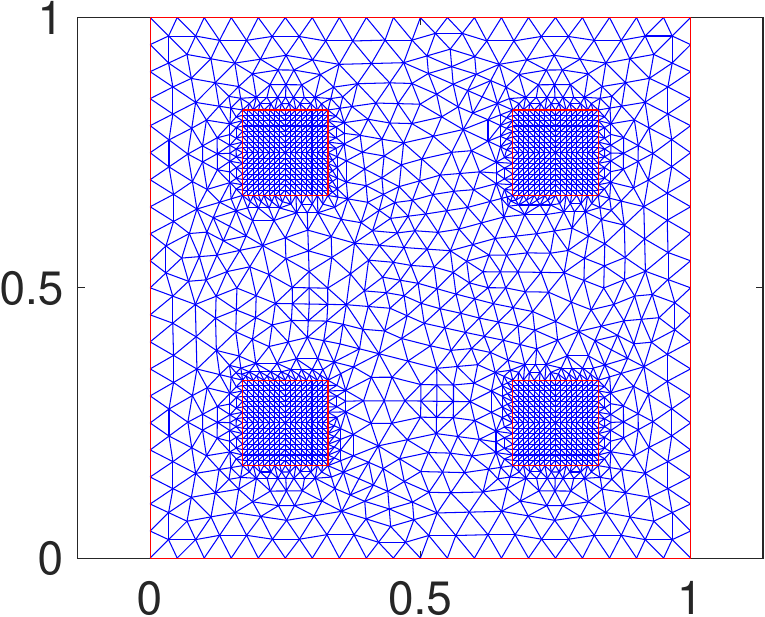}}
\quad
\subfigure
{\includegraphics[width=0.31\textwidth]{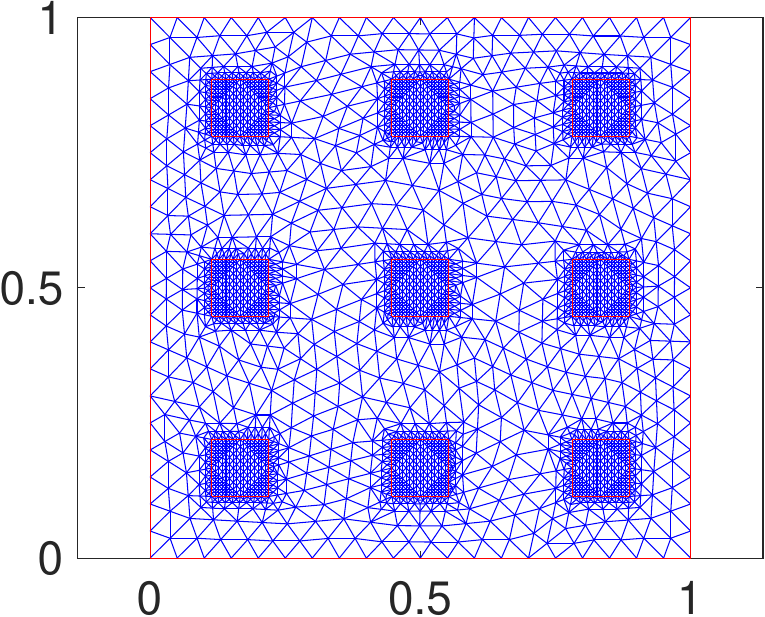}}
\quad
\subfigure
{\includegraphics[width=0.31\textwidth]{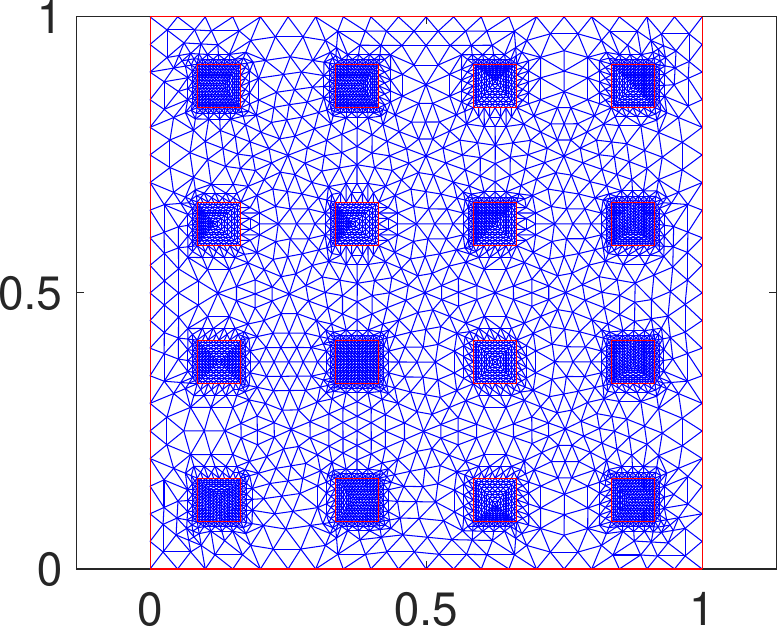}}
\caption{Locations of sensors in cases $S_\sigma\in\{4,9,16\}$.}
\label{Fig:Mesh_sensors_location}
\end{figure}

In the figures below, the symbol ``${\rm npts}\Omega$'' stands
 for the number of mesh points in the triangulation of the spatial
 domain~$\Omega$, the symbol ``$k$''
 stands for the time step, and~$T$ stands for the end point of the time
 interval~$[0,T]$ where the simulations have been run in. 
If the plots in the figures  do not include the entire
interval~$[0,T]$, then it means that the norm~$\norm{z}{V}$ of the error blows
up at time~$t_{\rm bu}<T$ near the last plotted time
instant.

\subsection{Necessity of large~$S_\sigma$ for error stability}
In Figure~\ref{Fig:zo4Sens} we see that the free error dynamics (i.e., under no output injection)
is blowing up in finite time, namely, at time~$t_{\rm bu}\approx 0.11$.
The simulations correspond to the mesh corresponding to~$4$ sensors.
Also in Figure~\ref{Fig:zo4Sens} we see that the error norm, for the output
injection corresponding to the case of~$4$ sensors,
blows at time~$t_{\rm bu}\approx 9.5$ for $\lambda=0.004$,
while it blows up at time~$t_{\rm bu}\approx 10.5$ for larger~$\lambda\in\{0.02,0.1,0.5\}$.
That is, the blow up time increases due to the
the output injection, but such injection is not able to stabilize the error dynamics.
In particular, we see that the blow up time seems to converge to a value in the
interval~$[10.5,11]$ as $\lambda$ increases.
Therefore, we can conclude that~$4$ sensors are likely not able to stabilize the
estimation error norm. This confirms the statement
of Theorem~\ref{T:main} on the necessity of a large enough number of sensors.
\begin{figure}[ht]
\centering
\subfigure[Free error dynamics.]
{\includegraphics[width=0.45\textwidth]{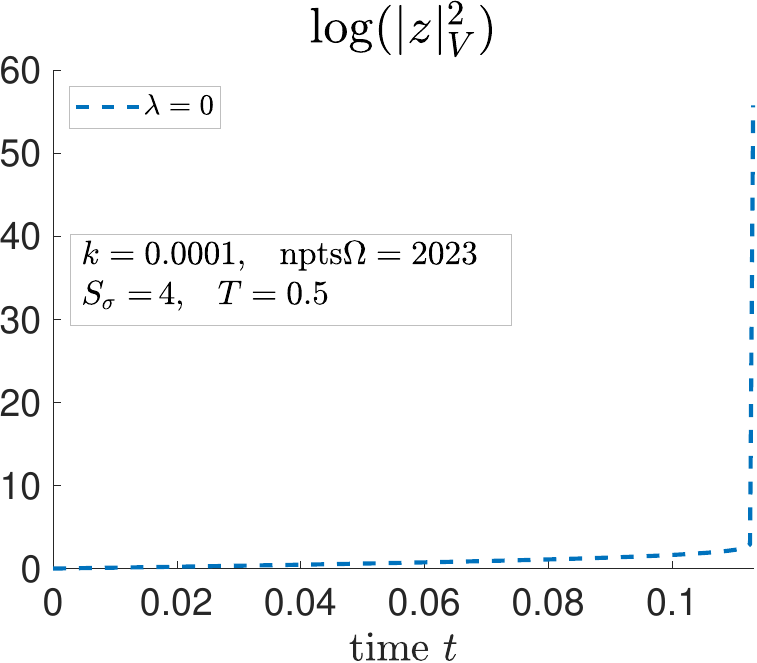}}
\quad
\subfigure[The case of~$4$ sensors.]
 {\includegraphics[width=0.45\textwidth]{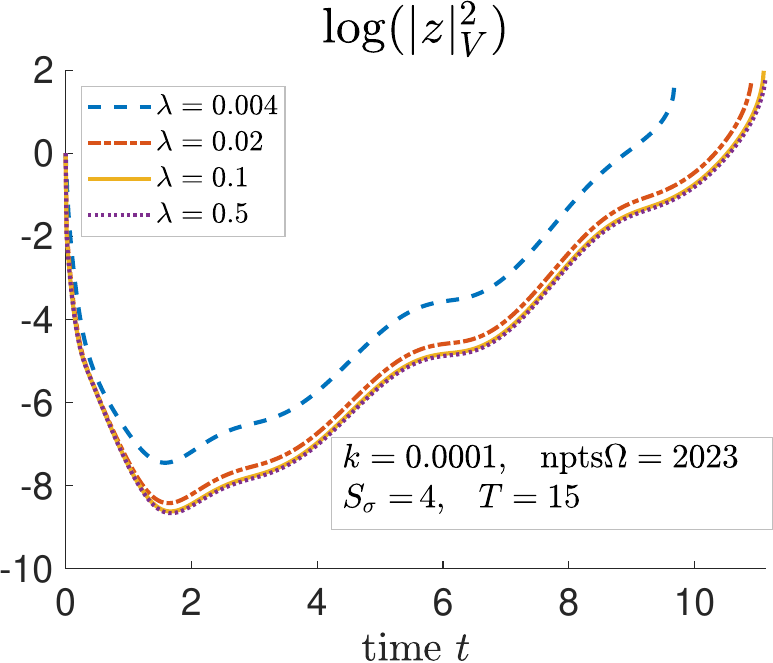}}
\caption{The free dynamics and the case of~$4$ sensors.}
\label{Fig:zo4Sens}
\end{figure}

In Figure~\ref{Fig:zo9and16Sens} we see that~$9$ and~$16$ sensors are able
to stabilize the error norm for the considered values of~$\lambda$.
We also see that, for a fixed number~$S_\sigma$ of sensors, the exponential
stabilization rate increases with~$\lambda$,
and converges to a bounded value.
This means that if we want to achieve a larger stability rate it is not enough to increase~$\lambda$;
we will need to increase also the number of sensors as stated in
Theorem~\ref{T:main}. This is confirmed in Figure~\ref{Fig:zo9and16Sens}
where we see that with~$16$ sensors we obtain a faster decreasing of
the error norm~$\norm{z}{V}$, namely, for~$\lambda=0.02$
we find the rates~$\mu\approx5=\frac12\frac{150}{15}$
for~$S_\sigma=9$, and~$\mu\approx11.5\approx\frac12\frac{350}{15}$
for~$S_\sigma=16$.

\begin{figure}[ht]
\centering
\subfigure[The case of~$9$ sensors.]
{\includegraphics[width=0.45\textwidth]{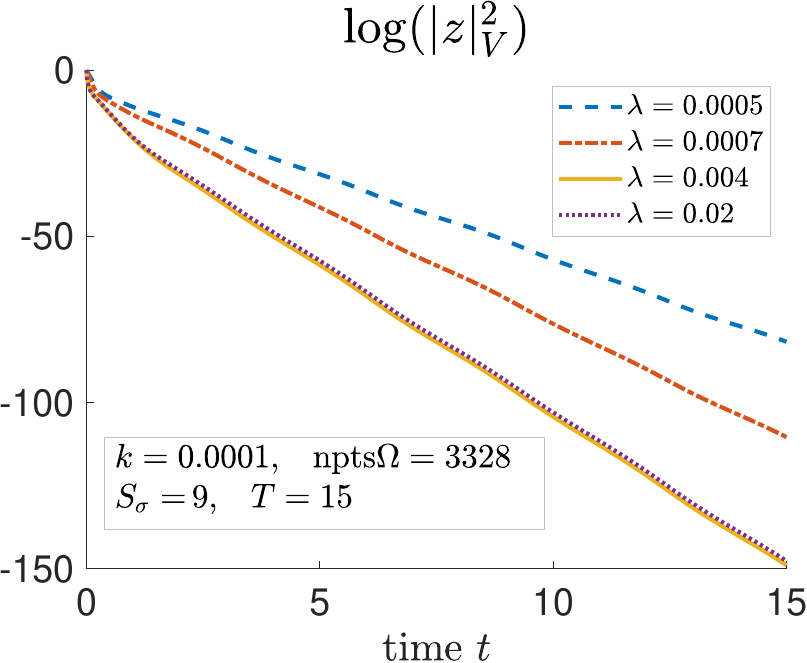}}
\quad
\centering
\subfigure[The case of~$16$ sensors.]
{\includegraphics[width=0.45\textwidth]{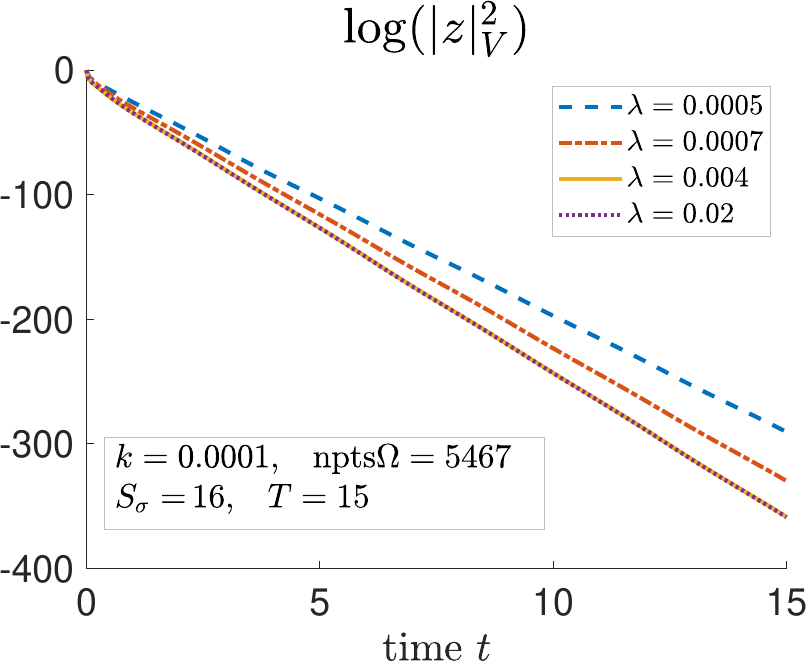}}
\caption{The case of~$9$ and~$16$ sensors.}
\label{Fig:zo9and16Sens}
\end{figure}

\subsection{Necessity of large~$\lambda$ for error stability}
We know that the free error dynamics, with~$\lambda=0$, is not stable. Here we
show that~$\lambda>0$ must be large enough in order to achieve stability of the error dynamics.
Indeed in Figure~\ref{Fig:zo9and16Sens_slam} we see that, for small~$\lambda$, neither~$9$ nor~$16$ sensors are
able to stabilize
the error dynamics.
\begin{figure}[ht]
\centering
\subfigure[With~$9$ sensors.]
{\includegraphics[width=0.45\textwidth]{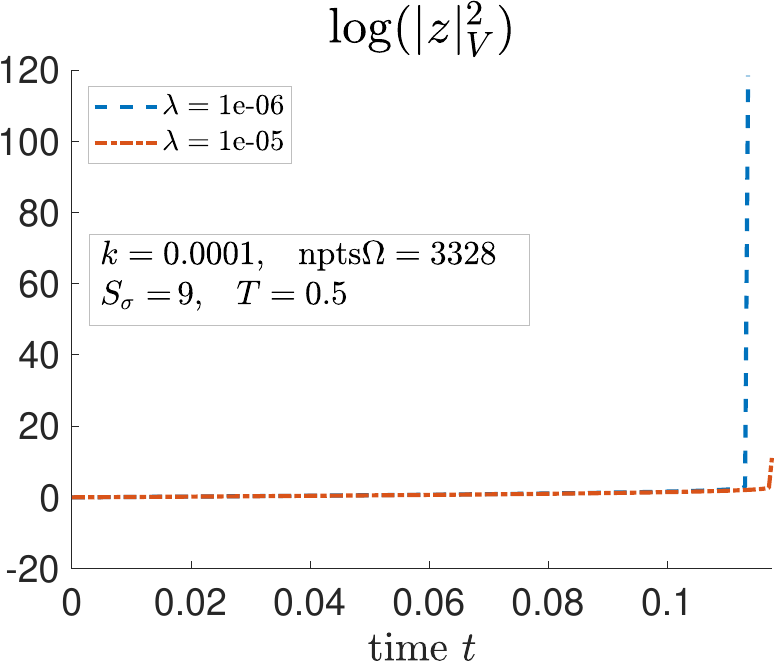}}
\quad
\subfigure[With~$16$ sensors.]
 {\includegraphics[width=0.45\textwidth]{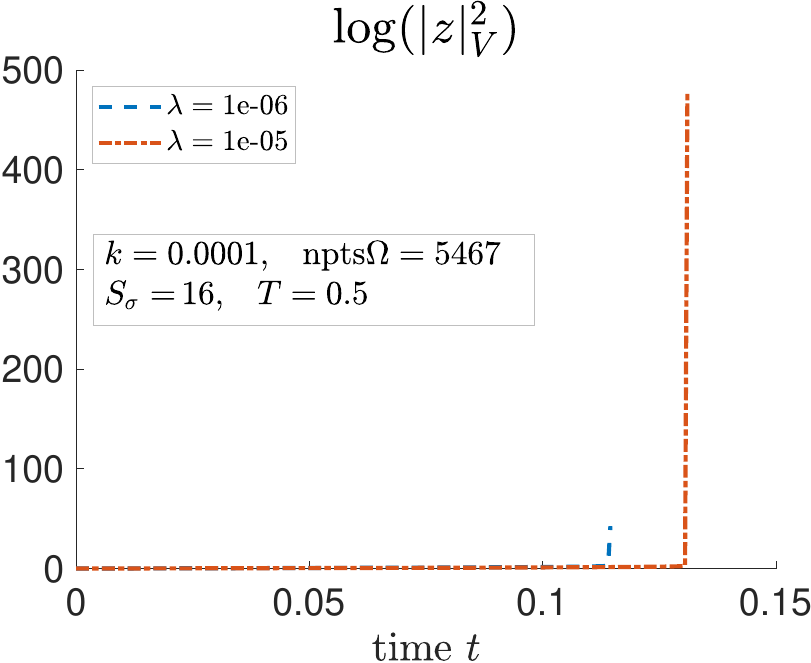}}
\caption{The case of small~$\lambda$.}
\label{Fig:zo9and16Sens_slam}
\end{figure}

The above results show that both~$S_\sigma$ and~$\lambda$ must be taken large enough
to achieve the stability of the error dynamics, which agree with the theoretical results.

\section{Final remarks}\label{S:finalremks}

Though the ``best'' choice of all the parameters involved in the output
injection operator~$\fkI_{S}^{[\lambda,\ell]}$
is not the main focus of this paper. Such choice is (or, may be) important for applications
(e.g., numerical simulations). Here, we just discuss briefly
the semiglobal estimatability result presented in this manuscript,
from practicability viewpoints, and mention related problems which could
be the subject of further investigation.

\subsection{On the choice of~$S$ and~$\lambda$}

Let us fix~$(\mu,R)$. For a given~$\ell$, the estimatability property of the output
injection operator~$\fkI_{S}^{[\lambda,\ell]}$, in Theorem~\ref{T:main}, depends
on the desired exponential decreasing rate~$\mu$ and on the upper bound~$R$
for the norm of the initial error,
simply because the pair~$(S,\lambda)$ depends on, and ``increases with'',~$(\mu,R)$.
In practice the initial error~$z_0$ is unknown for us,
thus we will not be able to surely choose an appropriate~$\fkI_{S}^{[\lambda,\ell]}$
stabilizing the nonlinear error dynamics.
However, on the other hand, we are sure that it is enough
to increase both~$S$ and~$\lambda$ to find a stabilizing~$\fkI_{S}^{[\lambda,\ell]}$.
Furthermore, the fact that the transient bound~$\varrho>1$ will get smaller,
for large~$S$ and~$\lambda$, can be used in applications
to decide whether we should (still)
increase~$S$ and~$\lambda$. Namely, we increase~$S$ and/or~$\lambda$ if (e.g., in simulations)
we realize that the error norm is not starting decreasing after a suitable amount of time. 

If we knew that the transient bound is~$\varrho=1$ then we would know that for large enough~$S$ and~$\lambda$,
the error norm must be strictly decreasing.
Hence we
would increase~$S$ and~$\lambda$ if we realize that the norm is not strictly decreasing.

\subsection{Strictly decreasing estimate error norm}

Let us fix again~$(\mu,R)$. We shall see now that we can achieve the optimal
transient bound constant~$\varrho=1$ when we have~$\zeta_{2i}=0$
for all~$i\in\{1,2,\dots,n\}$ in Assumption~\ref{A:N}. This case holds for
parabolic equations~\eqref{sys-y-o-parab} in the
cases~$d\in\{1,2\}$ with~$r>1$ and~$s\ge1$, and also in the case~$d=3$ with~$r\in(1,3]$
and~$\widetilde b=0$. These facts have been proven
in~\cite[Sects.~5.2 and~5.3]{Rod20-eect}. In particular, in the cases~$d\in\{1,2\}$ we
can take arbitrary large exponents/degrees
for the nonlinearities, ~$r\in(1,+\infty)$ and~$s\in[1,+\infty)$, in~\eqref{sys-y-o-parab}.
Note that our simulations have been performed
in a two-dimensional domain, and in Figure~\ref{Fig:zo9and16Sens} the estimation
error norm is strictly decreasing.

 To show that we can take~$\varrho=1$ if~$\zeta_{2i}=0$, $i\in\{1,2,\dots,n\}$,
 we observe that in such case we have~$\chi_2=0$, due to~\eqref{chi12}. Thus we obtain
$
 \Psi(y)=2\left(1+\norm{y}{V}^{\chi_1} \right) 
$
in~\eqref{thetas-est3}, with~$\norm{h(y)}{L^{\infty}(\bbR_0,\bbR)}\le\widetilde C$ and
\begin{equation}\notag
\tfrac{\ed}{\ed t}\norm{z}{V}^2
  \le -\Bigl(\overline\mu
  -\norm{h(y)}{}\left(1+\norm{z}{V}^{\chi_3}\right)\Bigr)\norm{z}{V}^2
  \le -\Bigl(\breve C_1
  -\breve C_2\norm{z}{V}^{\chi_3}\Bigr)\norm{z}{V}^2
\end{equation}
in~\eqref{dtz5}, with~$\breve C_1\coloneqq\overline\mu
  -\widetilde C$ and~$\breve C_2=\widetilde C$. Recalling~\eqref{choice-Slambda} and the fact that~$\overline\mu$
  can be made arbitrarily large by choosing both~$S$ and~$\lambda$ large enough, it is clear that
  for any given~$R>0$ and~$\mu>0$,
  we can set both~$S$ and~$\lambda$ large enough
  so that~$\widehat\mu\coloneqq\breve C_1 -\breve C_2R^\frac{\chi_3}{2}\ge\mu$. 
  Then by Proposition~4.3 in~\cite{Rod20-eect}, we find the following estimate,
  with transient bound~$\varrho=1$, 
\[
\norm{z(t)}{V}^2\le \ex^{-\widehat\mu(t-s)}\norm{z(s)}{V}^2\le\ex^{-\mu(t-s)}\norm{z(s)}{V}^2,
\qquad\mbox{if}\quad\norm{z(0)}{V}^2\le R.
\]

\subsection{On the choice of the set of auxiliary functions~$\widetilde W_{S}$ and~$\ell$}

The choice of the auxiliary set~$\widetilde W_S\subset\rmD(A)$ is at our disposal.
In Section~\ref{S:parabolic-OK} we have suggested three possible choices, namely,
those in~\eqref{choice-tilW}. In Section~\ref{S:simul}
we have taken only the choice in~\eqref{choice-tilW-adhoc}. We did not compare with
other possible choices because
the ``optimal'' choice for~$\widetilde W_S$ is not the main goal of this paper.
However, we must say that, though
the operator norm of the 
oblique projection~$P_{\widetilde\clW_{S}}^{\clW_{S}^\perp}$ does not play any crucial role in
the estimatability result, it plays a role in the norm of the infection operator, as we have seen in
Section~\ref{sS:boundInj-OPN}. A large operator norm of the 
oblique projection can influence negatively
the practicability of the observer in applications (e.g., leading to the need of taking
a very small time step~$k$ in simulations), as shown/discussed through numerical
results presented in~\cite[Sect.~6]{Rod20-sicon}
(in there, for choices as spans of eigenfunctions, cf.~\eqref{choice-tilW-eigs}).
By this reason, it could be interesting to investigate the performance of the feedback
for different choices of~$\widetilde W_S$ (e.g., those in~\eqref{choice-tilW}), or even
try to define and investigate the ``optimal choice''. 

In our simulations we have taken only the border case~$\ell=2$ for the power~$\ell$ of the diffusion
taken in the injection operator~$\fkI_S^{[\lambda,\ell]}$.
Another point that could be investigated is the performance of the observer for different values of~$\ell$.

\subsection{On the time step}
In Table~\ref{Tab:nInj0} we see that the $H$-norm of the output injection at initial time,
for some pairs~$(S_\sigma,\lambda)$.
This is the reason we took a small time step as~$k=10^{-4}$. Note that such norm increases
with $\lambda$, so for larger~$\lambda$ we
may need to take a smaller time step to capture (or, accurately approximate) the effect of
the output injection on the dynamics. A very small time
step may be impracticable for real world applications, thus it could be interesting to
investigate, in a future work, whether an appropriate choice of
~$\widetilde W_{S}$ and/or $\ell$ allows us to take larger~$k$.
\begin{table}[ht]
\centering
\begin{tabular}{l|lll}
\hline
 $(S_\sigma,\lambda)$&(4,\,0.5)&(9,\,0.02)&(16,\,0.02)\\\hline
 $\norm{\fkI_S^{[\lambda,\ell]}z_0}{H}$&3537.9599&747.3875&2594.0443\\ 
 \hline \multicolumn{4}{c}{\vspace*{-.5em}}
\end{tabular}
\caption{Norm of output injection at initial time.}\label{Tab:nInj0}
\end{table}

\appendix
\section*{Appendix}
\setcounter{section}{1}
\setcounter{theorem}{0} \setcounter{equation}{0}
\numberwithin{equation}{section}

\subsection{Proof of Proposition~\ref{P:fkN}}\label{sS:proofP:fkN}

 With~$\widehat y_1\coloneqq y+z_1$ and~$\widehat  y_2\coloneqq y+z_2$, we write
 \begin{align}
 \fkN_y(t,z_1)-\fkN_y(t,z_2)&=\clN(t,\widehat y_1)-\clN(t, y)-(\clN(t,\widehat y_2)-\clN(t, y))\notag\\
 &=\clN(t,\widehat y_1)-\clN(t,\widehat y_2)
 =\clN(t,y+z_1)-\clN(t,y+z_2),\notag
 \end{align}
which leads us to~$\widehat y_1-\widehat y_2=z_1-z_2\eqqcolon d$ and, using Lemma~\ref{L:NN},
\begin{align}
 &2\Bigl( \fkN_y(t,z_1)-\fkN_y(t,z_2),A(z_1-z_2)\Bigr)_{H}
 =2\Bigl( \clN(t,\widehat y_1)-\clN(t,\widehat y_2),A(\widehat y_1-\widehat y_2))\Bigr)_{H}\notag\\
 &\hspace*{0em}\le \widehat\gamma_0 \norm{d}{\rmD(A)}^{2}
  +\!\left(\!1+\overline\gamma_0^{-\frac{1+\|\delta_2\|}{1-\|\delta_2\|} }\right)
 \!\overline C_{\clN1}\sum\limits_{j=1}^n\norm{d}{V}^{\frac{2\delta_{1j}}{1-\delta_{2j}}}\sum\limits_{k=1}^2
 \norm{\widehat y_k}{V}^\frac{2\zeta_{1j}}{1-\delta_{2j}}
 \norm{\widehat y_k}{\rmD(A)}^\frac{2\zeta_{2j}}{1-\delta_{2j}}\notag
  .
\end{align}
Therefore~\eqref{fkNyAy} follows with~$\widetilde C_{\fkN1}=\overline C_{\clN1}$.

By setting~$z_2=0$ in~\eqref{fkNyAy}, we obtain for each~$\widetilde\gamma_0>0$,
\begin{align}
  &2\Bigl( \fkN_y(t,z_1),Az_1\Bigr)_{H}\label{aux_fkN}\\
 &\le \widetilde\gamma_0 \norm{z_1}{\rmD(A)}^{2}
   +\!\left(\!1+\widetilde\gamma_0^{-\frac{1+\|\delta_2\|}{1-\|\delta_2\|} }\right)
 \!\widetilde C_{\clN1}
 \sum\limits_{j=1}^n\norm{z_1}{V}^{\frac{2\delta_{1j}}{1-\delta_{2j}}}
 \sum\limits_{l=0}^1\norm{y+lz_1}{V}^\frac{2\zeta_{1j}}{1-\delta_{2j}}
 \norm{y+lz_1}{\rmD(A)}^\frac{2\zeta_{2j}}{1-\delta_{2j}}.\notag
\end{align}
Now, for simplicity we fix~$j$, and set
\begin{align}\notag
& r=r_j\coloneqq\tfrac{2\delta_{1j}}{1-\delta_{2j}}\ge2,\quad p
=p_j\coloneqq\tfrac{2\zeta_{1j}}{1-\delta_{2j}}\quad\mbox{and}\quad
q=q_j\coloneqq\tfrac{2\zeta_{2j}}{1-\delta_{2j}}<2.
\end{align}
Note that~$p\ge0$, $r\ge2$, and~$q\in[0,2)$ due to the relations~$\delta_{1j}+\delta_{2j}\ge1$ and~$\zeta_{2j}+\delta_{2j}<1$, in Assumption~\ref{A:N}.

We consider first the case~$q\ne0$. By the triangle inequality and~\cite[Prop.~2.6]{PhanRod17}, we obtain
\begin{align}
 \Upsilon_j&\coloneqq\norm{z_1}{V}^r\sum\limits_{l=0}^1\norm{y+lz_1}{V}^p
 \norm{y+lz_1}{\rmD(A)}^q\notag\\
 &\hspace*{0em}=\norm{z_1}{V}^r\norm{y}{V}^p \norm{y}{\rmD(A)}^q
 +\norm{z_1}{V}^r\norm{y+z_1}{V}^p \norm{y+z_1}{\rmD(A)}^q\notag\\
 &\hspace*{0em}\le\norm{z_1}{V}^r\norm{y}{V}^p \norm{y}{\rmD(A)}^q\notag\\
 &\hspace*{1em}+\norm{z_1}{V}^{r}(1+2^{p-1})(1+2^{q-1})\left(\norm{y}{V}^{p}+\norm{z_1}{V}^{p}\right)
 \left(\norm{y}{\rmD(A)}^q+\norm{z_1}{\rmD(A)}^q\right).\label{Y1}
 \end{align}
 Setting~$D_{p,q}\coloneqq(1+2^{p-1})(1+2^{q-1})$ and using the Young
 inequality, the last term satisfies for each~$\gamma_2>0$,
 \begin{align}
 D_{p,q}^{-1}\clT_j&\coloneqq \norm{z_1}{V}^{r}\left(\norm{y}{V}^{p}+\norm{z_1}{V}^{p}\right)
 \left(\norm{y}{\rmD(A)}^q+\norm{z_1}{\rmD(A)}^q\right)\notag\\
 &\le\norm{z_1}{V}^{r}\left(\norm{y}{V}^{p}+\norm{z_1}{V}^{p}\right)\norm{z_1}{\rmD(A)}^q
 +\norm{z_1}{V}^{r}\left(\norm{y}{V}^{p}+\norm{z_1}{V}^{p}\right)\norm{y}{\rmD(A)}^q
 \notag\\
 &\hspace*{0em}\le \gamma_2^\frac{2}{q}\norm{z_1}{\rmD(A)}^2
 + \gamma_2^{-(1-\frac{q}{2})^{-1}}\left(\norm{z_1}{V}^{r}
 \left(\norm{y}{V}^{p}+\norm{z_1}{V}^{p}\right)\right)^{(1-\frac{q}{2})^{-1}}\notag\\
 &\hspace*{1em}+\norm{z_1}{V}^{r}\left(\norm{y}{V}^{p}+\norm{z_1}{V}^{p}\right)
 \left(1+\norm{y}{\rmD(A)}^{\dnorm{\frac{2\zeta_{2}}{1-\delta_{2}}}{}}\right)
 ,\notag
\end{align}
which implies, since~$1-\frac{q}{2}=\frac{2-q}{2}$,
\begin{align}
 D_{p,q}^{-1}\clT_j
 &\hspace*{0em}\le \gamma_2^\frac{2}{q}\norm{z_1}{\rmD(A)}^2
 + \gamma_2^{-\frac{2}{2-q}}\norm{z_1}{V}^{\frac{2r}{2-q}}\left(\norm{y}{V}^{p}
 +\norm{z_1}{V}^{p}\right)^{\frac{2}{2-q}}\notag\\
 &\hspace*{1em}+\norm{z_1}{V}^{r}\left(\norm{y}{V}^{p}+\norm{z_1}{V}^{p}\right)
 \left(1+\norm{y}{\rmD(A)}^{\dnorm{\frac{2\zeta_{2}}{1-\delta_{2}}}{}}\right)\notag\\
 &\hspace*{0em}\le \gamma_2^\frac{2}{q}\norm{z_1}{\rmD(A)}^2
 + \gamma_2^{-\frac{2}{2-q}}(1+2^{\frac{2}{2-q}-1})\norm{z_1}{V}^{\frac{2r}{2-q}}
 \left(\norm{y}{V}^{\frac{2p}{2-q}}+\norm{z_1}{V}^{\frac{2p}{2-q}}\right)
 \notag\\
 &\hspace*{1em}+\norm{z_1}{V}^{r}\left(\norm{y}{V}^{p}+\norm{z_1}{V}^{p}\right)
 \left(1+\norm{y}{\rmD(A)}^{\dnorm{\frac{2\zeta_{2}}{1-\delta_{2}}}{}}\right)\notag\\
 &\hspace*{0em}\le \gamma_2^\frac{2}{q}\norm{z_1}{\rmD(A)}^2\notag\\
 &\hspace*{1em}+ D_{q,\gamma_2}\left(\norm{z_1}{V}^{\frac{2r}{2-q}}
 \norm{y}{V}^{\frac{2p}{2-q}}+\norm{z_1}{V}^{\frac{2(r+p)}{2-q}}
 +\norm{z_1}{V}^{r}\norm{y}{V}^{p}+\norm{z_1}{V}^{r+p}\right)
 \left(1+\norm{y}{\rmD(A)}^{\dnorm{\frac{2\zeta_{2}}{1-\delta_{2}}}{}}\right)\notag
\end{align}
 with
 \begin{subequations}\label{Y3}
 \[
  D_{q,\gamma_2}\coloneqq 1+\gamma_2^{-\frac{2}{2-q}}(1+2^{\frac{2}{2-q}-1}),
 \]
 and then
  \begin{align}
 &D_{q,\gamma_2}^{-1}\left(D_{p,q}^{-1}\clT_j-\gamma_2^\frac{2}{q}\norm{z_1}{\rmD(A)}^2\right)
 \left(1+\norm{y}{\rmD(A)}^{\dnorm{\frac{2\zeta_{2}}{1-\delta_{2}}}{}}\right)^{-1}\notag\\
  &\hspace*{0em}\le \left(\norm{z_1}{V}^{\frac{2r}{2-q}-2}\norm{y}{V}^{\frac{2p}{2-q}}
  +\norm{z_1}{V}^{\frac{2(r+p)}{2-q}-2}
 +\norm{z_1}{V}^{r-2}\norm{y}{V}^{p}+\norm{z_1}{V}^{r+p-2}\right)\norm{z_1}{V}^2.
\end{align}
\end{subequations}

Observe that
\begin{subequations}\label{ineqs-rpq}
\begin{align}
 \tfrac{2p}{2-q}&=\tfrac{4\zeta_{1j}}{2-2\delta_{2j}-2\zeta_{2j}}
 \le\tfrac{2\dnorm{\zeta_{1}}{}}{1-\dnorm{\delta_{2}+\zeta_{2}}{}},\\
  \tfrac{2(r+q+p)-4}{2-q}
 &=\tfrac{4(\delta_{1j}+\zeta_{1j}+\zeta_{2j})-4(1-\delta_{2j})}{2-2\delta_{2j}-2\zeta_{2j}}
 \le\tfrac{2\dnorm{\delta_{1}+\delta_{2}+\zeta_{1}+\zeta_{2}}{}-2}{1-\dnorm{\delta_{2}+\zeta_{2}}{}},
\end{align}
and that~$\frac{2c}{2-q}\ge c\Longleftrightarrow 0\ge -qc$. Thus, we obtain
that $\frac{2c}{2-q}\ge c$ for all~$c\ge0$, and
\[
 \tfrac{2p}{2-q}\ge p\quad\mbox{and}\quad\tfrac{2(r+q+p)-4}{2-q}\ge\tfrac{2(r+p)-4}{2-q}\ge(r+p)-2\ge r-2\ge0,
\]
\end{subequations}
which together with~\eqref{Y3} give us
 \begin{align}
 &D_{q,\gamma_2}^{-1}\left(D_{p,q}^{-1}\clT_j-\gamma_2^\frac{2}{q}\norm{z_1}{\rmD(A)}^2\right)
 \left(1+\norm{y}{\rmD(A)}^{\dnorm{\frac{2\zeta_{2}}{1-\delta_{2}}}{}}\right)^{-1}\notag\\
  &\hspace*{0em}\le \left(\norm{z_1}{V}^{\frac{2(r+q)-4}{2-q}}+\norm{z_1}{V}^{\frac{2(r+q+p)-4}{2-q}}
 +\norm{z_1}{V}^{r-2}+\norm{z_1}{V}^{r+p-2}\right)
 \left(2+\norm{y}{V}^{\frac{2p}{2-q}}
 +\norm{y}{V}^{p}\right)
 \norm{z_1}{V}^2\notag\\
  &\hspace*{0em}\le \left(4+4\norm{z_1}{V}^{\tfrac{2\dnorm{\delta_{1}+\delta_{2}+\zeta_{1}+\zeta_{2}}{}-2}
  {1-\dnorm{\delta_{2}+\zeta_{2}}{}}}\right)
 \left(4+2\norm{y}{V}^{\tfrac{2\dnorm{\zeta_{1}}{}}{1-\dnorm{\delta_{2}+\zeta_{2}}{}}}
 \right)
 \norm{z_1}{V}^2,\notag
 \end{align}
 which implies
 \begin{subequations}\label{Y4}
 \begin{align}
 &\clT_j
   \le D_{p,q}\gamma_2^\frac{2}{q}\norm{z_1}{\rmD(A)}^2
  +  \widehat D\left(1+\norm{y}{V}^{\chi_1} \right)
  \left(1+\norm{y}{\rmD(A)}^{\chi_2}\right)
  \left(1+\norm{z_1}{V}^{\chi_3}\right)
  \norm{z_1}{V}^2,\\
  &D_{p,q}\coloneqq (1+2^{p-1})(1+2^{q-1}),\quad \widehat D\coloneqq 16D_{p,q}D_{q,\gamma_2},\\
  &\chi_1\coloneqq \tfrac{2\dnorm{\zeta_{1}}{}}{1-\dnorm{\delta_{2}+\zeta_{2}}{}}\ge0,
  \quad \chi_2\coloneqq{\dnorm{\tfrac{2\zeta_{2}}{1-\delta_{2}}}{}}\ge0 ,\quad
  \chi_3\coloneqq \tfrac{2\dnorm{\delta_{1}+\delta_{2}+\zeta_{1}+\zeta_{2}}{}-2}
  {1-\dnorm{\delta_{2}+\zeta_{2}}{}}\ge0.
 \end{align} 
 \end{subequations}

 For the first term on the right-hand side of~\eqref{Y1}, we also obtain
\begin{align}\label{Y5}
 \clF_j\coloneqq\norm{z_1}{V}^r\norm{y}{V}^p \norm{y}{\rmD(A)}^q
 \le\left(1+\norm{y}{V}^{\chi_1}\right) \left(1+\norm{y}{\rmD(A)}^{\chi_2}\right)
 (1+\norm{z_1}{V}^{\chi_3})\norm{z_1}{V}^2
\end{align}
because~$p\le\chi_1$, $0<q\le\chi_2$, and~$r-2\le\chi_3$. See~\eqref{ineqs-rpq}.

Therefore, by~\eqref{Y1}, \eqref{Y4}, and~\eqref{Y5}, it follows that for all~$\gamma_2\in(0,1]$,
 \begin{subequations}\label{Y6}
 \begin{align}
 \Upsilon_j&\le\clF_j+\clT_j\notag\\
 &\hspace*{0em} \le D_{p,q}\gamma_2^\frac{2}{q}\norm{z_1}{\rmD(A)}^2
  +  (1+\widehat D)\left(1+\norm{y}{V}^{\chi_1} \right)
  \left(1+\norm{y}{\rmD(A)}^{\chi_2}\right)
  \left(1+\norm{z_1}{V}^{\chi_3}\right)
  \norm{z_1}{V}^2,\notag\\
  &\hspace*{0em} \le D_{p,q}\gamma_2^{\frac{2}{\chi_2}}\norm{z_1}{\rmD(A)}^2
  +  (1+\widehat D)\left(1+\norm{y}{V}^{\chi_1} \right)
  \left(1+\norm{y}{\rmD(A)}^{\chi_2}\right)
  \left(1+\norm{z_1}{V}^{\chi_3}\right)
  \norm{z_1}{V}^2,\\
  \mbox{for }&q\ne0,\quad\mbox{with}\quad \gamma_2\le1.
 \end{align} 
 \end{subequations}
Note that
$
 \gamma_2^\frac{2}{q}\le\gamma_2^\frac{2}{\chi_2}$
because~$\gamma_2\le1$ and~$0<q\le\chi_2$.

Finally, we consider the case~$q=0$. We find
\begin{subequations}\label{Y7}
\begin{align}
 \Upsilon_j&\coloneqq\norm{z_1}{V}^r\sum\limits_{l=0}^1\norm{y+lz_1}{V}^p
 =\norm{z_1}{V}^r\norm{y}{V}^p
 +\norm{z_1}{V}^r\norm{y+z_1}{V}^p\notag\\
 &\hspace*{0em}\le
 \norm{z_1}{V}^{r}\left((2+2^{p-1})\norm{y}{V}^{p}+(1+2^{p-1})\norm{z_1}{V}^p\right)\notag\\
&\hspace*{0em}\le
(2+2^{p-1}) \left(1+\norm{y}{V}^{p}\right)\left(\norm{z_1}{V}^{r-2}
+\norm{z_1}{V}^{r+p-2}\right)\norm{z_1}{V}^2\notag\\
&\hspace*{0em}\le
\left(2+2^{p-1}\right) \left(2+\norm{y}{V}^{\chi_1}\right)\left(2+2\norm{z_1}{V}^{\chi_3}\right)\norm{z_1}{V}^2
\notag\\
&\hspace*{0em}\le8
\left(1+2^{p-1}\right) \left(1+\norm{y}{V}^{\chi_1}\right)
\left(1+\norm{z_1}{V}^{\chi_3}\right)\norm{z_1}{V}^2,\qquad q=0.
\\
\mbox{for }&q=0,\quad\mbox{with}\quad \gamma_2\le1,
  \end{align}
\end{subequations}
  with~$\chi_1$ and~$\chi_3$ as in~\eqref{Y4}, where we have used~\eqref{ineqs-rpq}.

Now we observe that
\begin{subequations}\label{Dpqgamma}
\begin{align}
(1+2^{p-1})&\le D_{p,q}= (1+2^{p-1})(1+2^{q-1})\notag\\
&\le\left(1+2^{\frac{ \dnorm{2\zeta_1+\delta_2}{} -1 }{1-\dnorm{\delta_2}{}  }}\right)
\left(1+2^{\frac{ \dnorm{2\zeta_2+\delta_2}{} -1 }{1-\dnorm{\delta_2}{}  }}\right)\eqqcolon\widetilde D_{1},\\
\widehat D&= 16D_{p,q}D_{q,\gamma_2}\le 16\widetilde D_{1}(1+2^{\frac{2q}{2-q}})
\left(1+\gamma_2^{-\frac{2}{2-q  }}\right)\notag\\
&\le16\widetilde D_{1}\left(2+2^{\frac{2\black\dnorm{\zeta_2}{}}{1-\dnorm{\delta_2+\zeta_2}{}}}\right)
\left(2+\gamma_2^{-\frac{1}{1-\dnorm{\delta_2+\zeta_2}{}  }}\right)\notag\\
&\le 32\widetilde D_{1}\left(2+2^{\frac{2\black\dnorm{\zeta_2}{}}{1-\dnorm{\delta_2+\zeta_2}{}}}\right)
\left(1+\gamma_2^{-\frac{1}{1-\dnorm{\delta_2+\zeta_2}{}  }}\right)
=\widetilde D_{2}\left(1+\gamma_2^{-\chi_4 }\right),\\
\widetilde D_{2}&\coloneqq32\widetilde D_{1}
\left(2+2^{\frac{2\black\dnorm{\zeta_2}{}}{1-\dnorm{\delta_2+\zeta_2}{}}}\right),\\
\chi_4&\coloneqq\tfrac{1}{1-\dnorm{\delta_2+\zeta_2}{}  }>1.
\end{align}
\end{subequations}
Further, from~$\widetilde D_{2}>64\widetilde D_{1}
\ge8\left(1+2^{p-1}\right)$
and from from~\eqref{Y6}, \eqref{Y7}, and~\eqref{Dpqgamma}, we conclude that  for both cases, $q>0$ and~$q=0$, we have
 \begin{subequations}\label{Y8}
 \begin{align}
 &\Upsilon_j\le \vartheta\norm{z_1}{\rmD(A)}^2
 +  \widetilde D_2\left(1+\gamma_2^{-\chi_4}\right)\left(1+\norm{y}{V}^{\chi_1} \right)
  \left(1+\norm{y}{\rmD(A)}^{\chi_2}\right)
  \left(1+\norm{z_1}{V}^{\chi_3}\right)
  \norm{z_1}{V}^2,\\
  &\qquad\mbox{for all}\quad \gamma_2\in(0,1],\quad\mbox{with}\quad\vartheta
  \coloneqq\begin{cases}
   \widetilde D_{1}\gamma_2^{\frac{2}{\chi_2}},&\mbox{ for }\chi_2>0,\\
   0,&\mbox{ for }\chi_2=0,
 \end{cases}\\
  &\qquad\mbox{and}\quad\widetilde D_{1}=\ovlineC{\dnorm{\zeta_{1}}{},
  \dnorm{\zeta_{2}}{},\frac{1}{1-\dnorm{\delta_{2}+\zeta_{2}}{}}}
  ,\qquad
  \widetilde D_{2}=\ovlineC{\dnorm{\zeta_{1}}{},\dnorm{\zeta_{2}}{},
  \frac{1}{1-\dnorm{\delta_{2}+\zeta_{2}}{}}}.
 \end{align} 
 \end{subequations}

Now, from~\eqref{aux_fkN} and~\eqref{Y8}, for all~$\widetilde\gamma_0>0$ and~$\gamma_2\in(0,1]$, and with
\begin{equation}\notag
 \chi_5\coloneqq\tfrac{1+\|\delta_2\|}{1-\|\delta_2\|}>1,
\end{equation}
we derive that
\begin{align}
  &2\Bigl( \fkN_y(t,z_1),Az_1\Bigr)_{H}\le \widetilde\gamma_0 \norm{z_1}{\rmD(A)}^{2}
  +\left(1+\widetilde\gamma_0^{-\chi_5 }\right)
 \widetilde C_{\clN1}
 \sum\limits_{j=1}^n\Upsilon_j,\notag\\
  & \le \left(\widetilde\gamma_0+n\vartheta\left(1+\widetilde\gamma_0^{-\chi_5 }\right)
 \widetilde C_{\clN1}\right) \norm{z_1}{\rmD(A)}^{2}\label{NNchis}\\
 &\hspace*{.5em}  +n\widetilde D_{2}\left(1+\gamma_2^{-\chi_4}\right)
 \left(1+\widetilde\gamma_0^{-\chi_5 }\right)\widetilde C_{\clN1}
   \left(1+\norm{y}{V}^{\chi_1} \right)
  \left(1+\norm{y}{\rmD(A)}^{\chi_2}\right)
  \left(1+\norm{z_1}{V}^{\chi_3}\right)
  \norm{z_1}{V}^2.\notag
\end{align}

For an arbitrary~$\widehat\gamma_0>0$, we can choose
\[
 \gamma_2
 =\left(\tfrac{\widehat\gamma_0}{(n+1)\widetilde D_{1}\widetilde C_{\clN1}\left(1+\widetilde\gamma_0^{-\chi_5 }\right)
 +\widehat\gamma_0}\right)^{\frac{\chi_2}2}\le1,
\]
and~$\widetilde\gamma_0=\frac{\widehat\gamma_0}{n+1}$. Note that, in particular,
\[
\widetilde D_{1}\gamma_2^{\frac{2}{\chi_2}}\left(1+\widetilde\gamma_0^{-\chi_5 }\right)
 \widetilde C_{\clN1}< \gamma_2^{\frac{2}{\chi_2}}\left(\widetilde D_{1}
 \widetilde C_{\clN1} \left(1+\widetilde\gamma_0^{-\chi_5 }\right)
 +\tfrac{\widehat\gamma_0}{n+1}\right)=\tfrac{\widehat\gamma_0}{n+1},\quad\mbox{for}\quad \chi_2>0.
\]
and thus for the coefficient of~$\norm{z_1}{\rmD(A)}^{2}$ in~\eqref{NNchis}, we find
\begin{align}\label{CoefDA}
\begin{cases}
 \widetilde\gamma_0+n\vartheta\left(1+\widetilde\gamma_0^{-\chi_5 }\right)
 \widetilde C_{\clN1} < \tfrac{\widehat\gamma_0}{n+1}+n\tfrac{\widehat\gamma_0}{n+1}=\widehat\gamma_0,
&\qquad\mbox{if } \chi_2>0;\\
\widetilde\gamma_0+n\vartheta\left(1+\widetilde\gamma_0^{-\chi_5 }\right)
 \widetilde C_{\clN1} =\tfrac{\widehat\gamma_0}{n+1}< \widehat\gamma_0,
&\qquad\mbox{if } \chi_2=0.
\end{cases}
\end{align}

Observe, next, that
 \begin{align}\label{1-gamma}
&1+\widetilde\gamma_0^{-\chi_5 }=1+(n+1)^{\chi_5 }\widehat\gamma_0^{-\chi_5 },
\end{align}
and
\begin{align}
1+\gamma_2^{-\chi_4}&=2,\quad\mbox{if}\quad\chi_2=0,\notag\\
 1+\gamma_2^{-\chi_4}&\le
 1+\left((n+1)\widetilde D_{1}\widetilde C_{\clN1}\left(1+\widetilde\gamma_0^{-\chi_5 }\right)
 +\widehat\gamma_0\right)^{\frac{\chi_2\chi_4}2}\widehat\gamma_0^{-\frac{\chi_2\chi_4}2}\notag\\
 &\le
 1+(1+2^{\frac{\chi_2\chi_4}2-1})\left(\left((n+1)\widetilde D_{1}
 \widetilde C_{\clN1}\left(1+\widetilde\gamma_0^{-\chi_5 }\right)\right)^{\frac{\chi_2\chi_4}2}
 +\widehat\gamma_0^{\frac{\chi_2\chi_4}2}\right)\widehat\gamma_0^{-\frac{\chi_2\chi_4}2}\notag\\
 &\le
 \widehat C_1+\widehat C_2\left(1+\widetilde\gamma_0^{-\chi_5 }\right)^{\frac{\chi_2\chi_4}2}
 \widehat\gamma_0^{-\frac{\chi_2\chi_4}2},\quad\mbox{if}\quad\chi_2>0;
 \notag
 \end{align}
 with
 \begin{align}
 &\widehat C_1\coloneqq 1+(1+2^{\frac{\chi_2\chi_4}2-1})
 =\ovlineC{\dnorm{\zeta_{2}}{},\frac{1}{1-\dnorm{\delta_{2}}{}},\frac{1}{1-\dnorm{\delta_{2}+\zeta_{2}}{}}},\notag\\
 &\widehat C_2\coloneqq (1+2^{\frac{\chi_2\chi_4}2-1})\left((n+1)
 \widetilde D_{1}\widetilde C_{\clN1}\right)^{\frac{\chi_2\chi_4}2}
 =\ovlineC{n,\widetilde C_{\clN1},\dnorm{\zeta_{1}}{},\dnorm{\zeta_{2}}{},\frac{1}{1-\dnorm{\delta_{2}}{}}}.\notag
 \end{align}

Since~$\widehat C_1\ge2$ holds for~$\chi_2\ge0$ we can write
\[
 1+\gamma_2^{-\chi_4}\le
 \widehat C_1+\widehat C_2\left(1+\widetilde\gamma_0^{-\chi_5 }\right)^{\frac{\chi_2\chi_4}2}
 \widehat\gamma_0^{-\frac{\chi_2\chi_4}2},\quad\mbox{for}\quad\chi_2\ge0.
\]
Further, we see that 
 \begin{align}
1+\gamma_2^{-\chi_4}&\le
 \widehat C_1+\widehat C_2(1+2^{\frac{\chi_2\chi_4}2-1})
 \left(\widehat\gamma_0^{-\frac{\chi_2\chi_4}2}+\widetilde\gamma_0^{-\frac{\chi_5\chi_2\chi_4}2 }
 \widehat\gamma_0^{-\frac{\chi_2\chi_4}2}\right)\notag
\end{align}
and
 \begin{align}
&\widetilde\gamma_0^{-\frac{\chi_5\chi_2\chi_4}2 }\widehat\gamma_0^{-\frac{\chi_2\chi_4}2}
=(n+1)^{\frac{\chi_5\chi_2\chi_4}2}\widehat\gamma_0^{-\frac{(\chi_5+1)\chi_2\chi_4}2 },\notag\\
&\widehat\gamma_0^{-\frac{\chi_2\chi_4}2}\le 1+\widehat\gamma_0^{-\frac{(\chi_5+1)\chi_2\chi_4}2},\notag
\end{align}
from which we obtain
\begin{subequations}\label{1-gamma-2}
\begin{align}
 1+\gamma_2^{-\chi_4}
 &\le\widehat C_1
 +\widehat C_2(1+2^{\frac{\chi_2\chi_4}2-1})\left(1+(1+(n+1)^{\frac{\chi_5\chi_2\chi_4}2})
 \widehat\gamma_0^{-\frac{(\chi_5+1)\chi_2\chi_4}2}\right)\notag\\
 &\le\widehat C_3
 +\widehat C_4\widehat\gamma_0^{-\frac{(\chi_5+1)\chi_2\chi_4}2},
 \intertext{with}
 &\hspace*{-4em}\widehat C_3\coloneqq\widehat C_1+\widehat C_2(1+2^{\frac{\chi_2\chi_4}2-1}),\qquad
 \hspace*{0em}\widehat C_4\coloneqq \widehat C_2(1+2^{\frac{\chi_2\chi_4}2-1})(1+(n+1)^{\frac{\chi_5\chi_2\chi_4}2})
 .
 \end{align}
\end{subequations}

Therefore~\eqref{1-gamma} and~\eqref{1-gamma-2} lead us to
\begin{subequations}\label{1-gamma-3}
 \begin{align}
&\left(1+\gamma_2^{-\chi_4}\right)\left(1+\widetilde\gamma_0^{-\chi_5 }\right)
\le\widehat C_5\left(1+\widehat\gamma_0^{-\chi_5 }\right)
\left(1+\widehat\gamma_0^{-\frac{(\chi_5+1)\chi_2\chi_4}2  }\right),
\intertext{with, recalling that~$\chi_2\chi_4\ge0$,}
&\widehat C_5\coloneqq (n+1)^{\chi_5 }+\widehat C_3+\widehat C_4
=\ovlineC{n,\widetilde C_{\clN1},\dnorm{\zeta_{1}}{},\dnorm{\zeta_{2}}{},
\frac{1}{1-\dnorm{\delta_{2}}{}},\frac{1}{1-\dnorm{\zeta_{2}+\delta_{2}}{}}}.
  \end{align}
\end{subequations}

Hence~\eqref{NNchis}, \eqref{CoefDA}, and~\eqref{1-gamma-3} give us
\begin{align}
  &2\Bigl( \fkN_y(t,z_1),Az_1\Bigr)_{H}
 \le \widehat\gamma_0 \norm{z_1}{\rmD(A)}^{2}\notag\\
 &\hspace*{.0em}  +\widetilde C_{\fkN2}\left(1+\widehat\gamma_0^{-\chi_5 }\right)\!
\left(1+\widehat\gamma_0^{-\frac{(\chi_5+1)\chi_2\chi_4}2  }\right)\!
   \left(1+\norm{y}{V}^{\chi_1} \right)\!
  \left(1+\norm{y}{\rmD(A)}^{\chi_2}\right)\!
  \left(1+\norm{z_1}{V}^{\chi_3}\right)\!
  \norm{z_1}{V}^2,\notag
 \end{align}
with~$\widetilde C_{\fkN2}\coloneqq n\widetilde D_{2}\widehat C_5\widetilde C_{\clN1}
  =\ovlineC{n,\widetilde C_{\clN1},\dnorm{\zeta_{1}}{},
  \dnorm{\zeta_{2}}{},\frac{1}{1-\dnorm{\delta_{2}}{}},\frac{1}{1-\dnorm{\zeta_{2}+\delta_{2}}{}}}$.
This ends the proof of Proposition~\ref{P:fkN}.\qed

\subsection{Proof of Proposition~\ref{P:restOblPro}}

Recall that~$\rmD(A^\xi)\xhookrightarrow{}H$, for~$\xi\ge0$, and~$H=\widetilde\clW_{S}\oplus\clW_{S}^\perp$.
We prove firstly that~$\widetilde\clW_{S}$ and~$\clW_{S}^\perp\cap \rmD(A^\xi)$
are closed subspaces of~$\rmD(A^\xi)$. Clearly~$\widetilde\clW_{S}$
is closed, because it is finite-dimensional.
Let now be an arbitrary sequence~$(h_n)_{n\in\bbN_0}$ in~$\clW_{S}^\perp\cap \rmD(A^\xi)$
and a vector~$\overline h\in \rmD(A^\xi)$,
so that
$\norm{h_n-\overline h}{\rmD(A^\xi)}\to 0$, as~$n\to+\infty$.
Since~$\norm{h_n-\overline h}{H}\le C\norm{h_n-\overline h}{\rmD(A^\xi)}$, for a suitable constant~$C>0$,
it follows that~$\norm{h_n-\overline h}{H}\to 0$, and since~$\clW_{S}^\perp$ is closed in~$H$,
it follows that~$\overline h\in \clW_{S}^\perp$. Thus~$\overline h\in \clW_{S}^\perp\cap \rmD(A^\xi)$,
and we can conclude 
that~$\clW_{S}^\perp\cap \rmD(A^\xi)$ is a closed subspace of~$V$.
Next we observe that~$\rmD(A^\xi)=\widetilde\clW_{S}\oplus(\clW_{S}^\perp\cap \rmD(A^\xi))$,
which is a straightforward consequence of~$H=\widetilde\clW_{S}\oplus\clW_{S}^\perp$.
To show that the oblique
projection~$P_{\widetilde\clW_{S}}^{\clW_{S}^\perp\cap \rmD(A^\xi)}$ in~$\rmD(A^\xi)$ coincides with the
restriction~$\left.P_{\widetilde\clW_{S}}^{\clW_{S}^\perp}\right|_{\rmD(A^\xi)}$ of
the oblique
projection~$P_{\widetilde\clW_{S}}^{\clW_{S}^\perp}$ in~$H$, it is enough to observe that by definition
of a projection we have that
\begin{align}
  \left.P_{\widetilde\clW_{S}}^{\clW_{S}^\perp\cap \rmD(A^\xi)}\right|_{\rmD(A^\xi)}w_1=w_1
 =P_{\widetilde\clW_{S}}^{\clW_{S}^\perp}w_1,\quad\mbox{for all}\quad w_1\in \widetilde\clW_{S},\notag\\
  \left.P_{\widetilde\clW_{S}}^{\clW_{S}^\perp\cap \rmD(A^\xi)}\right|_{\rmD(A^\xi)}w_2=0
 =P_{\widetilde\clW_{S}}^{\clW_{S}^\perp}w_2,\quad\mbox{for all}
 \quad w_2\in\clW_{S}^\perp\cap \rmD(A^\xi).\notag
\end{align}

Finally, we have~$P_{\widetilde\clW_{S}}^{\clW_{S}^\perp\cap \rmD(A^\xi)}\in\clL(\rmD(A^\xi))$
because (oblique) projections are continuous, 
see~\cite[Sect.~2.4,~Thm. 2.10]{Brezis11}.\qed

\subsection{Proof of Proposition~\ref{P:extOblPro}}\label{sSproofP:extOblPro}

 It is clear that~$P_{\widetilde\clW_{S}}^{\clW_{S}^\perp}\Bigr|^{\rmD(A^{-\xi})}$
is an extension of the oblique projection~$P_{\widetilde\clW_{S}}^{\clW_{S}^\perp}\in\clL(H)$
to~$\rmD(A^{-\xi})\supseteq H$,
because for~$z\in H$ we have
that $\left\langle P_{\clW_{S}}^{\widetilde\clW_{S}^\perp}\Bigr|^{\rmD(A^{-\xi})}
z,w\right\rangle_{\rmD(A^{-\xi}),\rmD(A^{\xi})}=
 (z,\clP_{\widetilde\clW_{S}}^{\clW_{S}^\perp}w)_{H}
 =(P_{\clW_{S}}^{\widetilde\clW_{S}^\perp}z,w)_{H}$, where for the last identity we have
 used~$P_{\clW_{S}}^{\widetilde\clW_{S}^\perp}=(\clP_{\widetilde\clW_{S}}^{\clW_{S}^\perp})^*$;
 see~\cite[Lem.~3.8]{RodSturm20}. By the relation~\eqref{extOblPro} and Proposition~\ref{P:extOblPro}
 it follows the inequality
 $\norm{P_{\clW_{S}}^{\widetilde\clW_{S}^\perp}\Bigr|^{\rmD(A^{-\xi})}}{\clL(\rmD(A^{-\xi}))}
  \le\norm{P_{\widetilde\clW_{S}}^{\clW_{S}^\perp}\Bigr|_{\rmD(A^{\xi})}}{\clL(\rmD(A^{\xi}))}<+\infty$,
  and afterwards the same relation~\eqref{extOblPro} gives us the converse inequality. Hence we obtain the
  stated norm identity. Finally, by definition of the adjoint operator
  we also have the stated adjoint identity.
\qed

\subsection{Proof of Proposition~\ref{P:maxpoly1r}}

Observe that, since~$\fks\in(0,1)$, we have that~$g(\tau)\coloneqq-\eta_1\tau+\eta_2\tau^\fks$
satisfies~$g(0)=0$, $\lim\limits_{\tau\to+\infty}g(\tau)=-\infty$, and
$\frac{\ed}{\ed\tau}\rest{\tau=\tau_0}g(\tau)=-\eta_1+\fks\eta_2\tau_0^{\fks-1} $, for~$\tau_0>0$.
In particular,~$g$ is differentiable at each~$\tau_0>0$. Furthermore,
$\frac{\ed}{\ed\tau}\rest{\tau=\tau_0}g(\tau)>0\Longleftrightarrow \tau_0^{\fks-1}
>\eta_1(\fks\eta_2)^{-1}\Longleftrightarrow
\tau_0^{1-\fks}<(\fks\eta_2)\eta_1^{-1}\Longleftrightarrow\tau_0
<(\fks\eta_2)^\frac{1}{1-\fks}\eta_1^{-\frac{1}{1-\fks}}$.
Thus $g(\tau)$ strictly increases only if~$\tau\in(0,\overline\tau)$,
with~$\overline\tau\coloneqq(\fks\eta_2)^\frac{1}{1-\fks}\eta_1^{-\frac{1}{1-\fks}}
=(\fks\eta_2)^\frac{1}{1-\fks}\eta_1^{\frac{1}{\fks-1}}$. 
Analogously we find that~$\frac{\ed}{\ed\tau}\rest{\tau=\tau_0}g(\tau)>0\Longleftrightarrow \tau_0>\overline\tau$.
Necessarily, the maximum is attained at~$\overline\tau>0$, and can be computed as
\begin{align}
-\eta_1\overline\tau+\eta_2\overline\tau^\fks& =-\eta_1(\fks \eta_2)^\frac{1}{1-\fks}\eta_1^\frac{1}{\fks-1}
 + \eta_2\left((\fks \eta_2)^\frac{1}{1-\fks}\eta_1^\frac{1}{\fks-1}\right)^{\fks}
 = \eta_2^\frac{1}{1-\fks}\eta_1^\frac{\fks}{\fks-1}\left(-\fks^\frac{1}{1-\fks}+\fks^\frac{\fks}{1-\fks}\right).\notag
 \end{align}
Thus, $-\eta_1\overline\tau+\eta_2\overline\tau^\fks
=(1-\fks)\fks^\frac{\fks}{1-\fks}\eta_2^\frac{1}{1-\fks}\eta_1^\frac{\fks}{\fks-1}$, which finishes the proof.\qed

\subsection{Proof of Proposition~\ref{P:ode-stab0}}

For the sake of simplicity we shall omit the subscript in the usual norm in~$\bbR$,
that is, $\norm{\Bigcdot}{}\coloneqq\norm{\Bigcdot}{\bbR}$.
The solution of~\eqref{ode-nonl-p0} is given by
\begin{equation}\label{ode-v1}
 v(t)=\ex^{-\overline\mu (t-s)+\int_s^t\norm{h(\tau)}{}\,\ed\tau}v(s),\quad t\ge s\ge 0,\quad v(0)=v_0.
\end{equation}

Observe that the exponent satisfies, using~\eqref{ode-h},
\begin{align}
-\overline\mu (t-s)+\textstyle\int_s^t\norm{h(\tau)}{}\,\ed\tau
&\le-\overline\mu (t-s)+(t-s)^\frac{\fkr-1}{\fkr}\left(\textstyle\int_s^t
\norm{h(\tau)}{}^\fkr\,\ed\tau\right)^\frac{1}{\fkr}\notag\\
&\le-\overline\mu (t-s)+(t-s)^\frac{\fkr-1}{\fkr}\left(\textstyle\int_s^{s+T\lceil \tfrac{t-s}{T}\rceil}
\norm{h(\tau)}{}^\fkr\,\ed\tau\right)^\frac{1}{\fkr}\notag\\
&\le-\overline\mu (t-s)+(t-s)^\frac{\fkr-1}{\fkr}\left( \lceil \tfrac{t-s}{T}\rceil
C_h^\fkr \right)^\frac{1}{\fkr},\label{ode-v-exp0}
\end{align}
where~$\lceil r\rceil\in\bbN_0$ is the positive integer defined by
\begin{equation}\label{ceil}
  r\le\lceil r\rceil < r+1.
\end{equation}

From~\eqref{ode-v-exp0} and~\eqref{ceil}, it follows that
\begin{align}
-\overline\mu (t-s)+\textstyle\int_s^t\norm{h(\tau)}{}\,\ed\tau
&\le-\overline\mu (t-s)+(t-s)^\frac{\fkr-1}{\fkr}\left( \tfrac{t-s}{T}+1\right)^\frac{1}{\fkr} C_h\notag\\
&\le T^{-\frac{1}{\fkr}}(-\overline\mu T^\frac{1}{\fkr}+C_h )(t-s)+(t-s)^\frac{\fkr-1}{\fkr} C_h,\label{ode-v-exp1}
\end{align}
where we have used~$\left( \tfrac{t-s}{T}+1\right)^\frac{1}{\fkr}\le (\tfrac{t-s}{T})^\frac{1}{\fkr}+1$,
since~$\fkr>1$, see~\cite[Proposition~2.6]{PhanRod17}.

By~\eqref{ode-condstab0}, we have that
\begin{equation}\label{mu-large}
\widehat\mu\coloneqq T^{-\frac{1}{\fkr}}(\overline\mu T^\frac{1}{\fkr}-C_h ) \ge
\max\left\{2\tfrac{\fkr-1}{\fkr}\left(\tfrac{C_h^\fkr}{\fkr\log(\varrho)}\right)^\frac{1}{\fkr-1}, 2\mu\right\}>0,
\end{equation}
from which, together with~\eqref{ode-v-exp1} and Proposition~\ref{P:maxpoly1r}, we obtain 
\begin{align}
-\overline\mu (t-s)+\textstyle\int_s^t\norm{h(\tau)}{}\,\ed\tau
&\le -\tfrac{1}{2}\widehat\mu(t-s)-\tfrac{1}{2}\widehat\mu(t-s)+(t-s)^\frac{\fkr-1}{\fkr} C_h\notag\\
&\le -\tfrac{\widehat\mu}{2}(t-s) +\tfrac{1}{\fkr}(\tfrac{\fkr-1}{\fkr})^{\fkr-1}C_h^\fkr
(\tfrac{\widehat\mu}{2})^{1-\fkr},\label{ode-v-exp2}
\end{align}
because by Proposition~\ref{P:maxpoly1r}, with~$\fks=\frac{\fkr-1}{\fkr}$ and~$\eta_1
=\tfrac{\widehat\mu}{2}$, $\eta_2=C_h$,
\[
 \max_{t-s\ge0}\{-\eta_1(t-s)+(t-s)^\frac{\fkr-1}{\fkr} \eta_2\}
 =(1-\fks)\fks^\frac{\fks}{1-\fks}\eta_2^\frac{1}{1-\fks}\eta_1^\frac{\fks}{\fks-1}
 =\tfrac{1}{\fkr}(\tfrac{\fkr-1}{\fkr})^{\fkr-1}\eta_2^{\fkr}\eta_1^{1-\fkr}.
\]

Therefore, from~\eqref{ode-v1}, ~\eqref{mu-large}, and~\eqref{ode-v-exp2}, we derive that
\begin{equation}\notag
 \norm{v(t)}{}\le\ex^{\tfrac{C_h^\fkr}{\fkr}\left(\tfrac{2(\fkr-1)}{\fkr}\right)^{\fkr-1}
 \widehat\mu^{1-\fkr}}\ex^{-\frac{\widehat\mu}{2}(t-s)}\norm{v(s)}{}
 \le\varrho\ex^{-\mu(t-s)}\norm{v(s)}{},
\end{equation}
which gives us~\eqref{ode-v1}. Indeed, observe that
\begin{align}
&\ex^{\tfrac{C_h^\fkr}{\fkr}\left(\tfrac{2(\fkr-1)}{\fkr}\right)^{\fkr-1}\widehat\mu^{1-\fkr}}\le\varrho
\quad\Longleftrightarrow\quad
\tfrac{C_h^\fkr}{\fkr}\left(\tfrac{2(\fkr-1)}{\fkr}\right)^{\fkr-1}\widehat\mu^{1-\fkr}\le\log(\varrho)\notag\\
\Longleftrightarrow\quad&
\tfrac{C_h^\fkr}{\fkr\log(\varrho)}\left(\tfrac{2(\fkr-1)}{\fkr}\right)^{\fkr-1}\le\widehat\mu^{\fkr-1}
\quad\Longleftrightarrow\quad
\widehat\mu\ge\left(\tfrac{C_h^\fkr}{\fkr\log(\varrho)}\right)^\frac{1}{\fkr-1}\tfrac{2(\fkr-1)}{\fkr},\notag
\end{align}
and the last inequality follows from~\eqref{mu-large}, which also gives us~$\frac{\widehat\mu}{2}>\mu$.
\qed

\subsection{Proof of Proposition~\ref{P:ode-stab}}

We shall use a fixed point argument, through the contraction principle, in the closed subset
\[
 \clZ_{\varrho,\norm{\varpi_0}{}}^{\mu_0}\coloneqq\left\{g\in L^\infty(\bbR_0,\bbR)\left|\;\;
 \norm{\ex^{\mu_0 t}g(t)}{}\le\varrho\norm{\varpi_0}{}\right.\!\right\}
\]
of the Banach space           
\begin{align}
 \clZ^\mu_0&\coloneqq\left\{g\in L^\infty(\bbR_0,\bbR)\left|\;\;
 \ex^{\mu_0(\Bigcdot)}g\in L^\infty(\bbR_0,\bbR)\right.\!\right\},
 \qquad\norm{g}{\clZ^{\mu_0}}\coloneqq\sup_{t\ge0}\norm{\ex^{{\mu_0} t}g(t)}{}.\notag
\end{align}
We show  now that, since~\eqref{ode-condstab} holds true, the mapping
\begin{equation}\notag
 \varPsi\colon\clZ_{\varrho,\norm{\varpi_0}{}}^{\mu_0}\to\clZ_{\varrho,\norm{\varpi_0}{}}^{\mu_0},\qquad
 \breve \varpi\mapsto \varpi,
\end{equation}
where~$\varpi$ solves
\begin{equation}\label{ode-Psi}
 \dot \varpi=-(\overline\mu-\norm{h}{})\varpi+\norm{h}{}\norm{\breve\varpi}{}^p\breve \varpi,\quad \varpi(0)=\varpi_0,
\end{equation}
is well defined and is a contraction in~$\clZ_{\varrho,\norm{\varpi_0}{}}^{{\mu_0}}$.

We look at~\eqref{ode-Psi} as a perturbation of the nominal linear system
\begin{equation}\label{ode-Psi-nom}
 \dot v=-(\overline\mu-\norm{h}{})v,\quad v(0)=v_0=\varpi_0\in\bbR.
\end{equation}
Note that~\eqref{ode-condstab} implies that
\[
 \overline\mu  \ge \max\left\{2\tfrac{\fkr-1}{\fkr}
 \left(\tfrac{C_h^\fkr}{\fkr\log\left(\varrho^\frac12\right)}\right)^\frac{1}{\fkr-1}, 4{\mu_0}\right\}
 -T^{-\frac{1}{\fkr}}C_h
 \]
which we use together with Proposition~\ref{P:ode-stab0}  to conclude that the solution
\[
 v(t)\eqqcolon\clS(t,s)v(s)
\]
 of~\eqref{ode-Psi-nom} satisfies
\begin{equation}\label{odev2-exp}
\norm{v(t)}{}=\norm{\clS(t,s)v(s)}{}\le\varrho^\frac{1}{2}\ex^{-2{\mu_0}(t-s)}\norm{v(s)}{},
\quad t\ge s\ge 0,\quad v(0)=v_0.
\end{equation}

By the Duhamel formula we have that the solution~$w$ of~\eqref{ode-Psi} is given as
\begin{equation}\label{ode-Psi-sol}
\varpi(t)=\clS(t,s)\varpi(s)+\textstyle\int_s^t\clS(t,\tau)
\norm{h(\tau)}{}\norm{\breve\varpi(\tau)}{}^p\breve\varpi(\tau)\,\ed\tau,
\quad \varpi=\varPsi(\breve\varpi).
\end{equation}

\begin{enumerate}[noitemsep,topsep=5pt,parsep=5pt,partopsep=0pt,leftmargin=0em]
\renewcommand{\theenumi}{{\sf\arabic{enumi}}} 
 \renewcommand{\labelenumi}{} 
 \item \textcircled{\bf s}~Step~\theenumi:\label{st-PsifixZ}
 {\em $\varPsi$ maps~$\clZ_{\varrho,\norm{\varpi_0}{}}^{\mu_0}$ into itself, if~$\norm{\varpi_0}{}<\varrho R$.}
We observe that~\eqref{odev2-exp} and~\eqref{ode-Psi-sol} give us the estimate
\begin{equation}\label{ode-Psi-1}
 \norm{\varpi(t)}{}\le\varrho^\frac{1}{2}\ex^{-2{\mu_0} t}\norm{\varpi_0}{}
 +\textstyle\int_0^t\varrho^\frac{1}{2}\ex^{-2{\mu_0}(t-\tau)}\norm{h(\tau)}{}
 \norm{\breve\varpi(\tau)}{}^{p+1}\,\ed\tau.
\end{equation}
Next, we also find, since~$\breve\varpi\in\clZ_{\varrho,\norm{\varpi_0}{}}^{\mu_0}$,
 \begin{align}
 &\textstyle\int_0^t\ex^{-2{\mu_0}(t-\tau)}\norm{h(\tau)}{}\norm{\breve\varpi(\tau)}{}^{p+1}\,\ed\tau
    \le\varrho^{p+1}\norm{\varpi_0}{}^{p+1}\textstyle\int_0^t\ex^{-2{\mu_0}(t-\tau)}
    \ex^{-{\mu_0} (p+1)\tau }\norm{h(\tau)}{}\,\ed\tau
 \notag\\
 &\hspace*{2em}\le\varrho^{p+1}\norm{\varpi_0}{}^{p+1} \ex^{-{\mu_0} t}
 \textstyle\int_0^t\ex^{-{\mu_0} (t-\tau)}\ex^{-{\mu_0} p\tau }\norm{h(\tau)}{}\,\ed\tau
 \notag\\
 &\hspace*{2em}\le\varrho^{p+1}\norm{\varpi_0}{}^{p+1} \ex^{-{\mu_0} t}
 \left(\textstyle\int_0^t\ex^{-\frac{\fkr}{\fkr-1}{\mu_0} (t-\tau)}\,\ed\tau\right)^{\frac{\fkr-1}{\fkr}}
 \left(\textstyle\int_0^t\ex^{-\fkr {\mu_0} p\tau }\norm{h(\tau)}{}^\fkr\,\ed\tau\right)^{\frac{1}{\fkr}}
 \notag\\
 &\hspace*{2em}\le\varrho^{p+1}\norm{\varpi_0}{}^{p+1} \ex^{-{\mu_0} t}
 \left(\tfrac{\fkr-1}{\fkr{\mu_0}}\right)^{\frac{\fkr-1}{\fkr}}
 \left(\textstyle\sum\limits_{i=1}^{\lceil \frac{t}{T}\rceil}\ex^{-\fkr {\mu_0} p(i-1)T }
 \int_{(i-1)T}^{iT} \norm{h(\tau)}{}^\fkr\,\ed\tau\right)^{\frac{1}{\fkr}}
 \notag\\
 &\hspace*{2em}\le\varrho^{p+1}\norm{\varpi_0}{}^{p+1} \ex^{-{\mu_0} t}
 \left(\tfrac{\fkr-1}{\fkr{\mu_0}}\right)^{\frac{\fkr-1}{\fkr}}C_h
 \left(\textstyle\sum\limits_{i=1}^{\lceil \frac{t}{T}\rceil}\ex^{-\fkr{\mu_0} p(i-1)T }\right)^{\frac{1}{\fkr}}
 \notag\\
 &\hspace*{2em}\le\varrho^{p+1}\norm{\varpi_0}{}^{p+1}C_h
 \left(\tfrac{1}{1-\ex^{-\fkr {\mu_0} pT }} \right)^{\frac{1}{\fkr}}
 \left(\tfrac{\fkr-1}{\fkr{\mu_0}}\right)^{\frac{\fkr-1}{\fkr}} \ex^{-{\mu_0} t}.\label{ode2}
  \end{align}

By combining~\eqref{ode-Psi-1} with~\eqref{ode2}, we arrive at
\begin{align}
 \ex^{{\mu_0} t}\norm{\varpi(t)}{}&\le\varrho^\frac{1}{2}\ex^{-{\mu_0} t}\norm{\varpi_0}{}
 +\varrho^{p+\frac{3}{2}}\norm{\varpi_0}{}^{p+1}C_h\left(\tfrac{1}{1-\ex^{-\fkr {\mu_0} pT }} \right)^{\frac{1}{\fkr}}
 \left(\tfrac{\fkr-1}{\fkr}\right)^{\frac{\fkr-1}{\fkr}}  {\mu_0}^{\frac{1-\fkr}{\fkr}} \notag\\
 &\le\varrho^\frac{1}{2}\left(1
 +\varrho^{p+1}\norm{\varpi_0}{}^{p}C_h\left(\tfrac{1}{1-\ex^{-\fkr {\mu_0} pT }} \right)^{\frac{1}{\fkr}}
 \left(\tfrac{\fkr-1}{\fkr}\right)^{\frac{\fkr-1}{\fkr}}  {\mu_0}^{\frac{1-\fkr}{\fkr}}\right)
 \norm{\varpi_0}{}.\label{ode-comb}
 \end{align}
 Next we use~\eqref{ode-nonl-mumu0} and~$\norm{\varpi_0}{}\le \varrho R$ to obtain
 \begin{subequations}\label{ode-nonl-mumu0-1}
 \begin{align}
 \tfrac{1}{1-\ex^{-\fkr{\mu_0} pT }}\le\tfrac{1}{1-\ex^{-{\mu_0} pT }}\le 2,\label{log2} 
 \end{align}
and
 \begin{align}
&1 +\varrho^{p+1}\norm{\varpi_0}{}^{p}C_h\left(\tfrac{1}{1-\ex^{-\fkr {\mu_0} pT }} \right)^{\frac{1}{\fkr}}
 \left(\tfrac{\fkr-1}{\fkr}\right)^{\frac{\fkr-1}{\fkr}}  {\mu_0}^{\frac{1-\fkr}{\fkr}} 
\le1 +\varrho^{2p+1}R^{p}C_h\left(\tfrac{\fkr-1}{\fkr}\right)^{\frac{\fkr-1}{\fkr}}
{\mu_0}^{\frac{1-\fkr}{\fkr}}2^\frac{1}{\fkr}\notag\\
&\hspace*{1em}\le1 +\varrho^{2p+1}R^{p}C_h\left(\tfrac{\fkr-1}{\fkr}\right)^{\frac{\fkr-1}{\fkr}}2^\frac{1}{\fkr}
\left(\tfrac{\varrho^{2p+1}R^{p}C_h}{\varrho^\frac12-1}\right)^{-1}2^{-\frac{1}{\fkr}}
\left(\tfrac{\fkr-1}{\fkr}\right)^{\frac{1-\fkr}{\fkr}}
=1 +\left(\tfrac{1}{\varrho^\frac12-1}\right)^{-1}\notag\\
&\hspace*{1em}=\varrho^\frac12.
\end{align}
 \end{subequations}

From~\eqref{ode-comb} and~\eqref{ode-nonl-mumu0-1}, we
find~$\ex^{{\mu_0} t}\norm{\varpi(t)}{}\le\varrho\norm{\varpi_0}{}$, hence
$\varpi=\varPsi(\breve\varpi)\in\clZ_{\varrho,\norm{\varpi_0}{}}^{\mu_0}$.

\item \textcircled{\bf s}~Step~\theenumi:\label{st-PsicontrZ}
 {\em $\varPsi$ is a contraction in~$\clZ_{\varrho,\norm{\varpi_0}{}}^{\mu_0}$, if~$\norm{\varpi_0}{}<\varrho R$.}
 For an arbitrary given~$(\breve\varpi_1,\breve\varpi_2)\in\clZ_{\varrho,\norm{\varpi_0}{}}^{\mu_0}
 \times\clZ_{\varrho,\norm{\varpi_0}{}}^{\mu_0}$, we have that the difference
 \[
 D\coloneqq\varPsi(\breve\varpi_1)-\varPsi(\breve\varpi_2) 
 \]
solves
\[
 \dot D=-(\overline\mu-\norm{h}{})D
 +\norm{h}{}\left(\norm{\breve\varpi_1}{}^p\breve\varpi_1-\norm{\breve\varpi_2}{}^p\breve\varpi_2\right),\quad D(0)=0,
\]

By the Duhamel formula and the Mean Value Theorem, we obtain
\begin{align}
\norm{D(t)}{} &=\norm{\clS(t,0)D(0)}{}
+\norm{\textstyle\int_0^t\clS(t,\tau)\norm{h(\tau)}{}
\norm{\norm{\breve\varpi_1}{}^p\breve\varpi_1-\norm{\breve\varpi_2}{}^p\breve\varpi_2}{}
\,\ed\tau}{}\notag\\
&\hspace*{0em}\le\varrho^\frac12 (p+1)\textstyle\int_0^t\ex^{-{\mu_0}(t-\tau)}\norm{h(\tau)}{}
\left(\norm{\breve\varpi_1(\tau)}{}^p+\norm{\breve\varpi_2(\tau)}{}^p\right)
\norm{\breve\varpi_1(\tau)-\breve\varpi_2(\tau)}{}\,\ed\tau\notag\\
&\hspace*{0em}\le\varrho^\frac12 (p+1)
\norm{\breve\varpi_1-\breve\varpi_2}{\clZ_{\varrho,\norm{\varpi_0}{}}^{\mu_0}}\ex^{-{\mu_0} t}
\textstyle\int_0^t\norm{h(\tau)}{}\left(\norm{\breve\varpi_1(\tau)}{}^p+\norm{\breve\varpi_2(\tau)}{}^p\right)
\,\ed\tau\notag\\
 &\hspace*{0em}\le2\varrho^{p+\frac12} (p+1)\norm{\varpi_0}{}^p
 \norm{\breve\varpi_1-\breve\varpi_2}{\clZ_{\varrho,\norm{\varpi_0}{}}^{\mu_0}}
 \ex^{-{\mu_0} t}\textstyle\int_0^t\ex^{-{\mu_0}\tau p}\norm{h(\tau)}{}\,\ed\tau\label{D-1}
 \end{align}
 Note that
 \begin{align}
 &\textstyle\int_0^t\ex^{-{\mu_0}\tau p}\norm{h(\tau)}{}\,\ed\tau
 =\textstyle\int_0^t\ex^{-\frac{\fkr-1}{\fkr}{\mu_0}\tau p}\ex^{-\frac{1}{\fkr}{\mu_0}\tau p}
 \norm{h(\tau)}{}\,\ed\tau\notag\\
 &\hspace*{2em}\le\left(\textstyle\int_0^t\ex^{-{\mu_0}\tau p}\,\ed\tau\right)^\frac{\fkr-1}{\fkr}
 \left(\textstyle\int_0^t\ex^{-{\mu_0}\tau p}\norm{h(\tau)}{}^\fkr\,\ed\tau\right)^\frac{1}{\fkr}\notag\\
 &\hspace*{2em} \le\left({\mu_0} p\right)^\frac{1-\fkr}{\fkr}
  \left(\textstyle\sum\limits_{i=1}^{\lceil\frac{t}{T}\rceil} \ex^{-{\mu_0} p(i-1)T}\int_{(i-1)T}^{iT}
  \norm{h(\tau)}{}^\fkr\,\ed\tau\right)^\frac{1}{\fkr}\notag\\
  &\hspace*{2em}\le\left({\mu_0} p\right)^\frac{1-\fkr}{\fkr}C_h
  \left(\textstyle\sum\limits_{i=1}^{\lceil\frac{t}{T}\rceil} \ex^{-{\mu_0} p(i-1)T}\right)^\frac{1}{\fkr}
  \le\left({\mu_0} p\right)^\frac{1-\fkr}{\fkr}C_h
  \left(\tfrac{1}{1- \ex^{-{\mu_0} pT}}\right)^\frac{1}{\fkr}.\label{D-2}
  \end{align}
 
 From~\eqref{D-1} and~\eqref{D-2},
 \begin{align}
 \ex^{{\mu_0} t}\norm{D(t)}{}&\le2\varrho^{p+\frac12} (p+1) p^\frac{1-\fkr}{\fkr}
 C_h  \left(\tfrac{1}{1- \ex^{-{\mu_0} pT}}\right)^\frac{1}{\fkr}\norm{\varpi_0}{}^p{\mu_0}^\frac{1-\fkr}{\fkr}
 \norm{\breve\varpi_1-\breve\varpi_2}{\clZ_{\varrho,\norm{\varpi_0}{}}^{\mu_0}},\notag
 \end{align}
 which together~$\norm{\varpi_0}{}\le\varrho R$ and~${\mu_0}\ge\frac{\log(2)}{pT}$, see~\eqref{ode-nonl-mumu0},
 give us~$\tfrac{1}{1- \ex^{-{\mu_0} pT}}\le2$ and
 \begin{align}
 &\ex^{{\mu_0} t}\norm{D(t)}{}\le2^\frac{\fkr+1}{\fkr}\varrho^{2p+\frac12} (p+1) p^\frac{1-\fkr}{\fkr}
 C_h  R^p{\mu_0}^\frac{1-\fkr}{\fkr}
 \norm{\breve\varpi_1-\breve\varpi_2}{\clZ_{\varrho,\norm{\varpi_0}{}}^{\mu_0}}\notag\\
 &\le2^\frac{\fkr+1}{\fkr}\varrho^{2p+\frac12} (p+1) p^\frac{1-\fkr}{\fkr}
 C_h  R^p{\mu_0}^\frac{1-\fkr}{\fkr}
 \norm{\breve\varpi_1-\breve\varpi_2}{\clZ_{\varrho,\norm{\varpi_0}{}}^{\mu_0}}\notag\\
 &\le2^\frac{\fkr+1}{\fkr}\varrho^{2p+\frac12} (p+1) p^\frac{1-\fkr}{\fkr}
 C_h  R^p\left(2^\frac{\fkr+1}{\fkr-1} \left(
  \varrho^{2p+\frac12} C_h\tfrac{p+1}{p}R^pc\right)^\frac{\fkr}{\fkr-1} p^\frac{1}{\fkr-1}\right)^\frac{1-\fkr}{\fkr}
\norm{\breve\varpi_1-\breve\varpi_2}{\clZ_{\varrho,\norm{\varpi_0}{}}^{\mu_0}}\notag\\
&\le  c^{-1} p^\frac{1-\fkr}{\fkr}
 \left( \left(
  \tfrac{1}{p}\right)^\frac{\fkr}{\fkr-1} p^\frac{1}{\fkr-1}\right)^\frac{1-\fkr}{\fkr}
\norm{\breve\varpi_1-\breve\varpi_2}{\clZ_{\varrho,\norm{w_0}{}}^{\mu_0}}
=c^{-1}\norm{\breve\varpi_1-\breve\varpi_2}{\clZ_{\varrho,\norm{\varpi_0}{}}^{\mu_0}}\label{D-3}
 \end{align}
with~$c>1$ as in~\eqref{ode-nonl-mumu0}. Therefore, \eqref{D-3} implies that
\begin{align}
 \norm{\varPsi(\breve\varpi_1)-\varPsi(\breve\varpi_2)}{\clZ_{\varrho,\norm{\varpi_0}{}}^{\mu_0}}
 &=\norm{D}{\clZ_{\varrho,\norm{\varpi_0}{}}^{\mu_0}}\le c^{-1}\norm{d}{\clZ_{\varrho,\norm{\varpi_0}{}}^{\mu_0}}
 =c^{-1}\norm{\breve\varpi_1-\breve\varpi_2}{\clZ_{\varrho,\norm{\varpi_0}{}}^{\mu_0}},\notag
\end{align}
which shows that~$\varPsi$ is a contraction.

\item \textcircled{\bf s}~Step~\theenumi:\label{st-existence}
 {\em Existence of a solution in~$\clZ_{\varrho,\norm{\varpi_0}{}}^{\mu_0}$, if~$\norm{\varpi_0}{}<\varrho R$.} 
 By the contraction mapping principle, there exists a fixed point for~$\Psi$
 in~$\clZ_{\varrho,\norm{\varpi_0}{}}^{\mu_0}$.
 Such fixed point is a solution for~\eqref{ode-nonl}.

\item \textcircled{\bf s}~Step~\theenumi:\label{st-uniqueness}
 {\em Uniqueness of the solution in~$L^\infty(\bbR_0,\bbR)$.} The uniqueness follows
 from the fact that the right-hand side of~\eqref{ode-nonl} is locally
 Lipschitz.

 \item \textcircled{\bf s}~Step~\theenumi:\label{st-exp-s}
 {\em Estimate~\eqref{ode-stab} holds true.}
Fix~$s\ge0$ and note that~$\widetilde h(\tau)\coloneqq h(\tau+s)$ also
satisfies~\eqref{ode-h}, with~$C_{\widetilde h}\le C_h$.

Let~$\varpi_{\underline s}\coloneqq \varpi\rest{\bbR_s}$ be the restriction to~$\bbR_s=[s,+\infty)$ of
the solution~$\varpi\in\clZ_{\varrho,\norm{\varpi_0}{}}^{\mu_0}$ of~\eqref{ode-nonl},
and observe that~$z(\tau)\coloneqq \varpi_{\underline s}(\tau+s)$ solves
\begin{equation}\notag
 \tfrac{\ed}{\ed\tau} z=-(\overline\mu-\norm{\widetilde h}{})z+\norm{\widetilde h}{}\norm{z}{}^pz,
 \quad z(0)=z_0,\qquad \tau\ge0.
\end{equation}

If~$\norm{\varpi_0}{}<R$ it follows that~$\norm{z_0}{}=\norm{\varpi(s)}{}
\le\varrho\ex^{-{\mu_0} s}\norm{\varpi_0}{}\le\varrho R$.
Then, by Step~\ref{st-existence}
we have that $z\in\clZ_{\varrho,\norm{z_0}{}}^{\mu_0}$, which implies that
for~$t\ge s$,
\[
 \norm{\varpi(t)}{}=\norm{\varpi_{\underline s}(s+t-s)}{}=\norm{z(t-s)}{}
 \le\varrho\ex^{-{\mu_0} (t-s)}\norm{z(0)}{}
 =\varrho\ex^{-{\mu_0} (t-s)}\norm{\varpi(s)}{},
\]
which gives us~\eqref{ode-stab}.
 \end{enumerate}
 
 The  proof is finished.\qed

\subsection{Proof of Proposition~\ref{P:normPoly}}\label{sS:proofP:normPoly}

Let us denote by~$\tau^i=(\tau^i_1,\tau^i_2,\dots,\tau^i_d))\in\bbR^d$
the unit vector whose coordinates are~$\tau_i^i=1$ and~$\tau_j^i=0$ for~$j\ne i$.
Observe that~$\bfJ_{d,2}$ has exactly~$d+1$ vectors. The only element in~$\bfJ_{d,2}$
with~$\textstyle\sum_{j=1}^d\bfj_j= d$ is~${\bf1}^d\coloneqq(1,1,\dots,1)$.
All the other elements in~$\bfJ_{d,2}$ are of the form
${\bf1}^d+\tau^i$, $i=1,2,\dots,d$.

Let now $p\in\bbP_{\times,1}$ such that~$\fkS(p)=0$, which implies that
\[
\norm{(p,1_{\omega_{{\bf1}^d,1}^\times})}{\bbR}^{2}=0,\quad\mbox{and}\quad
\norm{(p,1_{\omega_{{\bf1}^d+\tau^i,1}^\times})}{\bbR}^{2}=0,
\mbox{ for all } i=\{1,2,\dots,d\},
\]
that is, with~$\omega_{*}\coloneqq\omega_{{\bf1}^d,1}$
\[
\textstyle\int_{\omega_{*}}p(x)\,\ed x=0,\quad\mbox{and}\quad\int_{\omega_{*}}p(x-\tau^i)\,\ed x=0.
\quad 1\le i\le d,
\]
Denoting~$\fkL_ax\coloneqq\sum_{i=1}^da_ix_i$, and~$p(x)\eqqcolon a_0+\fkL_ax$, we obtain 
\[
\textstyle\int_{\omega_{*}}c_0+\fkL_ax\,\ed x=0,\quad\mbox{and}\quad\int_{\omega_{*}}c_0+\fkL_a(x-\tau^i)\,\ed x=0,
\quad 1\le i\le d,
\]
which implies
\begin{equation}\label{d-ints=0}
\textstyle\int_{\omega_{*}}c_0+\fkL_ax\,\ed x=0,\quad\mbox{and}\quad\int_{\omega_{*}}\fkL_a\tau^i\,\ed x=0,
\quad 1\le i\le d.
\end{equation}
Note that for fixed~$i$ we have
\[
 \int_{\omega_{*}}\fkL_a\tau^i\,\ed x=0\quad\Longleftrightarrow\quad\int_{\omega_{*}}a_i\,\ed x=0
 \quad\Longleftrightarrow\quad a_i=0,
\]
which together with~\eqref{d-ints=0} leads us to~$a_i=0$, $1\le i\le d$, and $c_0=0$.

We have just shown that~$p\in\bbP_{\times,1}$ and~$\fkS(p)=0$ imply that $p=0$.
Therefore, we can conclude that~$\fkS(\Bigcdot)$ is a norm on~$\bbP_{\times,1}$.\qed

\subsection{Proof of Proposition~\ref{P:underAlpha}}\label{Apx:proofP:underAlpha}

Let $\theta=\sum\limits_{k=1}^{S_\sigma}\theta_k\Phi_k\in\widetilde\clW_{S}$,
with the auxiliary functions~$\Phi_i$ as in~\eqref{choice-tilW-adhoc}.
Then, after a translation, for the~$H$-norm we find that
\[
\norm{\Phi_k}{H}^2=\bigtimes_{j=1}^d
\norm{\sin^2(\tfrac{S\pi x_j}{L_j})}{L^2((0,\frac{L_j}{S}),\bbR)}^2=(\tfrac{3}{8S})^d\bigtimes_{j=1}^d L_j
\]
and, with~$L^\times\coloneqq\bigtimes_{j=1}^d L_j$, since the~$\Phi_i$s are pairwise orthogonal, we arrive at
\begin{align}\notag
 \norm{\theta}{H}^2={\textstyle\sum\limits_{k=1}^{S_\sigma}}\theta_k^2\norm{\Phi_k}{H}^2
 =S^{-d}(\tfrac{3}{8})^dL^\times{\textstyle\sum\limits_{k=1}^{S_\sigma}}\theta_k^2.
\end{align}

Next, for the~$V$-norm we find
\begin{align}
 \norm{\theta}{V}^2={\textstyle\sum\limits_{k=1}^{S_\sigma}}\theta_k^2\norm{\Phi_k}{V}^2
 =\nu{\textstyle\sum\limits_{k=1}^{S_\sigma}}\theta_k^2\norm{\nabla\Phi_k}{L^2(\Omega)^d}^2+\norm{\theta}{H}^2\notag
 \end{align}
and, due to
\begin{align}
\norm{\nabla\Phi_k}{L^2(\Omega)^d}^2&={\textstyle\sum\limits_{i=1}^{d}}
\norm{\tfrac{S\pi}{L_i}\sin(\tfrac{2S\pi x_i}{L_i})}{L^2((0,\frac{L_i}{S}),\bbR)}^2
\bigtimes_{i\ne j=1}^d
\norm{\sin^2(\tfrac{S\pi x_j}{L_j})}{L^2((0,\frac{L_j}{S}),\bbR)}^2\notag\\
&={\textstyle\sum\limits_{i=1}^{d}}
(\tfrac{S\pi}{L_i})^2\tfrac{L_i}{2S}
\bigtimes_{i\ne j=1}^d\tfrac{3}{8S} L_j={\textstyle\sum\limits_{i=1}^{d}}
(\tfrac{S\pi}{L_i})^2\tfrac{4}{3}
\bigtimes_{j=1}^d\tfrac{3}{8S} L_j
\notag\\
&=(\tfrac{3}{8S})^dL^\times \tfrac{4 S^2\pi^2}{3}{\textstyle\sum\limits_{i=1}^{d}}
\tfrac{1}{L_i^2}=S^{2-d}(\tfrac{3}{8})^{d}\tfrac{4\pi^2}{3}L^\times{\textstyle\sum\limits_{i=1}^{d}}
\tfrac{1}{L_i^2}, \notag
\end{align}
we obtain
\begin{align}
 \norm{\theta}{V}^2 &=\nu L^\times{\textstyle\sum\limits_{k=1}^{S_\sigma}}\theta_k^2S^{2-d}
 (\tfrac{3}{8})^{d}\tfrac{4\pi^2}{3}{\textstyle\sum\limits_{i=1}^{d}}
\tfrac{1}{L_i^2}
  +\norm{\theta}{H}^2
=\left(S^2\tfrac{4\nu\pi^2}{3}{\textstyle\sum\limits_{i=1}^{d}}
\tfrac{1}{L_i^2} +1\right)\norm{\theta}{H}^2,\notag
 \end{align}
That is,
\begin{align}
  \norm{\theta}{V}^2
=\left(C_1S^2 +1\right)\norm{\theta}{H}^2,\quad\mbox{with}\quad C_1\coloneqq
\tfrac{4\nu\pi^2}{3}{\textstyle\sum\limits_{i=1}^{d}}
\tfrac{1}{L_i^2}.\notag
 \end{align}

Finally, for the~$\rmD(A)$-norm we find 
\begin{align}
 \norm{\theta}{\rmD(A)}^2&=\norm{-\nu\Delta\theta+\theta}{H}^2
 =\nu^2\norm{\Delta\theta}{H}^2+2\nu\norm{\nabla\Phi_k}{L^2(\Omega)^d}^2+\norm{\theta}{H}^2\notag\\
 &=\nu^2\norm{\Delta\theta}{H}^2+2\norm{\theta}{V}^2-\norm{\theta}{H}^2
 =\nu^2\norm{\Delta\theta}{H}^2+\left(2C_1S^2+1\right)\norm{\theta}{H}^2\notag
 \end{align}
and from
\begin{align}
\norm{\Delta\Phi_k}{H}^2 &={\textstyle\sum\limits_{i=1}^{d}}
\norm{2(\tfrac{S\pi}{L_i})^2\cos(\tfrac{2S\pi x_i}{L_i})}{L^2((0,\frac{L_i}{S}),\bbR)}^2
\bigtimes_{i\ne j=1}^d
\norm{\sin^2(\tfrac{S\pi x_j}{L_j})}{L^2((0,\frac{L_j}{S}),\bbR)}^2\notag\\
&={\textstyle\sum\limits_{i=1}^{d}}
4(\tfrac{S\pi}{L_i})^4\tfrac{L_i}{2S}
\bigtimes_{i\ne j=1}^d\tfrac{3}{8S} L_j={\textstyle\sum\limits_{i=1}^{d}}
(\tfrac{S\pi}{L_i})^4\tfrac{16}{3}
\bigtimes_{j=1}^d\tfrac{3}{8S} L_j
\notag\\
&=(\tfrac{3}{8S})^dL^\times \tfrac{16 S^4\pi^4}{3}{\textstyle\sum\limits_{i=1}^{d}}
\tfrac{1}{L_i^2}=S^{4-d}(\tfrac{3}{8})^{d}\tfrac{16\pi^4}{3}L^\times{\textstyle\sum\limits_{i=1}^{d}}
\tfrac{1}{L_i^2}, \notag
\end{align}
we obtain
\begin{align}
 \norm{\Delta\theta}{H}^2 &={\textstyle\sum\limits_{k=1}^{S_\sigma}}\theta_k^2\norm{\Delta\Phi_k}{H}^2
 =S^{4-d}(\tfrac{3}{8})^{d}\tfrac{16\pi^4}{3}{\textstyle\sum\limits_{i=1}^{d}}
\tfrac{1}{L_i^2}L^\times{\textstyle\sum\limits_{k=1}^{S_\sigma}}\theta_k^2  
 =S^{4}\tfrac{16\pi^4}{3}{\textstyle\sum\limits_{i=1}^{d}}
\tfrac{1}{L_i^2}\norm{\theta}{H}^2,\notag
 \end{align}
hence
\begin{align}
  \norm{\theta}{\rmD(A)}^2
=\left(C_2S^4+ 2C_1S^2 +1\right)\norm{\theta}{H}^2,\quad\mbox{with}\quad
C_2\coloneqq \tfrac{\nu^216\pi^4}{3}{\textstyle\sum\limits_{i=1}^{d}}\notag
\tfrac{1}{L_i^2},
 \end{align}
which finishes the proof.
\qed


%
%

\bigskip\noindent
{\bf Aknowlegments.} The author is supported by ERC advanced grant 668998 (OCLOC) under the EU’s H2020 research program.
The author acknowledges partial support from
the Austrian Science
Fund (FWF): P 33432-NBL.

\end{document}